\documentclass[leqn,10pt]{amsart}
\usepackage[utf8]{inputenc}
\usepackage{a4}
\usepackage[safeinputenc=true,style=alphabetic,firstinits,maxnames=4,useprefix=true,backend=bibtex]{biblatex}

\bibliography{zdbrudnobib}
\usepackage{amsmath,amssymb,amscd,bbm}
\usepackage{enumerate}
\usepackage[mathscr]{eucal}
\usepackage[all]{xy}





\newtheorem{thm}{Theorem}[section]
\newtheorem{lemma}[thm]{Lemma}
\newtheorem{prop}[thm]{Proposition}
\newtheorem{cor}[thm]{Corollary}

\theoremstyle{definition}

\newtheorem{exa}[thm]{Example}

\theoremstyle{remark}

\numberwithin{equation}{section}

\newcommand{\vanish}[1]{\relax}       
\def\qedsymbol{\hbox to 1ex{\llap{\rule{0.25pt}{1ex}}\rlap{\rule{1ex}{0.25pt}}\lower0.25pt\rlap{\raise1ex\rlap{\rule{1ex}{0.25pt}}}\hskip1ex\llap{\rule{0.25pt}{1ex}}}}
\def\rlqed{\rlap{\rule{\hsize}{0pt}\kern-1ex\kern-1em\qed}} 


\newcounter{aufzi}
\newenvironment{aufzi}{\begin{list}{ {\upshape\alph{aufzi})}}{
        \usecounter{aufzi}
        \topsep1ex
        \parsep0cm
        \itemsep0.8ex
        \leftmargin1cm
        \labelwidth0.5cm
        \labelsep0.3cm
}}
{\end{list}}

\newcounter{aufzii}
\newenvironment{aufzii}{\begin{list}{\hfill {\upshape
(\roman{aufzii})}}{
        \usecounter{aufzii}
        \topsep1ex
        \parsep0cm
        \itemsep1ex
        \leftmargin1cm
        \labelwidth0.5cm
        \labelsep0.3cm
}}
{\end{list}}

\newcounter{aufziii}
\newenvironment{aufziii}{\begin{list}{ {\upshape\arabic{aufziii})}}{
        \usecounter{aufziii}
        \topsep1ex
        \parsep0cm
        \itemsep0.8ex
        \leftmargin1cm
        \labelwidth0.5cm
        \labelsep0.3cm
}}
{\end{list}}


\makeatletter

\makeatother

%
%

\newcommand{\bbE}{\mathbb{E}}

%
%

\newcommand{\calB}{\mathcal{B}}

\newcommand{\calF}{\mathcal{F}}

\newcommand{\calZ}{\mathcal{Z}}





\def\bfX{\mathbf{X}}



\def\uK{\mathrm{K}}

\def\uN{\mathrm{N}}


\def\ub{\mathrm{b}}

\def\ue{\mathrm{e}}
\def\uf{\mathrm{f}}

\def\uk{\mathrm{k}}
\def\ul{\mathrm{l}}
\def\um{\mathrm{m}}
\def\un{\mathrm{n}}

\def\up{\mathrm{p}}

\def\ur{\mathrm{r}}
\def\us{\mathrm{s}}
\def\ut{\mathrm{t}}

\def\uw{\mathrm{w}}




\renewcommand{\ue}{\mathrm{e}}    
\def\id{\mathop{\mathrm{id}}\nolimits}   

%
%

\newcommand{\R}{\mathbb{R}}     
\newcommand{\N}{\mathbb{N}}
\newcommand{\Z}{\mathbb{Z}}





%
%

\newcommand{\comp}[1]{#1^\mathrm{c}}    
\DeclareMathOperator{\card}{card}  
\newcommand{\sdif}{\triangle}      

\newcommand{\Bigcap}[2][\relax]{%
 \ifx#1\relax \bigcap_{#2}
 \else \bigcap^{#1}_{#2}
 \fi}
\newcommand{\Bigcup}[2][\relax]{%
 \ifx#1\relax \bigcup_{#2}
 \else \bigcup^{#1}_{#2}
 \fi}

%
%


%
%



%
%


%
%


%
%



%
%

\def\fact#1#2{#1/#2}
\def\tfact#1#2{#1/#2}

\def\fact#1#2{{\raise0.2em\hbox{$#1$}\kern-0.2em/\kern-0.1em\lower0.2em\hbox{$#2$}}}
\def\tfact#1#2{{\raise0.1em\hbox{\small$#1$}\kern-0.1em/\kern-0.1em\lower0.1em\hbox{\small$#2$}}}

%
%
\newcommand{\norm}[2][\relax]{
   \ifx#1\relax \ensuremath{\left\Vert#2\right\Vert}
   \else \ensuremath{\left\Vert#2\right\Vert_{#1}}
   \fi}

\newcommand{\Bnorm}[2][\relax]{
   \ifx#1\relax \ensuremath{\Bigl\Vert#2\Bigr\Vert}
   \else \ensuremath{\Bigl\Vert#2\Bigr\Vert_{#1}}
   \fi}




\makeatletter
\newcommand{\tdprod}[2]{\ensuremath{%
  \setbox0=\hbox{\ensuremath{\langle#1,#2 \rangle}}
  \dimen@\ht0
  \advance\dimen@ by \dp0 (#1\rule[-\dp0]{0pt}{\dimen@}\,|#2\hspace{1pt})}}
\newcommand{\dprod}[2]{\ensuremath{%
  \setbox0=\hbox{\ensuremath{\left\langle#1,#2\right\rangle}}
  \dimen@\ht0
  \advance\dimen@ by \dp0 \left\langle\left.#1\rule[-\dp0]{0pt}{\dimen@}\,\right|#2\hspace{1pt}\right\rangle}}

\newcommand{\bdprod}[2]{\ensuremath{%
  \setbox0=\hbox{\ensuremath{\bigl\langle#1,#2\bigr\rangle}}
  \dimen@\ht0
  \advance\dimen@ by \dp0 \bigl\langle#1\bigl|\rule[-\dp0]{0pt}{\dimen@}\bigr.#2\hspace{1pt}\bigr\rangle}}
\newcommand{\Bdprod}[2]{\ensuremath{%
  \setbox0=\hbox{\ensuremath{\Bigl\langle#1,#2\Bigr\rangle}}
  \dimen@\ht0
  \advance\dimen@ by \dp0 \Bigl\langle#1\Bigl|\rule[-\dp0]{0pt}{\dimen@}\Bigr.#2\hspace{1pt}\Bigr\rangle}}
\newcommand{\tsprod}[2]{\ensuremath{%
  \setbox0=\hbox{\ensuremath{(#1,#2)}}
  \dimen@\ht0
  \advance\dimen@ by \dp0 (#1\rule[-\dp0]{0pt}{\dimen@}\,|#2\hspace{1pt})}}
\newcommand{\sprod}[2]{\ensuremath{%
  \setbox0=\hbox{\ensuremath{\left(#1,#2\right)}}
  \dimen@\ht0
  \advance\dimen@ by \dp0 \left(\left.#1\rule[-\dp0]{0pt}{\dimen@}\,\right|#2\hspace{1pt}\right)}}
\newcommand{\bsprod}[2]{\ensuremath{%
  \setbox0=\hbox{\ensuremath{\bigl(#1,#2\bigr)}}
  \dimen@\ht0
  \advance\dimen@ by \dp0 \bigl(#1\bigl|\rule[-\dp0]{0pt}{\dimen@}\bigr.#2\hspace{1pt}\bigr)}}
\newcommand{\Bsprod}[2]{\ensuremath{%
  \setbox0=\hbox{\ensuremath{\Bigl(#1,#2\Bigr)}}
  \dimen@\ht0
  \advance\dimen@ by \dp0 \Bigl(#1\Bigl|\rule[-\dp0]{0pt}{\dimen@}\Bigr.#2\hspace{1pt}\Bigr)}}
\makeatother

%
%


\newcommand{\Ell}[2][\relax]{
   \ifx#1\relax \mathrm{L}^{\mathrm{#2}}
   \else \mathrm{L}^{\mathrm{#2}}_{\mathrm{#1}}
   \fi}
\renewcommand{\Ell}[2][\relax]{
   \ifx#1\relax \mathrm{L}^{\!#2}
   \else \mathrm{L}^{\!#2}_{\mathrm{#1}}
   \fi}

\newcommand{\Wee}[2][\relax]{
   \ifx#1\relax \mathrm{W}^{\mathrm{#2}}
   \else \mathrm{W}^{\mathrm{#2}}_{\mathrm{#1}}
   \fi}
\newcommand{\Har}[2][\relax]{
   \ifx#1\relax \mathsf{H}^{\mathsf{#2}}
   \else   \mathsf{H}^{\mathsf{#2}}_{\mathrm{#1}}
   \fi}
\newcommand{\eM}{\mathrm{M}}     

%
%

\newcommand{\Probm}{\eM^1}





\def\prX{\mathrm X}    




%
%


%
%



\def\rlqed{\rlap{\rule{\hsize}{0pt}\kern-1ex\kern-1em\qed}}

\makeatletter

\def\maketag@@@@@#1{\llap{\hbox to\hsize{\m@th\normalfont#1}}%
\gdef\tagform@##1{\maketag@@@{(\ignorespaces##1\unskip\@@italiccorr)}}}

\def\eqtext#1{\gdef\tagform@##1{\maketag@@@@@{\ignorespaces##1\unskip\@@italiccorr\hfill}}\tag{#1}}%
\def\reqtext#1{\gdef\tagform@##1{\maketag@@@@@{\hfill\ignorespaces##1\unskip\@@italiccorr}}\tag{#1}}%
\def\leqtext#1{\gdef\tagform@##1{\maketag@@@@@{\ignorespaces##1\unskip\@@italiccorr}}\tag{#1}}%
%

\makeatother



\newcommand{\inv}{\mathrm{inv}}

\newcommand{\UT}[2]{\mathsf{UT}_{#1}(#2)}

\newcommand{\cntm}[1]{\left|#1\right|}

\newcommand{\kintl}[2]{\mathrm{int}_{#1}^{\mathrm l}(#2)}
\newcommand{\kbdrl}[2]{\partial_{#1}^{\mathrm l}(#2)}
\newcommand{\kintr}[2]{\mathrm{int}_{#1}^{\mathrm r}(#2)}
\newcommand{\kbdrr}[2]{\partial_{#1}^{\mathrm r}(#2)}

\newcommand{\avg}[1]{\bbE_{#1}}
\newcommand{\indi}[1]{\mathbf{1}_{#1}}

\newcommand{\kcc}[2]{\uK_{#1}^{0}(#2)}
\newcommand{\kcca}[2]{\overline{\uK}_{#1}^0(#2)}

\newcommand{\kc}[2]{\uK(#1,#2)}
\newcommand{\kca}[2]{\overline{\uK}(#1,#2)}
\newcommand{\kcl}[1]{\widehat{\uK}(#1)}

\newcommand{\aast}{A^{\ast}}
\newcommand{\aex}{A^!}
\newcommand{\adag}{A^{\dag}}

\newcommand{\del}{\mathrm{01}}
\newcommand{\ddel}{\mathrm{0110}}

\newcommand{\cin}[1]{\mathrm{I}(#1)}

\newcommand{\sym}[1]{\mathrm{Sym}_{#1}}
\date{\today}

\begin{document}

\title[Brudno Theorem]{Computable F{\o}lner monotilings \\ and a theorem of Brudno II.}

\author[Nikita Moriakov]{Nikita Moriakov}
\address{Delft Institute of Applied Mathematics, Delft University of Technology,
P.O. Box 5031, 2600 GA Delft, The Netherlands}

\email{n.moriakov@tudelft.nl}

\subjclass{Primary  37B10, 37B40, 03D15}
\renewcommand{\subjclassname}{\textup{2000} Mathematics Subject
    Classification}

\thanks{The author kindly acknowledges the support from ESA CICAT of TU Delft}
\date{\today}

\begin{abstract}
A theorem of A.A. Brudno \cite{brudno1982} says that the
Kolmogorov-Sinai entropy of a subshift $\bfX$ over $\N$ with respect
to an ergodic measure $\mu$ equals the asymptotic Kolmogorov
complexity of almost every word $\omega$ in $\bfX$. The purpose of
this article is to extend this result to subshifts over computable
groups that admit computable regular symmetric F{\o}lner monotilings, which we introduce in this work. These monotilings are a special type of computable F{\o}lner monotilings, which we defined earlier in \cite{moriakov1} in order to extend the initial results of Brudno \cite{brudno1974}. For every $d \in \N$, the groups $\Z^d$ and $\UT{d+1}{\Z}$ admit particularly nice computable regular symmetric F{\o}lner monotilings for which we can provide the required computing algorithms `explicitly'. 
\end{abstract}

\maketitle

\section{Introduction}
It was proved by A.A. Brudno in \cite{brudno1982} that the Kolmogorov-Sinai entropy of a $\N$-dynamical system equals a.e. the Kolmorogov complexity of its orbits. An important special case is the case of subshifts over $\N$. It says that if $\bfX=(\prX,\mu,\N)$ is an ergodic subshift over $\N$, then for $\mu$-a.e. $\omega \in \prX$ we have
\begin{equation*}
h(\bfX) = \limsup\limits_{n \to \infty} \frac{\uK(\omega|_{[1,\dots,n]})}{n},
\end{equation*}
where $h(\bfX)$ is the Kolmogorov-Sinai entropy of $\bfX$ and
$\uK(\omega|_{[1,\dots,n]})$ is the \emph{Kolmogorov complexity} of
the word $\omega|_{[1,\dots,n]}$ of length $n$. Roughly speaking,
$\uK(\omega|_{[1,\dots,n]})$ is the length of the shortest description
of $\omega|_{[1,\dots,n]}$ for an `optimal decompressor' that takes
finite binary words as the input and produces finite words as the
output. A similar result can be easily proven for ergodic subshifts
over the group $\Z$ of integers, but the question remains if one can
generalize this theorem beyond the $\Z$ and $\Z^d$ cases.

The purpose of this article is to extend this result of Brudno to
subshifts over computable groups that posses computable regular
symmetric F{\o}lner monotilings (see Section \ref{ss.compfoln} for the
definition). The class of all such groups includes, for every $d \in
\N$, the groups $\Z^d$ and, for every $d \geq 2$, the groups of
unipotent upper-triangular matrices $\UT{d}{\Z}$ with integer entries. Thus this gives a nontrivial abstract generalization of the result of S.G. Simpson in \cite{simpson2015} for $\Z^d$ case.

The article is structured as follows. We devote Section
\ref{ss.pamengroup} to the general preliminaries on amenable groups
and entropy theory. Regular F{\o}lner monotilings, which are a special
type of \emph{F{\o}lner monotilings} from the work \cite{weiss2001} of
B. Weiss, are introduced in Section \ref{ss.regfoln}. We provide some
basic notions from the theory of computability and Kolmogorov
complexity in Section \ref{ss.compcompl}, and in Section
\ref{ss.compspaces} we define computable spaces, word presheaves and
(asymptotic) Kolmogorov complexity of sections of these
presheaves. Section \ref{ss.compgroup}, based on the work
\cite{rabin1960}, contains the definition of a computable group and
some basic examples. We proceed by introducing computable F{\o}lner
monotilings in Section \ref{ss.compfoln} and explaining why the groups
$\Z^d$ and $\UT{d}{\Z}$ do admit computable regular symmetric
F{\o}lner monotilings. The main result of this article (Theorem \ref{thm.brudno}) is proved
in Section \ref{s.brudno}.

The article has some overlap with our previous article \cite{moriakov1}, where the original results \cite{brudno1974} of A.A. Brudno were extended. For instance, we use the notions of a computable space, a word presheaf and Kolmogorov complexity of sections of word presheaves. We provide this general setup in this draft too and do not use any results specific to our previous work. But there are significant distinctions as well. First of all, in this work we prove the equality of the Kolmogorov-Sinai entropy of a subshift $\bfX=(\prX,\mu,\Gamma)$ to the asymptotic Kolmogorov complexity of a word $\omega$ in $\prX$ for $\mu$-a.e. $\omega$. In the previous article we proved the equality of the topological entropy and the asymptotic Kolmogorov complexity of the word presheaf associated to $\bfX$. Techniques and ideas in the proofs of these theorems are very different, in particular, the algorithms that we use to encode words in this article are not related to the algorithms that we have given earlier. Secondly, the central results of this article (Theorem \ref{thm.brudno}) is proved under the assumption that the group $\Gamma$ admits a computable regular symmetric F{\o}lner monotiling, while in the previous work we imposed a weaker requirement that the group $\Gamma$ admits a computable F{\o}lner monotiling. Finally, in this article we rely on rather sophisticated tools from ergodic theory and entropy theory of amenable group actions, namely the work \cite{lindenstrauss2001} of E. Lindenstrauss and the work \cite{zorin2014} of P. Zorin-Kranich, which were not needed before. 

I would like to thank Alexander Shen
for reading the preprint and providing the feedback. I would like to thank Vitaly Bergelson, Eli Glasner and Benjamin Weiss
for the discussions on the topic. I would also like to thank my advisor Markus Haase for supporting this work.

\section{Preliminaries}

\subsection{Amenable groups and ergodic theory}
\label{ss.pamengroup}

In this section we will remind the reader of the classical notion of amenability, and state some results from ergodic theory of amenable group actions. We stress that all the groups that we consider are discrete and countably infinite. In what follows we shall rely mostly on \cite{zorin2014} and \cite{lindenstrauss2001}.

Let $\Gamma$ be a group with the counting measure $\cntm{\cdot}$. A sequence of finite sets $(F_n)_{n \geq 1}$ is called
\begin{aufziii}
\item a \textbf{left (right) weak F{\o}lner sequence} if for every finite set $K \subseteq \Gamma$ one has
\begin{equation*}
    \frac{\cntm{F_n \sdif K F_n}}{\cntm{F_n}} \to 0 \ \ \left( \text{resp. } \frac{\cntm{F_n \sdif F_n K}}{\cntm{F_n}} \to 0 \right);
\end{equation*}
\item a \textbf{left (right) strong F{\o}lner sequence} if for every finite set $K \subseteq \Gamma$ one has
\begin{equation*}
    \frac{\cntm{\kbdrl{K}{F_n}}}{\cntm{F_n}} \to 0 \ \ \left( \text{resp. }  \frac{\cntm{\kbdrr{K}{F_n}}}{\cntm{F_n}} \to 0 \right),
\end{equation*}
where
\begin{equation*}
\kbdrl{K}{F}:=K^{-1}F \cap K^{-1} \comp{F} \ \ \left( \text{resp. }  \kbdrr{K}{F}:=F K^{-1} \cap \comp{F} K^{-1} \right)
\end{equation*}
 is the \textbf{left (right) $K$-boundary} of $F$;

\item a \textbf{(C-)tempered sequence} if there is a constant $C$ such that for every $j$ one has $$\cntm{\bigcup\limits_{i<j} F_i^{-1} F_j} < C \cntm{F_j}.$$
\end{aufziii}

One can show a sequence of sets $(F_n)_{n \geq 1}$ is a weak left F{\o}lner sequence if and only if it is a strong left F{\o}lner sequence (see \cite{ceccherini2010}, Section 5.4), hence we will simply call it a left F{\o}lner sequence. The same holds for right F{\o}lner sequences. If we call a sequence of sets a `\textbf{F{\o}lner sequence}' without saying if it is `left' or `right', we always mean a left F{\o}lner sequence. A sequence of sets $(F_n)_{n \geq 1}$ which is simultaneously a left and a right F{\o}lner sequence is called a \textbf{two-sided F{\o}lner sequence}. A group $\Gamma$ is called \textbf{amenable} if it admits a left F{\o}lner sequence. It can be shown that every amenable group admits a two-sided F{\o}lner sequence. Since $\Gamma$ is infinite, for every F{\o}lner sequence $(F_n)_{n \geq 1}$ we have $\cntm{F_n} \to \infty$ as $n \to \infty$.

For finite sets $F, K \subseteq \Gamma$ the sets
\begin{equation*}
\kintl{K}{F}:=F \setminus \kbdrl{K}{F} \ \ \left( \text{resp. } \kintr{K}{F}:=F \setminus \kbdrr{K}{F}  \right)
\end{equation*}
are called the \textbf{left (right) $K$-interior} of $F$ respectively. It is clear that if a sequence of finite sets $(F_n)_{n \geq 1}$ is a left (right) F{\o}lner sequence, then for every finite $K \subseteq \Gamma$ one has \begin{equation*}
\cntm{\kintl{K}{F_n}} / \cntm{F_n} \to 1 \ \ \left( \text{resp. }  \cntm{\kintr{K}{F_n}}/\cntm{F_n} \to 1 \right)
\end{equation*}
as $n \to \infty$.

One of the reasons why F{\o}lner sequences are of interest in this
work is that they are `good' for averaging group actions. In what
follows all group actions are left actions. We denote the averages by
$\avg{g \in F}:=\frac{1}{\cntm{F}}\sum\limits_{g \in F}.$ The
following important theorem was proved by E. Lindenstrauss in \cite{lindenstrauss2001}.
\begin{thm}
Let $\bfX=(\prX,\mu,\Gamma)$ be a measure-preserving dynamical system, where the group $\Gamma$ is amenable and $(F_n)_{n \geq 1}$ is a tempered left F{\o}lner sequence. Then for every $f \in \Ell{1}{(\prX)}$ there is a $\Gamma$-invariant $\overline f \in \Ell{1}{(\prX)}$ such that
\begin{equation*}
\lim\limits_{n \to \infty} \avg{g \in F_n} f(g \omega) = \overline f(\omega)
\end{equation*}
for $\mu$-a.e. $\omega \in \prX$. If the system $\bfX$ is ergodic, then
\begin{equation*}
\lim\limits_{n \to \infty} \avg{g \in F_n} f(g \omega) = \int f d\mu
\end{equation*}
for $\mu$-a.e. $\omega \in \prX$.
\end{thm}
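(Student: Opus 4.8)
The plan is to follow E.~Lindenstrauss \cite{lindenstrauss2001}. The proof rests on three ingredients: an amenable mean ergodic theorem that identifies the limit, a weak-type $(1,1)$ maximal inequality for the ergodic averages along $(F_n)_{n\geq 1}$, and the Banach principle, which upgrades $\mu$-a.e.\ convergence from a dense subclass of $\Ell{1}{(\prX)}$ to all of $\Ell{1}{(\prX)}$. Write $\calI$ for the $\sigma$-algebra of $\Gamma$-invariant measurable sets and put $\overline f:=\CEx{f}{\calI}$. First I would record the mean ergodic theorem: for any left F{\o}lner sequence and any $f\in\Ell{2}{(\prX)}$, the functions $\omega\mapsto\avg{g\in F_n}f(g\omega)$ converge in $\Ell{2}{}$-norm to $\overline f$. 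This follows from the orthogonal decomposition of $\Ell{2}{(\prX)}$ into the $\Gamma$-invariant vectors and the closed linear span of the coboundaries $\omega\mapsto h(\omega)-h(g\omega)$, with $h\in\Ell{2}{(\prX)}$ and $g\in\Gamma$, together with the estimate $\abs{\avg{g'\in F_n}\bigl(h(g'\omega)-h(gg'\omega)\bigr)}\leq\norm[\infty]{h}\cdot\cntm{F_n\sdif gF_n}/\cntm{F_n}\to 0$, valid for bounded $h$ by the F{\o}lner property; it extends to $\Ell{1}{}$-convergence by density and the fact that the averages are contractions on $\Ell{1}{(\prX)}$. In particular, once the almost-everywhere limit is known to exist, it must agree $\mu$-a.e.\ with $\overline f$.

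The heart of the matter is the maximal inequality: there is a constant $C'$, depending only on the tempering constant $C$ of $(F_n)_{n\geq 1}$, such that the maximal function $f^{\ast}(\omega):=\sup_{n\geq 1}\avg{g\in F_n}\abs{f}(g\omega)$ obeys $\mu\bigl(\{f^{\ast}>\lambda\}\bigr)\leq(C'/\lambda)\,\norm[1]{f}$ for all $\lambda>0$ and all $f\in\Ell{1}{(\prX)}$. I would prove this by a Calder\'on-type transference: truncate the supremum to indices $n\leq N$, fix a large F{\o}lner set $E\subseteq\Gamma$, transport $\abs{f}$ to the finitely supported function $\gamma\mapsto\abs{f}(\gamma\omega)$ on $E$, apply the analogous combinatorial maximal inequality on $\Gamma$ equipped with the counting measure $\cntm{\cdot}$, integrate in $\omega$ using the $\Gamma$-invariance of $\mu$, divide by $\cntm{E}$, and let $E$ exhaust $\Gamma$ along the F{\o}lner sequence, the constant being independent of $N$ so that $N\to\infty$ is harmless. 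The combinatorial maximal inequality reduces, in turn, to a Vitali-type covering lemma for the family of right translates $\{F_n\gamma:\gamma\in\Gamma,\ n\geq 1\}$: from any finite subfamily one can extract a pairwise disjoint subfamily whose union has counting measure at least a fixed fraction of the union of the whole subfamily. This covering lemma --- not anything in ergodic theory proper --- is the main obstacle, and it is exactly where temperedness is used. For a general F{\o}lner sequence the naive greedy procedure (repeatedly pick a tile of maximal cardinality, discard all tiles that meet it, repeat) need not capture a definite proportion of the union, because a small tile can be blocked by a slightly larger tile supported far away; Lindenstrauss's device is to select the tiles in a uniformly random order and to bound from below the expected counting measure of the union of the selected tiles. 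Temperedness enters through the identity that $F_i\gamma_1$ and $F_j\gamma_2$ meet if and only if $\gamma_1\gamma_2^{-1}\in F_i^{-1}F_j$: the translation parameters of the tiles of index $i<j$ that can meet a given tile $F_j\gamma_2$ of index $j$ therefore lie in a set of counting measure at most $\cntm{\bigcup_{i<j}F_i^{-1}F_j}<C\cntm{F_j}$, and this multiplicity bound is what keeps the expected covered fraction bounded below in terms of $C$ alone.

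It remains to assemble the pieces. Consider the functions $f$ that are either bounded and $\calI$-measurable --- for which the averages $\avg{g\in F_n}f(g\omega)$ are constant in $n$ and equal to $\overline f(\omega)$ --- or of the form $f(\omega)=h(\omega)-h(g\omega)$ with $h\in\Ell{\infty}{(\prX)}$ and $g\in\Gamma$ --- for which, by the estimate recorded above, the averages tend to $0$ uniformly in $\omega$. In either case the averages converge $\mu$-a.e.\ as $n\to\infty$, and the linear span of all such $f$ is dense in $\Ell{2}{(\prX)}$, hence in $\Ell{1}{(\prX)}$. Consequently the maximal inequality together with the Banach principle yields $\mu$-a.e.\ convergence of $\avg{g\in F_n}f(g\omega)$ for every $f\in\Ell{1}{(\prX)}$, and by the mean ergodic theorem the limit equals $\overline f\in\Ell{1}{(\prX)}$, which is $\Gamma$-invariant by construction. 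Finally, if $\bfX$ is ergodic then $\calI$ is $\mu$-trivial, so $\overline f=\CEx{f}{\calI}=\int f\,d\mu$ $\mu$-a.e., which is the second assertion.
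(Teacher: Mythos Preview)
The paper does not prove this theorem: it is stated as a quotation of Lindenstrauss's result and the reader is referred to \cite{lindenstrauss2001} for the proof. Your proposal is a reasonable high-level outline of exactly that argument --- mean ergodic theorem to pin down the limit, transference of a weak-type $(1,1)$ maximal inequality from the group with counting measure to the dynamical system, a randomized covering lemma in which temperedness bounds the multiplicity via $\cntm{\bigcup_{i<j}F_i^{-1}F_j}<C\cntm{F_j}$, and the Banach principle to pass from a dense class to all of $\Ell{1}{}$. One small inaccuracy: Lindenstrauss's random selection is not ``pick tiles in a uniformly random order''; rather, one processes the scales from largest to smallest and, at each scale, selects the remaining centers independently with a carefully tuned probability, bounding from below the expected measure of the union of the selected tiles. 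The spirit of your description is right, but if you intend to actually carry out the covering-lemma step you will need that more careful scheme.
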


We will need a weighted variant of this result. A function $c$ on $\Gamma$ is called a \textbf{good weight} for pointwise convergence of ergodic averages along a tempered left F{\o}lner sequence $(F_n)_{n \geq 1}$ in $\Gamma$ if for every measure-preserving system $\bfX=(\prX,\mu,\Gamma)$ and every $f \in \Ell{\infty}{(\prX)}$ the averages
\begin{equation*}
\avg{g \in F_n} c(g) f(g \omega)
\end{equation*}
converge as $n \to \infty$ for $\mu$-a.e. $\omega \in \prX$.

We will use a special case of the Theorem 1.3 from \cite{zorin2014}.
\begin{thm}
\label{thm.retthm}
Let $\Gamma$ be a group with a tempered F{\o}lner sequence $(F_n)_{n \geq 1}$. Then for every ergodic measure-preserving system $\bfX=(\prX,\mu,\Gamma)$ and every $f \in \Ell{\infty}{(\prX)}$ there exists a full measure subset $\widetilde{X} \subseteq X$ such that for every $x \in \widetilde X$ the map $g \mapsto f(g x)$ is a good weight for the pointwise ergodic theorem along $(F_n)_{n \geq 1}$.
\end{thm}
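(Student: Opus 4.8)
The plan is to obtain Theorem \ref{thm.retthm} as a direct specialization of the return times theorem for amenable groups of Zorin-Kranich, namely \cite[Theorem 1.3]{zorin2014}, which is already established at exactly the level of generality needed here — a countably infinite amenable group equipped with a tempered F{\o}lner sequence — so that the only work is to match hypotheses and unwind definitions. I would proceed in three steps. First, recall the content of \cite[Theorem 1.3]{zorin2014}: for a countable amenable $\Gamma$ with tempered F{\o}lner sequence $(F_n)_{n\geq1}$, an ergodic measure-preserving system $(\prY,\muY,\Gamma)$ and a bounded (more generally, $\Ell{p}{}$) function $h$ on $\prY$, there is a full measure subset $\widetilde Y\subseteq Y$ such that for \emph{every} measure-preserving system $(\prX,\mu,\Gamma)$ and every $f\in\Ell{\infty}{(\prX)}$ the weighted averages $\avg{g\in F_n} h(gy)\, f(g\omega)$ converge for $\mu$-a.e.\ $\omega\in\prX$, for each $y\in\widetilde Y$. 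Read for a fixed $y\in\widetilde Y$, this says precisely that the function $g\mapsto h(gy)$ is a good weight for pointwise convergence of ergodic averages along $(F_n)_{n\geq1}$ in the sense defined above.

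Second, I would check the hypotheses in our situation: $\Gamma$ is amenable because it carries the F{\o}lner sequence $(F_n)_{n\geq1}$; that sequence is tempered by assumption; and $\bfX=(\prX,\mu,\Gamma)$ is ergodic with $f\in\Ell{\infty}{(\prX)}$, again by assumption. Applying \cite[Theorem 1.3]{zorin2014} with $(\prY,\muY,\Gamma):=\bfX$ and $h:=f$ then produces a full measure subset $\widetilde X\subseteq X$, and for each $x\in\widetilde X$ the function $c:=\bigl(g\mapsto f(gx)\bigr)$ — which is bounded, with $\|c\|_\infty\leq\|f\|_{\Ell{\infty}{(\prX)}}$ — has the property that $\avg{g\in F_n} c(g)\, f'(g\omega)$ converges $\mu'$-a.e.\ for every measure-preserving system $(\prX',\mu',\Gamma)$ and every $f'\in\Ell{\infty}{(\prX')}$. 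This is literally the definition of a good weight, so $g\mapsto f(gx)$ is a good weight along $(F_n)_{n\geq1}$, which is the assertion of the theorem.

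Third — and this is where the only genuine care is needed — I would attend to the bookkeeping: verifying that the notion of ``good weight'' used in \cite{zorin2014} coincides with, or at least implies, the one defined in this section, in particular with respect to the function class on which convergence is tested (here $\Ell{\infty}{}$, the weakest reasonable choice, which is harmless since the weight $c$ is bounded) and the mode of convergence ($\mu$-a.e.); and, should \cite[Theorem 1.3]{zorin2014} carry any auxiliary regularity requirement on the F{\o}lner sequence beyond temperedness, confirming that a tempered F{\o}lner sequence already satisfies it — or that one may pass to a cofinal reindexing that does without destroying temperedness or the F{\o}lner property. I expect this matching to be routine and to constitute essentially the whole obstacle: no new dynamical input is required beyond what is imported wholesale from \cite{zorin2014}, whose proof (which combines a transference argument based on Lindenstrauss's covering lemma with a Wiener--Wintner-type estimate) we do not reproduce here.
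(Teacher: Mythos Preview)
Your proposal is correct and matches the paper's approach exactly: the paper does not prove this theorem but simply states it as a special case of \cite[Theorem~1.3]{zorin2014}, which is precisely what you do. The only difference is that you spell out the matching of hypotheses and definitions, which the paper leaves implicit.
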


We will now briefly remind the reader of the notion of Kolmogorov-Sinai entropy for amenable group actions. Let $\alpha=\{ A_1,\dots,A_n\}$ be a finite measurable partition of a probability space $(\prX,\calB, \mu)$. The function $\omega \mapsto \alpha(\omega)$, mapping a point $\omega \in \prX$ to the atom of the partition $\alpha$ containing $\omega$, is defined almost everywhere. The \textbf{information function} of $\alpha$ is defined as
\begin{equation*}
I_{\alpha}(\omega):=-\sum\limits_{i=1}^n \indi{A_i}(\omega) \log \mu(A_i)=-\log(\mu(\alpha(\omega))).
\end{equation*}
Then $I_{\alpha} \in \Ell{\infty}{(\prX)}$. The \textbf{Shannon entropy of a partition} $\alpha$ is defined by
\begin{equation*}
h_{\mu}(\alpha):=-\sum\limits_{i=1}^n \mu(A_i) \log(\mu(A_i)) = \int I_{\alpha} d\mu.
\end{equation*}
Entropy of a partition is always a nonnegative real number. If $\alpha,\beta$ are two finite measurable partitions of $\prX$, then
\begin{equation*}
\alpha \vee \beta:=\{ A \cap B: A \in \alpha, B \in \beta \}
\end{equation*}
is a finite measurable partition of $\prX$ as well. Given a measure-preserving dynamical system $\bfX=(\prX,\mu,\Gamma)$, where the discrete amenable group $\Gamma$ acts on $\prX$, we can also define (dynamical) entropy of a partition. First, for every element $g \in \Gamma$ and every finite measurable partition $\alpha$ we define a finite measurable partition $g^{-1} \alpha$ by
\begin{equation*}
g^{-1} \alpha = \{ g^{-1} A: A \in \alpha\}.
\end{equation*}
Next, for every finite subset $F \subseteq \Gamma$  and every partition $\alpha$ we define the partition
\begin{equation*}
\alpha^F:=\bigvee\limits_{g \in F} g^{-1} \alpha.
\end{equation*}
Let $(F_n)_{n \geq 1}$ be a F{\o}lner sequence in $\Gamma$ and $\alpha$ be a finite measurable partition of $\prX$. Then the limit
\begin{equation*}
h_{\mu}(\alpha,\Gamma):=\lim\limits_{n \to \infty} \frac{h_{\mu}(\alpha^{F_n})}{|F_n|}
\end{equation*}
exists, it is a nonnegative real number independent of the choice of a
F{\o}lner sequence due to the lemma of D.S. Ornstein and B. Weiss (see \cite{gromov1999},\cite{krieger2007}). The limit $h_{\mu}(\alpha,\Gamma)$ is called \textbf{dynamical entropy of $\alpha$}. We define the \textbf{Kolmogorov-Sinai entropy} of a measure-preserving system $\bfX=(\prX,\mu, \Gamma)$ by
\begin{equation*}
h(\bfX):=\sup\{ h_{\mu}(\alpha,\Gamma): \alpha \text{ a finite partition of } \prX\}.
\end{equation*}

We will need the Shannon-McMillan-Breiman theorem for amenable group actions. For the proof see \cite{lindenstrauss2001}.
\begin{thm}
\label{thm.smb}
Let $\bfX=(\prX,\mu,\Gamma)$ be an ergodic measure-preserving system
and $\alpha$ be a finite partition of $\prX$. Assume that $(F_n)_{n \geq 1}$ is a tempered F{\o}lner sequence in $\Gamma$ such that $\frac{|F_n|}{\log n} \to \infty$ as $n \to \infty$. Then there is a constant $h_{\mu}'(\alpha,\Gamma)$ s.t.
\begin{equation}
\label{eq.smb1}
\frac{I_{\alpha^{F_n}}(\omega)}{|F_n|} \to h_{\mu}'(\alpha,\Gamma)
\end{equation}
as $n \to \infty$ for $\mu$-a.e. $\omega \in \prX$ and in $\Ell{1}{(\prX)}$.
\end{thm}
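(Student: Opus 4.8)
The plan is to prove \eqref{eq.smb1} with $h_\mu'(\alpha,\Gamma)$ equal to the Kolmogorov--Sinai quantity $h := h_\mu(\alpha,\Gamma)$, along the lines of Lindenstrauss; the statement only asks for existence of the limit, but identifying it costs nothing. The starting point is free: by the Ornstein--Weiss lemma quoted above, $\frac{1}{|F_n|}\int I_{\alpha^{F_n}}\,d\mu = \frac{1}{|F_n|}h_\mu(\alpha^{F_n}) \to h$, so the whole task is to control the \emph{functions} $I_{\alpha^{F_n}}$ pointwise rather than just in mean. I would do this by establishing an a.e.\ upper bound $\limsup_n \frac{1}{|F_n|}I_{\alpha^{F_n}} \le h$ and a matching a.e.\ lower bound $\liminf_n \frac{1}{|F_n|}I_{\alpha^{F_n}} \ge h$, which must be argued more or less separately.

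For the upper bound, fix $\varepsilon > 0$. Using the Ornstein--Weiss $\varepsilon$-quasi-tiling theorem I would pick finite F{\o}lner sets $K_1,\dots,K_s \subseteq \Gamma$ with $\max_i \frac{1}{|K_i|}h_\mu(\alpha^{K_i}) < h+\varepsilon$ (possible by the Ornstein--Weiss lemma, which makes $\frac{1}{|K|}h_\mu(\alpha^K)$ close to $h$ once $K$ is sufficiently left-invariant) such that every sufficiently left-invariant finite set --- in particular $F_n$ for $n$ large --- is exhausted, apart from an exceptional part $E_n$ with $|E_n| < \varepsilon|F_n|$, by pairwise disjoint left-translates $K_i c$ (the \emph{tiles}). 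Since $\alpha^{K_i c} = c^{-1}\alpha^{K_i}$ and hence $I_{\alpha^{K_i c}}(\omega) = I_{\alpha^{K_i}}(c\omega)$, the identity $\alpha^{F_n} = \bigl(\bigvee_{K_i c \text{ a tile}}\alpha^{K_i c}\bigr)\vee\alpha^{E_n}$, subadditivity of the information function, and the crude bound $I_{\alpha^{E_n}} \le \sum_{g\in E_n} I_\alpha(g\omega) \le |E_n|\,\|I_\alpha\|_\infty$ give
\begin{equation*}
\frac{1}{|F_n|} I_{\alpha^{F_n}}(\omega) \;\le\; \sum_{i=1}^{s}\,\frac{1}{|F_n|}\sum_{c\,:\,K_i c \text{ a tile}} I_{\alpha^{K_i}}(c\omega) \;+\; \varepsilon\,\|I_\alpha\|_\infty .
\end{equation*}
There are $\approx |F_n|/|K_i|$ tiles of shape $K_i$, so each inner term ought to tend to $\frac{1}{|K_i|}\int I_{\alpha^{K_i}}\,d\mu = \frac{1}{|K_i|}h_\mu(\alpha^{K_i}) < h+\varepsilon$ (times the fraction of $F_n$ covered by $K_i$-tiles), by the pointwise ergodic theorem for tempered F{\o}lner sequences quoted above. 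This last passage is the main obstacle: the tile-centres $c$ do \emph{not} form a F{\o}lner sequence, so the ergodic theorem does not apply to them directly. I would get around it either by constructing the quasi-tilings measurably and equivariantly, so that the centre-average is dominated by a genuine $F_n$-ergodic average of a fixed bounded function, or --- more robustly --- by invoking Lindenstrauss's maximal ergodic inequality for tempered F{\o}lner sequences (the device that makes the temperedness hypothesis usable) to bound $\sup_n$ of the centre-averages. Either route yields $\limsup_n \frac{1}{|F_n|}I_{\alpha^{F_n}} \le h + \varepsilon\bigl(1+\|I_\alpha\|_\infty\bigr)$ a.e., and letting $\varepsilon\downarrow 0$ along a countable sequence gives the upper bound.

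For the matching lower bound --- the harder half --- I would argue by contradiction. If $B := \{\omega : \liminf_n \frac{1}{|F_n|}I_{\alpha^{F_n}}(\omega) < h-\varepsilon\}$ had positive measure, then for each $\omega\in B$ there would be arbitrarily large $n$ with $I_{\alpha^{F_n}}(\omega) < (h-\varepsilon)|F_n|$, that is, $\omega$ sits in an ``oversized'' atom of $\alpha^{F_n}$, and there are fewer than $e^{(h-\varepsilon)|F_n|}$ oversized atoms at level $n$. A Vitali/Ornstein--Weiss packing argument inside a highly left-invariant $F_N$, combined with the pointwise ergodic theorem, would then let me select disjoint left-translates of oversized atoms of various levels covering a fixed fraction of $\prX$, exhibiting a partition refining $\alpha^{F_N}$ into at most $e^{(h-\varepsilon/2)|F_N|}$ cells up to arbitrarily small error; this forces $\frac{1}{|F_N|}h_\mu(\alpha^{F_N}) \le h-\varepsilon/3$ for $N$ large, contradicting $\frac{1}{|F_N|}h_\mu(\alpha^{F_N}) \to h$.

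Combining the two bounds gives $\frac{1}{|F_n|}I_{\alpha^{F_n}} \to h$ $\mu$-a.e.\ (the limit is already the constant $h$; ergodicity has entered only through the ergodic theorem used in the two halves, and through the packing step). The $\Ell{1}$-convergence then follows from the a.e.\ convergence together with the uniform integrability of $\bigl(\frac{1}{|F_n|}I_{\alpha^{F_n}}\bigr)_n$, once more supplied by the maximal ergodic inequality. Finally, the standing assumption $|F_n|/\log n \to \infty$ would enter both a.e.\ estimates through a Borel--Cantelli step: the ``bad at level $n$'' probabilities decay like $e^{-c|F_n|}$, which is summable in $n$ precisely once $|F_n|$ dominates $\tfrac{2}{c}\log n$, so that the level-wise deviation bounds upgrade to almost sure --- rather than merely subsequential --- convergence. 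The complete argument is in \cite{lindenstrauss2001}.
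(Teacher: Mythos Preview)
The paper does not prove this theorem at all but simply defers to \cite{lindenstrauss2001}, and your proposal is a (reasonable) sketch of precisely that argument, likewise closing with a reference to \cite{lindenstrauss2001}. So the approaches coincide.
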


Integrating both sides of the Equation \ref{eq.smb1} with respect to $\mu$, we deduce that
\begin{equation*}
\frac{h_{\mu}(\alpha^{F_n})}{|F_n|} \to h_{\mu}(\alpha,\Gamma) = h_{\mu}'(\alpha,\Gamma).
\end{equation*}
as $n \to \infty$. The Shannon-McMillan-Breiman theorem has the
following important corollary that will be used in the proof of
Theorem \ref{thm.brudnogeq} (\cite{glasner}, Corollary 14.36).
\begin{cor}
\label{cor.smb}
Let $\bfX = (\prX,\mu,\Gamma)$ be an ergodic measure-preserving system, $(F_n)_{n \geq 1}$
be a tempered F{\o}lner sequence in $\Gamma$ such that
$\frac{|F_n|}{\log n} \to \infty$ as $n \to \infty$. Let $\alpha$ is a
finite partition, then, given $\varepsilon>0$ and $\delta>0$, there
exists $n_0$ s.t. the following assertions hold:
\begin{aufzi}
\item For all $n \geq n_0$
\begin{equation*}
2^{-\cntm{F_n}(h_{\mu}(\alpha,\Gamma)+\varepsilon)} \leq \mu(A) \leq 2^{-\cntm{F_n}(h_{\mu}(\alpha,\Gamma)-\varepsilon)}
\end{equation*}
for all atoms $A \in \alpha^{F_n}$ with the exception of a set of atoms whose total measure is less than $\delta$.
\item For all $n \geq n_0$
\begin{equation*}
2^{-\cntm{F_n}(h_{\mu}(\alpha,\Gamma)+\varepsilon)} \leq \mu(\alpha^{F_n}(\omega)) \leq 2^{-\cntm{F_n}(h_{\mu}(\alpha,\Gamma)-\varepsilon)}
\end{equation*}
for all but at most $\delta$ fraction of elements $\omega \in \prX$.
\end{aufzi}
\end{cor}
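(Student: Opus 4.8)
The plan is to derive the corollary directly from the Shannon--McMillan--Breiman theorem (Theorem~\ref{thm.smb}) by translating the pointwise statement \eqref{eq.smb1} into a statement about the measures of the atoms of $\alpha^{F_n}$, and then upgrading the almost-everywhere convergence to a control of the exceptional set that is uniform in $n$, via convergence in measure.

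First I would note that, with $\log$ to base $2$, one has $I_{\alpha^{F_n}}(\omega) = -\log_2 \mu(\alpha^{F_n}(\omega))$, and that $\omega \mapsto \mu(\alpha^{F_n}(\omega))$ is constant on each atom $A \in \alpha^{F_n}$, with value $\mu(A)$. Writing $h := h_\mu(\alpha,\Gamma)$ (which, by the remark following Theorem~\ref{thm.smb}, coincides with the limit $h'_\mu(\alpha,\Gamma)$ in \eqref{eq.smb1}) and
\[
E_n := \Bigl\{ \omega \in \prX : \Bigl| \tfrac{I_{\alpha^{F_n}}(\omega)}{\cntm{F_n}} - h \Bigr| > \varepsilon \Bigr\},
\]
the inequality $2^{-\cntm{F_n}(h+\varepsilon)} \leq \mu(A) \leq 2^{-\cntm{F_n}(h-\varepsilon)}$ for an atom $A$ is, after applying $-\log_2$, exactly the statement that $\bigl| I_{\alpha^{F_n}}(\omega)/\cntm{F_n} - h \bigr| \leq \varepsilon$ for any (hence every) $\omega \in A$, i.e.\ that $A \not\subseteq E_n$. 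Since $I_{\alpha^{F_n}}$ is constant on atoms, $E_n$ is itself a union of atoms of the finite partition $\alpha^{F_n}$; the atoms violating the inequality in (a) are precisely those contained in $E_n$, so their total measure is $\mu(E_n)$, and the points $\omega$ violating the inequality in (b) form precisely the set $E_n$. Thus (a) and (b) are the same assertion: that $\mu(E_n) < \delta$ for all sufficiently large $n$.

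It then remains to bound $\mu(E_n)$. By Theorem~\ref{thm.smb}, $I_{\alpha^{F_n}}/\cntm{F_n} \to h$ for $\mu$-a.e.\ $\omega$ (and in $\Ell{1}{(\prX)}$); since $\mu$ is a probability measure, almost-everywhere convergence implies convergence in measure, so $\mu(E_n) \to 0$ as $n \to \infty$, and one takes $n_0$ so that $\mu(E_n) < \delta$ for all $n \geq n_0$. The only step requiring any care is this last one --- passing from the bare pointwise limit to a bound on $\mu(E_n)$ that is uniform over all large $n$ --- which is exactly where convergence in measure (equivalently, Egorov's theorem) is used rather than the almost-sure statement alone; everything else is a routine unwinding of the definitions of $I_{\alpha^{F_n}}$ and $\alpha^{F_n}$.
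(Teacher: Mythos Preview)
Your proof is correct and follows essentially the same route as the paper's: both invoke Theorem~\ref{thm.smb} to get a.e.\ convergence of $I_{\alpha^{F_n}}/\cntm{F_n}$ to $h_\mu(\alpha,\Gamma)$, upgrade this to convergence in measure, and then observe that the resulting bound $\mu(E_n) < \delta$ for $n \geq n_0$ yields both (a) and (b). Your write-up is more explicit than the paper's (which simply says ``It is now clear that both assertions follow'') about why $E_n$ is a union of atoms and why (a) and (b) both reduce to $\mu(E_n) < \delta$, but the argument is the same.
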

\begin{proof}
By Therorem \ref{thm.smb}, $\frac{I_{\alpha^{F_n}}(\omega)}{|F_n|} \to h_{\mu}(\alpha,\Gamma)$ for a.e. $\omega$ and hence also in measure. Thus, given $\varepsilon,\delta>0$ as above, there is $n_0$ s.t. for all $n \geq n_0$ we have
\begin{equation*}
\mu\{ \omega \in \prX: \left| \frac{I_{\alpha^{F_n}}(\omega)}{|F_n|} - h_{\mu}(\alpha,\Gamma)\right|\geq \varepsilon \}< \delta.
\end{equation*}
It is now clear that both assertions follow.
\end{proof}

\subsection{(Regular) F{\o}lner monotilings}
\label{ss.regfoln}
The purpose of this section is to discuss the notion of a
\emph{F{\o}lner monotiling}, that was introduced by B. Weiss in \cite{weiss2001}. However, in this article we have to introduce both `left' and `right' monotilings, while the original notion introduced by Weiss is a `left' monotiling. The (new) notion of a \emph{regular F{\o}lner monotiling}, central to the results of this paper, will also be suggested below.

A \textbf{left monotiling} $[F, \calZ]$ in a discrete group $\Gamma$ is a pair of a finite set $F\subseteq \Gamma$, which we call \textbf{a tile}, and a set $\calZ \subseteq \Gamma$, which we call a set of \textbf{centers}, such that $\{ F z: z \in \calZ \}$ is a covering of $\Gamma$ by disjoint translates of $F$. Respectively, given a \textbf{right monotiling} $[\calZ,F]$ we require that $\{ z F: z \in \calZ \}$ is a covering of $\Gamma$ by disjoint translates of $F$. A \textbf{left (right) F{\o}lner monotiling} is a sequence of monotilings $([F_n,\calZ_n])_{n \geq 1}$ (resp. $([\calZ_n,F_n])_{n \geq 1}$) s.t. $(F_n)_{n \geq 1}$ is a left (resp. right) F{\o}lner sequence in $\Gamma$. A left F{\o}lner monotiling $([F_n,\calZ_n])_{n \geq 1}$ is called \textbf{symmetric} if for every $k \geq 1$ the set of centers $\calZ_k$  is symmetric, i.e. $\calZ_k^{-1} = \calZ_k$. It is clear that if $([F_n,\calZ_n])_{n \geq 1}$ is a symmetric F{\o}lner monotiling, then $([\calZ_n,F_n^{-1}])_{n \geq 1}$ is a right F{\o}lner monotiling.

We begin with a basic example.
\begin{exa}
\label{ex.zdmonotiling}
Consider the group $\Z^d$ for some $d \geq 1$ and the F{\o}lner
sequence $(F_n)_{n\geq 1}$ in $\Z^d$ given by $$F_n:=
[0,1,2,\dots,n-1]^d.$$ Furthermore, for every $n$ let $$\calZ_n:= n
\Z^d.$$ It is easy to see that $([F_n,\calZ_n])_{n \geq 1}$ is a
symmetric F{\o}lner monotiling of $\Z^d$, and that $(F_n)_{n \geq 1}$
is a tempered two-sided F{\o}lner sequence.
\end{exa}

A less trivial example is given by F{\o}lner monotilings of the
discrete Heisenberg group $\UT{3}{\Z}$. We will return to F{\o}lner monotilings of $\UT{d}{\Z}$ for $d > 3$ later.

\begin{exa}
\label{ex.hmonotiling}
Consider the group $\UT{3}{\Z}$, i.e. the discrete Heisenberg group
$H_3$. By the definition,
\begin{equation*}
\UT{3}{\Z} := \left\{ \left( \begin{matrix}
        1 & a & c \\
        0 & 1 & b \\
        0 & 0 & 1
      \end{matrix} \right): a,b,c \in \Z  \right\}.
\end{equation*}
To simplify the notation, we will denote a matrix
\begin{equation*}
\left( \begin{matrix}
        1 & a & c \\
        0 & 1 & b \\
        0 & 0 & 1
      \end{matrix} \right) \in \UT{3}{\Z}
\end{equation*}
by the corresponding triple $(a,b,c)$ of its
entries. Then the products and inverses in $\UT{3}{\Z}$ can be computed by the formulas
\begin{align*}
(a,b,c)(x,y,z) &= (a+x,b+y,c+z+ya), \\
(a,b,c)^{-1} &= (-a,-b,ba-c).
\end{align*}
For every $n \geq 1$, consider the subgroup 
\begin{equation*}
\calZ_n:=\{ (a,b,c) \in \UT{3}{\Z}: a,b \in n \Z, c \in n^2 \Z \}.
\end{equation*}
This is a finite index subgroup, and it is easy to see that for every $n$ the finite set
\begin{equation*}
F_n:=\{ (a,b,c) \in \UT{3}{\Z}: 0 \leq a,b < n , 0 \leq c < n^2  \}
\end{equation*}
is a fundamental domain for $\calZ_n$. One can show
(see \cite{lenz2011}) that $(F_n)_{n \geq 1}$ is a left F{\o}lner sequence, and a
similair argument shows that it is a right F{\o}lner sequence as
well. $([F_n,\calZ_n])_{n\geq 1}$ is a symmetric F{\o}lner monotiling.
In order to check temperedness of $(F_n)_{n \geq 1}$, note that for every $n > 1$
\begin{align*}
\bigcup\limits_{i<n} F_i^{-1} F_n \subseteq F_n^{-1} F_n,
\end{align*}
where 
\begin{equation*}
F_n^{-1} \subseteq \{ (a,b,c): -n < a,b \leq 0, -n^2<c<n^2\}.
\end{equation*}
It is easy to see that for every $n>1$
\begin{equation*}
F_n^{-1} F_n \subseteq \{(a,b,c): -n <a,b < n, - 3 n^2 < c< 3 n^2 \}.
\end{equation*}
Since $\cntm{F_n} = n^4$ for every $n$, the sequence $(F_n)_{n\geq 1}$ is tempered.
\end{exa}

For the purposes of this work we need to introduce special F{\o}lner monotilings
where one can `average' along the intersections $F_n \cap \calZ_k$ for
every fixed $k$ and $n \to \infty$. This, together with some other
requirements, leads to the following definition. We call a left F{\o}lner monotoling $([F_n,\calZ_n])_{n \geq 1}$ \textbf{regular} if the following assumptions hold:
\begin{aufzi}
\item the sequence $(F_n)_{n \geq 1}$ is a tempered two-sided F{\o}lner sequence;
\item for every $k$ the function $\indi{\calZ_k} \in \Ell{\infty}(\Gamma)$ is a good weight for pointwise convergence of
  ergodic averages along the sequence $(F_n)_{n \geq 1}$;
\item $\frac{\cntm{F_n}}{\log n} \to \infty$ as $n \to \infty$;
\item $\ue \in F_n$ for every $n$.
\end{aufzi}

Of course, our motivating example for the notion of a regular
F{\o}lner monotiling is the Example \ref{ex.zdmonotiling}. Below we explain why the corresponding indicator functions $\indi{\calZ_k}$ are good
weights for every $k$. Checking the remaining conditions for the
regularity of the F{\o}lner monotiling $([F_n,\calZ_n])_{n \geq 1}$ is
straightforward. 
\begin{exa}
\label{ex.regmonot}
Let $\Gamma$ be an amenable group with a fixed tempered F{\o}lner
sequence $(F_n)_{n \geq 1}$, $H \leq \Gamma$ be a finite index
subgroup. Let $F \subseteq \Gamma$ be the fundamental domain for left
cosets of $H$. Then $[F,H]$ is a left monotiling of
$\Gamma$. Furthermore, the indicator function $\indi{H}$ is a good
weight. To see this, consider the ergodic system
$\bfX:=(\fact{\Gamma}{H},|\cdot|_{\cdot},\Gamma)$, where $\Gamma$ acts
on the left on the finite set $\fact{\Gamma}{H}$ with normalized
counting measure $|\cdot|_{\cdot}$ by
\begin{equation*}
g (f H):=g f H, \ f \in F, g \in \Gamma.
\end{equation*}
Let $f:=\indi{\ue H} \in \Ell{\infty}{(\fact{\Gamma}{H})}$ and $x:=\ue
H \in \fact{\Gamma}{H}$. Then $\indi{H}(g) = f(g x)$ for all $g \in
\Gamma$ and the statement follows from Theorem
\ref{thm.retthm}\footnote{One can also prove this directly without
  referring to Theorem \ref{thm.retthm}.}.
\end{exa}

In what follows we will need the following simple
\begin{prop}
\label{prop.fmonot}
Let  $([F_n, \calZ_n])_{n \geq 1}$ be a left F{\o}lner monotiling of $\Gamma$ s.t. $\ue \in F_n$ for every $n$. Then for every fixed $k$
\begin{equation}
\frac{\cntm{\kintl{F_k}{F_n} \cap \calZ_k}}{\cntm{F_n}} \to \frac 1 {\cntm{F_k}}
\end{equation}
and
\begin{equation}
\frac{\cntm{F_n \cap \calZ_k}}{\cntm{F_n}} \to \frac 1 {\cntm{F_k}}
\end{equation}
as $n \to \infty$. If, additionally,  $(F_n)_{n \geq 1}$ is a
two-sided F{\o}lner sequence, then for every fixed $k$
\begin{equation}
\frac{\cntm{\kintl{F_k}{F_n} \cap \kintr{F_k^{-1}}{F_n} \cap \calZ_k}}{\cntm{F_n}} \to \frac 1 {\cntm{F_k}}
\end{equation}
as $n \to \infty$.
\end{prop}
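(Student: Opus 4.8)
The plan is to reduce all three limits to the elementary fact behind a monotiling: for fixed $k$, the translates $\{F_k z : z \in \calZ_k\}$ form a partition of $\Gamma$, so any subset of $\Gamma$ that is a union of whole tiles has cardinality an exact multiple of $\cntm{F_k}$, and the discrepancy between $F_n$ and a suitable union of whole tiles is controlled by a F{\o}lner boundary of $F_n$.

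First I would record two bookkeeping remarks, both using $\ue \in F_k$. A point $g \in F_n$ lies in $\kintl{F_k}{F_n}$ exactly when $F_k g \subseteq F_n$: membership $g\in\kbdrl{F_k}{F_n}$ forces $F_k g$ to meet both $F_n$ and its complement, while $g = \ue\, g \in F_k g \cap F_n$ makes the first condition automatic. Since $z \in F_k z$ for each $z \in \calZ_k$, this gives
\[
\kintl{F_k}{F_n} \cap \calZ_k \;=\; \{ z \in \calZ_k : F_k z \subseteq F_n \} \;=:\; \calZ_k^{(n)} .
\]
Let $B_n := F_n \setminus \bigcup_{z \in \calZ_k^{(n)}} F_k z$, i.e. the set of $g \in F_n$ whose (unique) $\calZ_k$-tile is not contained in $F_n$. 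Then $F_n \setminus B_n = \bigsqcup_{z \in \calZ_k^{(n)}} F_k z$ is a disjoint union of $\cntm{\calZ_k^{(n)}}$ left translates of $F_k$, whence $\cntm{F_n} - \cntm{B_n} = \cntm{F_k}\,\cntm{\calZ_k^{(n)}}$.

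The crux is the estimate $\cntm{B_n}/\cntm{F_n} \to 0$. If $g \in B_n$ has tile $F_k z$, choose $h \in F_k$ with $h z \notin F_n$ and write $g = h' z$ with $h' \in F_k$; then $h z = (h h'^{-1}) g$ with $h h'^{-1} \in F_k F_k^{-1}$, while $g = \ue\, g$ with $\ue \in F_k F_k^{-1}$, so $g \in \kbdrl{F_k F_k^{-1}}{F_n}$. As $F_k F_k^{-1}$ is a fixed finite set and $(F_n)_{n \geq 1}$ is a (strong) left F{\o}lner sequence, $\cntm{\kbdrl{F_k F_k^{-1}}{F_n}}/\cntm{F_n} \to 0$, hence $\cntm{B_n}/\cntm{F_n} \to 0$. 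Dividing the identity $\cntm{F_n} - \cntm{B_n} = \cntm{F_k}\,\cntm{\calZ_k^{(n)}}$ by $\cntm{F_n}$ gives the first limit. For the second, observe $\calZ_k^{(n)} \subseteq F_n \cap \calZ_k$, and any $z \in (F_n \cap \calZ_k) \setminus \calZ_k^{(n)}$ lies in $F_n$ and in its own tile $F_k z \not\subseteq F_n$, so $z \in B_n$; thus $\cntm{\calZ_k^{(n)}} \leq \cntm{F_n \cap \calZ_k} \leq \cntm{\calZ_k^{(n)}} + \cntm{B_n}$, and dividing by $\cntm{F_n}$ squeezes the middle term to $1/\cntm{F_k}$.

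For the last limit, assume $(F_n)_{n\geq 1}$ is two-sided. Then $\kintl{F_k}{F_n} \cap \kintr{F_k^{-1}}{F_n} \cap \calZ_k = \calZ_k^{(n)} \cap \kintr{F_k^{-1}}{F_n}$, and since $\calZ_k^{(n)} \subseteq F_n$ we have $\calZ_k^{(n)} \setminus \kintr{F_k^{-1}}{F_n} \subseteq F_n\setminus\kintr{F_k^{-1}}{F_n}=\kbdrr{F_k^{-1}}{F_n}$, whose cardinality is $o(\cntm{F_n})$ because $(F_n)_{n\geq 1}$ is also a (strong) right F{\o}lner sequence and $F_k^{-1}$ is a fixed finite set; subtracting this from the first limit yields the claim. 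The only delicate point, and it is routine, is the crux paragraph: turning ``a center's tile overflows $F_n$'' into genuine membership in a F{\o}lner boundary of $F_n$, while keeping the left/right conventions in $\kbdrl{\cdot}{\cdot}$, $\kbdrr{\cdot}{\cdot}$, $\kintl{\cdot}{\cdot}$, $\kintr{\cdot}{\cdot}$ straight and invoking $\ue \in F_k$ at each step.
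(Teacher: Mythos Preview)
Your proof is correct and shares the paper's skeleton (characterize $\kintl{F_k}{F_n}$ via $F_k g \subseteq F_n$, exploit the tiling, absorb the error into a F{\o}lner boundary), but the counting is organized differently. The paper introduces the auxiliary set $A_{n,k} = \{z \in \calZ_k : F_k z \cap \kintl{F_k}{F_n} \neq \varnothing\}$ of centers whose tiles touch the interior, notes that these tiles cover $\kintl{F_k}{F_n}$, and then splits $A_{n,k}$ into centers in $\kintl{F_k}{F_n}$ and centers in $\kbdrl{F_k}{F_n}$, the latter being negligible. You instead recognize at once that $\calZ_k^{(n)} := \calZ_k \cap \kintl{F_k}{F_n}$ is exactly the set of centers whose tiles lie entirely inside $F_n$, and bound the leftover $B_n = F_n \setminus \bigcup_{z \in \calZ_k^{(n)}} F_k z$ by showing $B_n \subseteq \kbdrl{F_k F_k^{-1}}{F_n}$. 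Your route is a bit more direct (no auxiliary cover to unwind at the end), at the cost of using the compound boundary set $F_k F_k^{-1}$; the paper stays with the smaller $\kbdrl{F_k}{F_n}$ throughout.

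One small slip in your last paragraph: $F_n \setminus \kintr{F_k^{-1}}{F_n} = F_n \cap \kbdrr{F_k^{-1}}{F_n}$, which is contained in but in general not equal to $\kbdrr{F_k^{-1}}{F_n}$ (the right boundary can protrude outside $F_n$). The inclusion is all you use, so the argument stands.
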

\begin{proof}
Observe first that, under initial assumptions of the theorem, for every set $A
\subseteq \Gamma$, $k \geq 1$ and $g \in \Gamma$ we have
\begin{equation*}
g \in \kintl{F_k}{A} \Leftrightarrow F_k g \subseteq A
\end{equation*}
and
\begin{equation*}
g \in \kintr{F_k^{-1}}{A} \Leftrightarrow g F_k^{-1} \subseteq A.
\end{equation*}
Let $k \geq 1$ be fixed. For every $n \geq 1$, consider the finite set $A_{n,k}:=\{ g \in \calZ_k: F_k g \cap \kintl{F_k}{F_n} \neq \varnothing \}$. Then the translates $\{ F_k z: z \in A_{n,k} \}$ form a disjoint cover of the set $\kintl{F_k}{F_n}$.  It is easy to see that
\begin{equation*}
\Gamma = \kintl{F_k}{F_n} \sqcup \kbdrl{F_k}{F_n} \sqcup \kintl{F_k}{\comp{F_n}}. 
\end{equation*}
Since $A_{n,k} \cap \kintl{F_k}{\comp{F_n}} = \varnothing$, we can decompose the set of centers $A_{n,k}$ as follows:
\begin{equation*}
A_{n,k} = (A_{n,k} \cap \kintl{F_k}{F_n}) \sqcup (A_{n,k} \cap \kbdrl{F_k}{F_n}).
\end{equation*} 
Since $(F_n)_{n \geq 1}$ is a F{\o}lner sequence,
\begin{equation*}
\frac{\cntm{F_k (A_{n,k} \cap \kbdrl{F_k}{F_n})}}{\cntm{F_n}}=\frac{\cntm{F_k} \cdot \cntm{A_{n,k} \cap \kbdrl{F_k}{F_n}}}{\cntm{F_n}} \to 0
\end{equation*}
and $\cntm{\kintl{F_k}{F_n}}/\cntm{F_n} \to 1$ as $n \to \infty$. Then from the inequalities
\begin{align*}
\frac{\cntm{\kintl{F_k}{F_n}}}{\cntm{F_n}} &\leq \frac{\cntm{F_k (A_{n,k} \cap \kbdrl{F_k}{F_n})}}{\cntm{F_n}} + \frac{\cntm{ F_k (A_{n,k} \cap \kintl{F_k}{F_n}) }}{\cntm{F_n}} \\
&\leq \frac{\cntm{F_k (A_{n,k} \cap \kbdrl{F_k}{F_n}) }}{\cntm{F_n}} + 1
\end{align*}
we deduce that
\begin{equation}
\frac{\cntm{F_k} \cdot \cntm{A_{n,k} \cap \kintl{F_k}{F_n}}}{\cntm{F_n}} \to 1
\end{equation}
as $n \to \infty$. It remains to note that $A_{n,k} \cap
\kintl{F_k}{F_n} = \calZ_k \cap \kintl{F_k}{F_n}$ and the first
statement follows. The second statement follows trivially from the first one. To
obtain the last statement, observe that $\cntm{\kintr{F_k^{-1}}{F_n}}
/ \cntm{F_n} \to 1$ as $n \to \infty$ since $(F_n)_{n \geq 1}$ is a right F{\o}lner
sequence, thus
\begin{equation*}
\lim\limits_{n \to \infty} \frac{\cntm{\kintl{F_k}{F_n} \cap F_n \cap
    \calZ_k}}{\cntm{F_n}} = \lim\limits_{n
  \to \infty} \frac{\cntm{\kintl{F_k}{F_n} \cap \kintr{F_k^{-1}}{F_n} \cap
    \calZ_k}}{\cntm{F_n}} = \frac 1 {\cntm{F_k}}.
\end{equation*}
\end{proof}

This proposition has an important corollary, namely
\begin{thm}
\label{thm.wghtd}
Let $([F_n,\calZ_n])_{n \geq 1}$ be a regular F{\o}lner monotiling. Then for every measure-preserving system $\bfX=(\prX,\mu,\Gamma)$, every $f \in \Ell{\infty}{(\prX)}$ and every $k \geq 1$ the limits
\begin{align*}
& \cntm{F_k} \lim\limits_{n \to \infty} \avg{g \in F_n} \indi{\calZ_k} f(g \omega)  =  \lim\limits_{n \to \infty} \avg{g \in F_n \cap \calZ_k} f(g \omega) = \\
&=\lim\limits_{n \to \infty} \avg{g \in \kintl{F_k}{F_n} \cap \kintr{F_k^{-1}}{F_n} \cap \calZ_k} f(g \omega)
\end{align*}
exist and coincide for $\mu$-a.e. $\omega \in \prX$.
\end{thm}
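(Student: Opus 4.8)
The plan is to reduce the three limits to each other using the fact that a good weight produces a convergent average, plus the density statements from Proposition~\ref{prop.fmonot}. First I would fix a measure-preserving system $\bfX=(\prX,\mu,\Gamma)$, a function $f \in \Ell{\infty}{(\prX)}$ and $k \geq 1$. By regularity condition (b), $\indi{\calZ_k}$ is a good weight along $(F_n)_{n \geq 1}$, so the average $\avg{g \in F_n} \indi{\calZ_k}(g) f(g\omega)$ converges for $\mu$-a.e.\ $\omega$; call the limit $L(\omega)$. The identity $\avg{g \in F_n} \indi{\calZ_k}(g) f(g\omega) = \frac{\cntm{F_n \cap \calZ_k}}{\cntm{F_n}} \cdot \avg{g \in F_n \cap \calZ_k} f(g\omega)$ together with $\cntm{F_n \cap \calZ_k}/\cntm{F_n} \to 1/\cntm{F_k}$ from Proposition~\ref{prop.fmonot} shows immediately that $\avg{g \in F_n \cap \calZ_k} f(g\omega)$ converges to $\cntm{F_k} L(\omega)$ for a.e.\ $\omega$; this gives the first equality.

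For the third limit, the idea is that replacing the index set $F_n \cap \calZ_k$ by the slightly smaller set $\kintl{F_k}{F_n} \cap \kintr{F_k^{-1}}{F_n} \cap \calZ_k$ changes the average by a vanishing amount, because the two index sets differ by a set of density $0$ relative to $\cntm{F_n}$ and $f$ is bounded. Concretely, write $S_n := F_n \cap \calZ_k$ and $T_n := \kintl{F_k}{F_n} \cap \kintr{F_k^{-1}}{F_n} \cap \calZ_k \subseteq S_n$. Then
\begin{equation*}
\left| \avg{g \in S_n} f(g\omega) - \avg{g \in T_n} f(g\omega) \right| \leq \frac{1}{\cntm{S_n}} \sum_{g \in S_n \setminus T_n} \abs{f(g\omega)} + \left( \frac{1}{\cntm{T_n}} - \frac{1}{\cntm{S_n}} \right) \sum_{g \in T_n} \abs{f(g\omega)} \leq 2 \norm[\infty]{f} \cdot \frac{\cntm{S_n \setminus T_n}}{\cntm{T_n}}.
\end{equation*}
By the first and third assertions of Proposition~\ref{prop.fmonot} both $\cntm{S_n}/\cntm{F_n}$ and $\cntm{T_n}/\cntm{F_n}$ tend to $1/\cntm{F_k}$, so $\cntm{S_n \setminus T_n}/\cntm{F_n} \to 0$ and hence $\cntm{S_n \setminus T_n}/\cntm{T_n} \to 0$; therefore the difference above vanishes and the third average converges to the same limit $\cntm{F_k} L(\omega)$. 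Finally, since $\ue \in F_n$ for all $n$ (part of regularity), the sets $F_n \cap \calZ_k$ are nonempty for $n$ large, so all averages are well-defined for large $n$.

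The main obstacle is not conceptual but bookkeeping: one must be careful that Proposition~\ref{prop.fmonot} is being applied with its \emph{last} assertion (which needs the two-sided F{\o}lner hypothesis, supplied by regularity condition (a)) to control $\cntm{T_n}$, and one should note the null set on which convergence fails is the same for all three averages since it is governed solely by the good-weight convergence of $\avg{g \in F_n} \indi{\calZ_k}(g) f(g\omega)$. Everything else is the triangle inequality and the density estimates already in hand.
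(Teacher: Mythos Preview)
Your argument is correct and is precisely the approach of the paper, which gives only a one-sentence proof invoking the good-weight property for existence of the first limit and Proposition~\ref{prop.fmonot} for the equalities; you have simply spelled out the bookkeeping. (Minor slip: for $\cntm{S_n}/\cntm{F_n}\to 1/\cntm{F_k}$ you want the \emph{second} assertion of Proposition~\ref{prop.fmonot}, not the first, though either together with the third suffices since $T_n\subseteq S_n$.)
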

\begin{proof}
Existence of the limit on the left hand side follows from the definition of a good weight and the definition of a regular F{\o}lner monotiling, equality of the limits follows from the previous proposition.
\end{proof}

Later in the Section \ref{ss.compfoln} we will add a \emph{computability} requirement to the notion of a regular F{\o}lner monotiling. The central result of this paper says that the Brudno's theorem holds for groups admitting a computable regular symmetric F{\o}lner monotiling. 

\subsection{Computability and Kolmogorov complexity}
\label{ss.compcompl}
In this section we will discuss the standard notions of computability
and Kolmogorov complexity that will be used in this work. We refer to Chapter 7 in \cite{hedman2004} for details, more definitions and proofs.

For a natural number $k$ a $k$-ary \textbf{partial function} is any
function of the form $f: D \to \N \cup \{ 0 \}$, where $D$,
\textbf{domain of definition}, is a subset of $(\N \cup \{ 0 \})^k$
for some natural $k$. A $k$-ary partial function is called
\textbf{computable} if there exists an algorithm which takes a
$k$-tuple of nonnegative integers $(a_1,a_2,\dots,a_k)$, prints
$f((a_1,a_2,\dots,a_k))$ and terminates if $(a_1,a_2,\dots,a_k)$ is in
the domain of $f$, while yielding no output otherwise (in
particular, it might fail to terminate). A function is called \textbf{total} if it is defined everywhere.

The term \emph{algorithm} above stands, informally speaking, for a computer program. One way to formalize it is through introducing the class of \emph{recursive functions}, and the resulting notion coincides with the class of functions computable on \emph{Turing machines}. We do not focus on these question in this work, and we will think about computability in an `informal' way.

A set $A \subseteq \N$ is called \textbf{recursive} (or \textbf{computable}) if the indicator function $\indi{A}$ of $A$ is computable. It is easy to see that finite and co-finite subsets of $\N$ are computable. Furthermore, for computable sets $A,B \subseteq \N$ their union and intersection are also computable. If a total function $f: \N \to \N$ is computable and $A \subseteq \N$ is a computable set, then $f^{-1}(A)$, the full preimage of $A$, is computable. The image of a computable set via a total computable bijection is computable, and the inverse of such a bijection is a computable function.

A sequence of subsets $(F_n)_{n \geq 1}$ of $\N$ is called \textbf{computable} if the total function $\indi{F_{\cdot}}: (n,x) \mapsto \indi{F_n}(x)$ is computable. It is easy to see that a total function $f: \N \to \N$ is computable if and only if the sequence of singletons $(f(n))_{n \geq 1}$ is computable in the sense above.

It is very often important to have a numeration of elements of a set by natural numbers. A set $A \subseteq \N$ is called \textbf{enumerable} if there exist a total computable surjective function $f: \N \to A$. If the set $A$ is infinite, we can also require $f$ to be injective. This leads to an equivalent definition because an algorithm computing the function $f$  can be modified so that no repetitions occur in its output. Finite and cofinite sets are enumerable. It can be shown (Proposition 7.44 in \cite{hedman2004}) that a set $A$ is computable if and only if both $A$ and $\N \setminus A$ are enumerable. Furthermore, for a set $A \subsetneq \N$ the following are equivalent:
\begin{aufzii}
\item $A$ is enumerable;
\item $A$ is the domain of definition of a partial recursive function.
\end{aufzii}

Finally, we can introduce the Kolmogorov complexity for finite
words. Let $A$ be a computable partial function defined on a domain
$D$ of finite binary words with values in the set of all finite words
over a finite alphabet $\Lambda$. Of course, we have defined computable functions on subsets of $(\N \cup \{ 0 \})^k$ with values in $\N \cup \{ 0 \}$ above, but this can be easily extended to (co)domains of finite words over finite alphabets. We can think of $A$ as a `decompressor' that takes compressed binary descriptions (or `programs') in its domain, and decompresses them to finite words over alphabet $\Lambda$. Then we define the \textbf{Kolmogorov complexity} of a finite word $\omega$ with respect to $A$ as follows:
\begin{equation*}
\kcc{A}{\omega}:=\inf\{ l(p): A(p)=w \},
\end{equation*}
where $l(p)$ denotes the length of the description. If some word $\omega_0$ does not admit a compressed version, then we let $\kcc{A}{\omega_0} = \infty$. The \textbf{average Kolmogorov complexity} with respect to $A$ is defined by
\begin{equation*}
\kcca{A}{\omega}:=\frac{\kcc{A}{\omega}}{l(\omega)},
\end{equation*}
where $l(\omega)$ is the length of the word $\omega$. Intuitively speaking, this quantity tells how effective the compressor $A$ is when describing the word $\omega$.

Of course, some decompressors are intuitively better than some others. This is formalized by saying that $A_1$ is \textbf{not worse} than $A_2$ if there is a constant $c$ s.t. for all words $\omega$
\begin{equation}
\label{eq.optdecomp}
\kcc{A_1}{\omega} \leq \kcc{A_2}{\omega}+c.
\end{equation}
A theorem of Kolmogorov says that there exist a decompressor $\aast$
that is optimal, i.e. for every decompressor $A$ there is a constant
$c$ s.t. for all words $\omega$ we have
\begin{equation*}
\kcc{\aast}{\omega} \leq \kcc{A}{\omega}+c.
\end{equation*}
An optimal decompressor is not unique, so from now on we let $\aast$
be a fixed optimal decompressor.

The notion of Kolmogorov complexity can be extended to words defined on finite subsets of $\N$, and this will be essential in the following sections. More precisely, let $X \subseteq \N$ be a finite subset, $\imath_X: X \to \{ 1,2,\dots, \card X\} $ an increasing bijection, $\Lambda$ a finite alphabet, $A$ a decompressor and $\omega \in \Lambda^Y$ a word defined on some set $Y \supseteq X$. Then we let
\begin{equation}
\label{eq.kcnsubs}
\uK_A(\omega,X):=\kcc{A}{\omega \circ \imath_X^{-1}}.
\end{equation}
and
\begin{equation}
\label{eq.kcansubs}
\overline{\uK}_A(\omega,X):=\frac{\kcc{A}{\omega \circ \imath_X^{-1}}}{\card X}.
\end{equation}
We call $\uK_A(\omega,X)$ the \textbf{Kolmogorov complexity} of $\omega$ over $X$ with respect to $A$, and $\overline{\uK}_A(\omega,X)$ is called the \textbf{mean Kolmogorov complexity} of $\omega$ over $X$ with respect to $A$. If a decompressor $A_1$ is not worse than a decompressor $A_2$ with some constant $c$, then for all $X, \omega$ above
\begin{equation*}
\uK_{A_1}(\omega,X)\leq\uK_{A_2}(\omega,X)+c.
\end{equation*}

If $X \subseteq \N$ is an infinite subset and $(F_n)_{n \geq 1}$ is a sequence of finite subsets of $X$ s.t. $\card{F_n} \to \infty$, then the asymptotic Kolmogorov complexity of $\omega \in \Lambda^X$ with respect to $(F_n)_{n \geq 1}$ and a decompressor $A$ is defined by
\begin{equation*}
\widehat{\uK}_A(\omega):=\limsup\limits_{n \to \infty}{\overline{\uK}_A(\omega|_{F_n},F_n)}.
\end{equation*}
The dependence on the sequence $(F_n)_{n \geq 1}$ is omitted in the notation. It is easy to see that for every decompressor $A$ and $\omega \in \Lambda^X$
\begin{equation}
\label{eq.optdeclim1}
\widehat{\uK}_{\aast}(\omega) \leq \widehat{\uK}_A(\omega).
\end{equation}

From now on, we will (mostly) use the optimal decompressor $\aast$ and
write $\uK(\omega,X)$, $\overline{\uK}(\omega,X)$ and
$\widehat{\uK}(\omega)$ omitting an explicit reference to $\aast$.

When estimating the Kolmogorov complexity of words we will often have to encode nonnegative integers using binary words. We will now fix some notation that will be used later. When $n$ is a nonnegative integer, we write
$\underline \un$ for the \textbf{binary encoding} of $n$ and $\overline \un$ for the \textbf{doubling encoding} of $n$, i.e. if $b_l b_{l-1} \dots b_0$ is the binary expansion of $n$, then $\underline \un$ is the binary word $\ub_l \ub_{l-1} \dots \ub_0$ of length $l+1$ and $\overline \un$ is the binary word $\ub_l \ub_l \ub_{l-1} \ub_{l-1} \dots \ub_0 \ub_0$ of length $2l+2$. We denote the length of the binary word $\uw$ by $l(\uw)$, and is clear that $l(\underline \un) \leq \lfloor \log n\rfloor+1$ and $l(\overline \un) \leq 2 \lfloor \log n\rfloor+2$. We write $\widehat \un$ for the encoding $\overline{l(\underline \un)} \del \underline \un$ of $n$, i.e. the encoding begins with the length of the binary word $\underline \un$ encoded using doubling encoding, then the delimiter $\del$ follows, then the word $\underline n$. It is clear that $l(\widehat \un) \leq  2 \lfloor \log(\lfloor \log n \rfloor + 1) \rfloor + \lfloor \log n \rfloor + 5$. This encoding enjoys the following property: given a binary string
\begin{equation*}
\widehat x_1 \widehat x_2 \dots \widehat x_l,
\end{equation*}
the integers $x_1,\dots,x_l$ are unambiguously restored. We will call such an encoding a \textbf{simple prefix-free encoding}.

\subsection{Computable spaces, word presheaves and complexity}
\label{ss.compspaces}
The goal of this section is to introduce the notions of \emph{computable space}, \emph{computable function} between computable spaces and \emph{word presheaf} over computable spaces. The complexity of sections of word presheaves and asymptotic complexity of sections of word presheaves are introduced in this section as well.

An \textbf{indexing} of a set $X$ is an injective mapping $\imath: X \to \N$ such that $\imath(X)$ is a computable subset. Given an element $x \in X$, we call $\imath(x)$ the \textbf{index} of $x$. If $i \in \imath(X)$, we denote by $x_i$ the element of $X$ having index $i$. A \textbf{computable space} is a pair $(X, \imath)$ of a set $X$ and an indexing $\imath$. Preimages of computable subsets of $\N$ under $\imath$ are called \textbf{computable subsets} of $(X,\imath)$. Each computable subset $Y \subseteq X$ can be seen as a computable space $(Y, \imath|_Y)$, where $\imath|_Y$ is the restriction of the indexing function. Of course, the set $\N$ with identity as an indexing function is a computable space, and the computable subsets of $(\N, \id)$ are precisely the computable sets of $\N$ in the sense of Section \ref{ss.compcompl}.

Let $(X_1, \imath_1), (X_2, \imath_2), \dots, (X_k, \imath_k),(Y,\imath)$ be computable spaces. A (total) function $f: X_1\times X_2 \times \dots \times X_k \to Y$ is called \textbf{computable} if the function $\widetilde f: \imath_1(X_1) \times \imath_2(X_2) \times \dots \times \imath_k(X_k) \to \imath(Y)$ determined by the condition
\begin{equation*}
\widetilde f(\imath_1(x_1),\imath_2(x_2),\dots,\imath_k(x_k)) = \imath(f(x_1,x_2,\dots,x_k))
\end{equation*}
for all $(x_1,x_2,\dots,x_k) \in X_1 \times X_2 \times \dots \times X_k$ is computable. This definition extends the standard definition of computability from Section \ref{ss.compcompl} when the computable spaces under consideration are $(\N,\id)$. A computable function $f: (X,\imath_1) \to (Y,\imath_2)$ is called a \textbf{morphism} between computable spaces. This yields the definition of the \textbf{category of computable spaces}. Let $(X, \imath_1)$, $(X, \imath_2)$ be computable spaces. The indexing functions $\imath_1$ and $\imath_2$ of $X$ are called \textbf{equivalent} if $\id: (X,\imath_1) \to (X,\imath_2)$ is an isomorphism. It is clear that the classes of computable functions and computable sets do not change if we pass to equivalent indexing functions.

Given a computable space $(X,\imath)$, we call a sequence of subsets  $(F_n)_{n \geq 1}$ of $X$ \textbf{computable} if the function $\indi{F_{\cdot}}: \N \times X \to \{ 0,1\}, (n,x) \mapsto \indi{F_n}(x)$ is computable. We will also need a special notion of computability for sequences of \emph{finite} subsets of $(X,\imath)$. A sequence of finite subsets $(F_n)_{n \geq 1}$ of $X$ is called \textbf{canonically computable} if there is an algorithm that, given $n$, prints the set $\imath(F_n)$ and halts. One way to make this more precise is by introducing the canonical index of a finite set. Given a finite set $A=\{ x_1,x_2,\dots,x_k\} \subset \N$, we call the number $\cin{A}:=\sum\limits_{i=1}^k 2^{x_i}$ the \textbf{canonical index} of $A$. Hence a sequence of finite subsets $(F_n)_{n \geq 1}$ of $X$ is canonically computable if and only if the total function $n \mapsto \cin{\imath(F_n)}$ is computable. Of course, a canonically computable sequence of finite sets is computable, but the converse is not true due to the fact that there is no effective way of determining how large a finite set with a given computable indicator function is. It is easy to see that the class of canonically computable sequences of finite sets does not change if we pass to an equivalent indexing. The proof of the following proposition is straightforward:

\begin{prop}
Let $(X,\imath)$ be a computable space. Then
\begin{aufzi}
\item If $(F_n)_{n \geq 1}, (G_n)_{n \geq 1}$ are computable
  (resp. canonically computable) sequences of sets, then the sequences
  of sets $(F_n \cup G_n)_{n \geq 1}$, $(F_n \cap G_n)_{n \geq 1}$ and
  $(F_n \setminus G_n)_{n \geq 1}$ are computable (resp. canonically computable).
\item If $(F_n)_{n \geq 1}$ is a canonically computable sequence of sets and $(G_n)_{n \geq 1}$ is a computable sequence of sets, then the sequence of sets $(F_n \cap G_n)_{n \geq 1}$ is canonically computable.
\end{aufzi}

\end{prop}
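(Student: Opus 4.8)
The plan is to reduce every assertion to the computability of a few elementary operations on natural numbers and on finite subsets of $\N$, using throughout that $\imath$ is injective, so that $\imath$ carries Boolean combinations of subsets of $X$ to the corresponding Boolean combinations of their images: $\imath(F_n \cup G_n) = \imath(F_n) \cup \imath(G_n)$, $\imath(F_n \cap G_n) = \imath(F_n) \cap \imath(G_n)$ and $\imath(F_n \setminus G_n) = \imath(F_n) \setminus \imath(G_n)$.

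For the computable case of (a), I would observe that the indicator functions combine pointwise by $\indi{F_n \cup G_n}(x) = \max(\indi{F_n}(x),\indi{G_n}(x))$, $\indi{F_n \cap G_n}(x) = \indi{F_n}(x)\cdot\indi{G_n}(x)$ and $\indi{F_n \setminus G_n}(x) = \indi{F_n}(x)\cdot(1-\indi{G_n}(x))$. By hypothesis the maps $(n,x) \mapsto \indi{F_n}(x)$ and $(n,x) \mapsto \indi{G_n}(x)$ are computable as functions on $\N \times \imath(X)$, and $\max$, multiplication and $t \mapsto 1-t$ on $\{0,1\}$ are computable, so composing gives computability of $(n,x) \mapsto \indi{F_n \cup G_n}(x)$ and of the two remaining sequences.

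For the canonically computable case of (a), recall that a sequence of finite sets $(H_n)_{n \geq 1}$ is canonically computable iff $n \mapsto \cin{\imath(H_n)}$ is computable. Now for finite $A,B \subseteq \N$ the canonical indices $\cin{A \cup B}$, $\cin{A \cap B}$ and $\cin{A \setminus B}$ are precisely the bitwise OR, the bitwise AND, and the bitwise AND-with-complement of $\cin A$ and $\cin B$ (the bit in position $x$ of $\cin A$ records whether $x \in A$), and all three are computable functions of the pair $(\cin A,\cin B)$. Combining this with the displayed identities above and the hypothesis that $n \mapsto \cin{\imath(F_n)}$ and $n \mapsto \cin{\imath(G_n)}$ are computable, the maps $n \mapsto \cin{\imath(F_n \cup G_n)}$, $n \mapsto \cin{\imath(F_n \cap G_n)}$ and $n \mapsto \cin{\imath(F_n \setminus G_n)}$ are computable as compositions.

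For (b) I would argue directly. Given $n$, first run the algorithm witnessing canonical computability of $(F_n)_{n \geq 1}$ to obtain $\cin{\imath(F_n)}$, then read off the positions of its $1$-bits, producing an explicit finite list $S = \imath(F_n) \subseteq \N$. Since $F_n \subseteq X$ we have $S \subseteq \imath(X)$, so for each $i \in S$ one may compute $\indi{G_n}(x_i)$ using computability of $(n,x) \mapsto \indi{G_n}(x)$; discarding the $i$ with value $0$ leaves, by injectivity of $\imath$, exactly $\imath(F_n \cap G_n)$, and returning $\sum_i 2^i$ over the retained indices outputs $\cin{\imath(F_n \cap G_n)}$. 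The procedure halts because $S$ is finite. There is no genuine obstacle here; the only points that deserve care are the appeal to injectivity of $\imath$ when passing between $\imath$ of a Boolean combination and the Boolean combination of the $\imath$-images, and the observation that in (b) ``canonically computable'' is exactly the hypothesis that makes the search terminate — it hands us an actual finite list $S$ to filter through $\indi{G_n}$, whereas mere computability of $(F_n)_{n \geq 1}$ would give no a priori bound on how far to search.
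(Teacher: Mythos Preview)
Your argument is correct and is exactly the kind of routine verification the paper has in mind; indeed, the paper omits the proof entirely, stating only that it is ``straightforward''. Your reductions to pointwise Boolean operations on indicator functions, bitwise operations on canonical indices, and the filter-the-finite-list procedure for (b) are the natural (and essentially unique) way to unpack that word.
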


Let $(X,\imath)$ be a computable space and $\Lambda$ be a finite alphabet. A \textbf{word presheaf} $\calF_{\Lambda}$ on $X$ consists of
\begin{aufziii}
\item A set $\calF_{\Lambda}(U)$ of $\Lambda$-valued functions defined on the set $U$ for every computable subset $U \subseteq X$;
\item A restriction mapping $\rho_{U,V}: \calF_{\Lambda}(U) \to \calF_{\Lambda}(V)$ for each pair $U,V$ of computable subsets s.t. $V \subseteq U$, that takes functions in $\calF_{\Lambda}(U)$ and restricts them to the subset $V$.
\end{aufziii}
It is easy to see that the standard `presheaf axioms' are satisfied: $\rho_{U,U}$ is identity on $\calF_{\Lambda}(U)$ for every computable $U \subseteq X$, and for every triple $V \subseteq U \subseteq W$ we have that $\rho_{W,V} = \rho_{U,V} \circ \rho_{W,U}$. Elements of $\calF_{\Lambda}(U)$ are called \textbf{sections} over $U$, or \textbf{words} over $U$. We will often write $s|_V$ for $\rho_{U,V}s$, where $s \in \calF_{\Lambda}(U)$ is a section.

We have introduced Kolmogorov complexity of words supported on subsets of $\N$ in the previous section, now we want to extend this by introducing complexity of sections. Let $(X,\imath)$ be a computable space and let $\calF_{\Lambda}$ be a word presheaf over $(X,\imath)$. Let $U \subseteq X$ be a finite set and $\omega \in \calF_{\Lambda}(U)$. Then we define the \textbf{Kolmogorov complexity} of $\omega \in \calF_{\Lambda}(U)$ by
\begin{equation}
\kc{\omega}{U}:=\kc{\omega \circ \imath^{-1}}{\imath(U)}
\end{equation}
and the \textbf{mean Kolmogorov complexity} of $\omega \in \calF_{\Lambda}(U)$ by
\begin{equation}
\kca{\omega}{U}:=\kca{\omega \circ \imath^{-1}}{\imath(U)}.
\end{equation}
The quantities on the right hand side here are defined in the Equations \ref{eq.kcnsubs} and \ref{eq.kcansubs} respectively (which are special cases of the more general definition when the computable space $X$ is $(\N,\id)$).

Let $(F_n)_{n \geq 1}$ be a sequence of finite subsets of $X$ s.t. $\card F_n \to \infty$. Then we define \textbf{asymptotic Kolmogorov complexity} of a section $\omega \in \calF_{\Lambda}(X)$ along the sequence $(F_n)_{n \geq 1}$ by
\begin{equation*}
\kcl{\omega}:= \limsup\limits_{n \to \infty} \kca{\omega|_{F_n}}{F_n}.
\end{equation*}
Dependence on the sequence $(F_n)_{n \geq 1}$ is omitted in the notation for $\widehat{\uK}$, but it will be always clear from the context which sequence we take.

We close this section with an interesting result on invariance of asymptotic Kolmogorov complexity. It says that asymptotic Kolmogorov complexity of a section $\omega \in \calF_{\Lambda}(X)$ does not change if we pass to an equivalent indexing.
\begin{thm}[Invariance of asymptotic complexity]
\label{thm.asinv}
Let $\imath_1, \imath_2$ be equivalent indexing functions of a set $X$. Let $(F_n)_{n \geq 1}$ be a sequence of finite subsets of $X$ such that
\begin{aufzi}
\item $(F_n)_{n \geq 1}$ is a canonically computable sequence of sets in $(X,\imath_1)$;
\item $\frac{\card F_n}{\log n} \to \infty$ as $n \to \infty$.
\end{aufzi}
Let $\omega \in \calF_{\Lambda}(X)$. Then
\begin{equation*}
\limsup\limits_{n \to \infty} \kca{\omega|_{F_n} \circ \imath_1^{-1}}{\imath_1(F_n)} = \limsup\limits_{n \to \infty} \kca{\omega|_{F_n} \circ \imath_2^{-1}}{\imath_2(F_n)},
\end{equation*}
i.e. asymptotic Kolmogorov complexity of $\omega$ does not change when we pass to an equivalent indexing.
\end{thm}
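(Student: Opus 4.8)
The plan is to reduce the statement to a comparison of two finite words of equal length that differ by an effectively computable permutation. Fix $\omega \in \calF_{\Lambda}(X)$ and put $N_n := \card F_n$. Writing $\imath_{\imath_j(F_n)}$ for the increasing bijection of the finite set $\imath_j(F_n)$ onto $\{1,\dots,N_n\}$, we have by definition $\kca{\omega|_{F_n}\circ\imath_j^{-1}}{\imath_j(F_n)} = \kcc{\aast}{w_n^{(j)}}/N_n$, where $w_n^{(j)} := \omega|_{F_n}\circ\imath_j^{-1}\circ\imath_{\imath_j(F_n)}^{-1}\in\Lambda^{\{1,\dots,N_n\}}$ is the word obtained by listing the values of $\omega$ on $F_n$ according to the $\imath_j$-order of indices. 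A direct comparison of the two enumerations of $F_n$ shows that $w_n^{(1)} = w_n^{(2)}\circ\tau_n$, where $\tau_n := \imath_{\imath_2(F_n)}\circ\imath_2\circ\imath_1^{-1}\circ\imath_{\imath_1(F_n)}^{-1}$ is a permutation of $\{1,\dots,N_n\}$.

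The essential point is that $\tau_n$ (hence so is $\tau_n^{-1}$) is computable from $n$. Indeed, canonical computability of $(F_n)_{n\geq 1}$ in $(X,\imath_1)$ lets us print the finite set $\imath_1(F_n)$ given $n$, so we know $N_n$ and the map $\imath_{\imath_1(F_n)}$; equivalence of the indexings makes the bijection $\imath_2\circ\imath_1^{-1}\colon\imath_1(X)\to\imath_2(X)$ computable, so we also obtain the finite set $\imath_2(F_n)$ and the order-preserving relabelling $\imath_{\imath_2(F_n)}$; composing these three computable pieces produces $\tau_n$.

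Next I would fix a decompressor $A$ which, on a binary input of the form $\widehat{\un}\,p$ — the simple prefix-free encoding $\widehat{\un}$ of an integer $n$ followed by a binary word $p$ — reads off $n$, computes $N_n$ and $\tau_n$ as above, runs $\aast$ on $p$, and, if $\aast(p)$ halts with a word $v\in\Lambda^{N_n}$, outputs $v\circ\tau_n$ (and is undefined otherwise). Applying $A$ to $\widehat{\un}\,p$ with $p$ a shortest $\aast$-description of $w_n^{(2)}$ yields $A(\widehat{\un}\,p) = w_n^{(1)}$, so by optimality of $\aast$ there is a constant $c$ depending only on $A$ with
\[
\kcc{\aast}{w_n^{(1)}} \;\leq\; l(\widehat{\un}) + \kcc{\aast}{w_n^{(2)}} + c .
\]
Dividing by $N_n$ and using $l(\widehat{\un}) = O(\log n)$ together with the hypothesis $\card F_n/\log n\to\infty$ (which also forces $N_n\to\infty$, so the constant term vanishes too), we obtain
\[
\limsup_{n\to\infty}\kca{\omega|_{F_n}\circ\imath_1^{-1}}{\imath_1(F_n)} \;\leq\; \limsup_{n\to\infty}\kca{\omega|_{F_n}\circ\imath_2^{-1}}{\imath_2(F_n)}.
\]
Since $\tau_n^{-1}$ is equally computable from $n$, the same construction with the roles of $\imath_1$ and $\imath_2$ exchanged gives the reverse inequality, hence equality.

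I expect the only delicate point to be the computability of $\tau_n$ from $n$ inside the decompressor, which is exactly where canonical computability — rather than mere computability — of $(F_n)_{n\geq 1}$ is indispensable: one must actually produce $\imath_1(F_n)$ as an explicit finite list in order to learn $N_n$, to sort it, and to push it forward under $\imath_2\circ\imath_1^{-1}$, none of which is effective from a computable indicator function alone. The role of the self-delimiting encoding $\widehat{\un}$ is the routine bookkeeping that lets $A$ recover the index $n$ and the payload $p$ from their concatenation unambiguously.
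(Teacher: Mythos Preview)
Your proposal is correct and follows essentially the same approach as the paper's proof: both observe that the two words $w_n^{(1)}$ and $w_n^{(2)}$ differ by a permutation $\tau_n$ that is computable from $n$ (using canonical computability of $(F_n)$ and the computable bijection $\imath_2\circ\imath_1^{-1}$), build a decompressor that parses a self-delimiting encoding of $n$, computes $\tau_n$, runs $\aast$ on the remaining input, and permutes the output, yielding an $O(\log n)$ overhead that vanishes after dividing by $\card F_n$. The only cosmetic differences are that the paper uses the doubling encoding $\overline{\un}$ rather than your $\widehat{\un}$, and leaves the reverse inequality implicit, whereas you spell out the symmetry via $\tau_n^{-1}$.
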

\begin{proof}
Since the indexing functions $\imath_1, \imath_2$ are equivalent, there is a computable bijection $\phi: \imath_2(X) \to \imath_1(X)$ such that $\phi(\imath_2(x)) = \imath_1(x)$ for all $x \in X$. Furthermore, the sequence $(F_n)_{n \geq 1}$ is canonically computable in $(X,\imath_2)$.

Let $n$ be fixed. By the definition,
\begin{align*}
\kca{\omega|_{F_n} \circ \imath_1^{-1}}{\imath_1(F_n)} = \frac{\kcc{\aast}{(\omega|_{F_n} \circ \imath_1^{-1}) \circ \imath_{\imath_1(F_n)}^{-1}}}{\card F_n},
\end{align*}
where $\omega|_{F_n} \circ \imath_1^{-1}$ is seen as a word on $\imath_1(F_n) \subseteq \N$ and $\widetilde \omega_1:=(\omega|_{F_n} \circ \imath_1^{-1}) \circ \imath_{\imath_1(F_n)}^{-1}$ is a word on $\{ 1,2,\dots,\card F_n \} \subseteq \N$. Let $\up_1$ be an optimal description of $(\omega|_{F_n} \circ \imath_1^{-1}) \circ \imath_{\imath_1(F_n)}^{-1}$. Similarly, $\widetilde \omega_2:=(\omega|_{F_n} \circ \imath_2^{-1}) \circ \imath_{\imath_2(F_n)}^{-1}$ is a word on $\{ 1,2,\dots,\card F_n \}$. It is clear that $\widetilde \omega_1$ is a permutation of $\widetilde \omega_2$, hence we can describe $\widetilde \omega_2$ by giving the description of $\widetilde \omega_1$ and saying how to permute it to obtain $\widetilde \omega_2$. We make this intuition formal below.

We define a new decompressor $A'$. The domain of definition of $A'$ consists of the programs of the form
\begin{equation}
\overline \ul \del \up,
\end{equation}
where $\overline \ul$ is a doubling encoding of an integer $l$ and $\up$ is an input for $\aast$. The decompressor works as follows. Compute the subsets $\imath_1(F_l)$ and $\imath_2(F_l)$ of $\N$. We let $\overline \phi$ be the element of $\sym{\card F_n}$ such that the diagram
\begin{displaymath}
    \xymatrix{ \imath_1(F_n) \ar[d]_{\imath_{\imath_1(F_n)}} & & \imath_2(F_n) \ar[d]^{\imath_{\imath_2(F_n)}} \ar[ll]^{\phi} \\
               \{ 1,2,\dots,\card F_n \}   & & \{ 1,2,\dots,\card F_n \} \ar[ll]^{\overline \phi} }
\end{displaymath}
commutes. We compute the word $\omega':=\aast(\up)$, and if $\card F_l
\neq l(\omega')$ the algorithm terminates without producing
output. Otherwise, the word $\omega' \circ \overline \phi$ is
printed. It follows that there is a constant $c$ such that the
following holds: for all $l \in \N$ and for all words $\omega'$ of
length $\card F_l$ we have
\begin{equation*}
\kcc{\aast}{\omega' \circ \overline \phi} \leq \kcc{\aast}{\omega'}+2 \log l+c,
\end{equation*}
where $\overline \phi$ is the permutation of $\{ 1,2,\dots,\card F_l\}$
defined above.

Finally, consider the program $\up':=\overline \un \del \up_1$, then
$A'(\up') = \widetilde \omega_2$. We deduce that
$\kcc{\aast}{\widetilde \omega_2} \leq \kcc{\aast}{\widetilde
  \omega_1} + 2 \log n + c$. The statement of the theorem follows trivially.
\end{proof}

To simplify the notation in the following sections, we adopt the
following convention. We say explicitly what indexing function we use
when introducing a computable space, but later, when the indexing is
fixed, we often omit the indexing function from the notation and think about computable spaces as computable subsets of $\N$. Words defined on subsets of a computable space become words defined on subsets of $\N$. This will help to simplify the notation without introducing much ambiguity.

\subsection{Computable Groups}
\label{ss.compgroup}
In this section we provide the definitions of a computable group and a few related notions, connecting results from algebra with computability. This section is based on \cite{rabin1960}.

Let $\Gamma$ be a group with respect to the multiplication operation
$\ast$. An indexing $\imath$ of $\Gamma$ is called \textbf{admissible}
if the function $\ast: (\Gamma,\imath) \times (\Gamma,\imath) \to
(\Gamma,\imath)$ is a computable function in the sense of Section
\ref{ss.compspaces}. A \textbf{computable group} is a pair
$(\Gamma,\imath)$ of a group $\Gamma$ and an admissible indexing
$\imath$. 

Of course, the groups $\Z^d$ and $\UT{d}{\Z}$ possess
`natural' admissible indexings. More precisely, for the group $\Z$ we fix the indexing
\begin{equation*}
\imath: n \mapsto 2|n| + \indi{n \geq 0},
\end{equation*}
which is admissible. Next, it is clear that for every $d > 1$ the group $\Z^d$ possesses an admissible
indexing function such that all coordinate projections onto $\Z$,
endowed with the indexing function $\imath$ above, are
computable. Similarly, for every $d \geq 2$ the group $\UT{d}{\Z}$
possesses an admissible indexing function such that for every pair of
indices $1 \leq i,j \leq d$
the evaluation function sending a matrix $g \in \UT{d}{\Z}$ to its
$(i,j)$-th entry is a computable function to $\Z$. We leave the details to the
reader. It does not matter which admissible indexing function of $\Z^d$ or
$\UT{d}{\Z}$ we use as long as it satisfies the conditions above, so
from now on we assume that this choice is fixed.

The following lemma from \cite{rabin1960} shows that in a computable group taking the inverse is also a computable operation.
\begin{lemma}
Let $(\Gamma, \imath)$ be a computable group. Then the function $\inv:
(\Gamma,\imath) \to (\Gamma,\imath)$, $ g \mapsto g^{-1}$ is computable.
\end{lemma}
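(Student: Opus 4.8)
The plan is to show that the map $\inv\colon g\mapsto g^{-1}$ is computable by exhibiting an algorithm that, given the index $\imath(g)$ of a group element $g$, searches through all indices until it finds the one representing $g^{-1}$, using only the already-established computability of the multiplication $\ast$ and the fact that $\imath(\Gamma)$ is a computable subset of $\N$.

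Concretely, I would first recall that, since $(\Gamma,\imath)$ is a computable group, the function $\widetilde{\ast}$ on $\imath(\Gamma)\times\imath(\Gamma)$ with $\widetilde{\ast}(\imath(g),\imath(h))=\imath(g\ast h)$ is computable (and total on $\imath(\Gamma)\times\imath(\Gamma)$). Let $e\in\Gamma$ denote the identity; its index $\imath(e)$ can be found effectively, for instance by taking any $g\in\Gamma$ and computing $\widetilde{\ast}(\imath(g),\imath(g))$ and comparing... actually more simply: $\imath(e)$ is the unique $m\in\imath(\Gamma)$ such that $\widetilde{\ast}(m,n)=n$ for the first $n\in\imath(\Gamma)$; one can locate it by an unbounded search over $\imath(\Gamma)$ (which is enumerable since it is computable), checking the property against a couple of witnesses, and since such an $m$ exists the search halts. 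Alternatively, and even more directly, $\imath(e)=\widetilde{\ast}(\imath(g),\imath(g^{-1}))$ will come out of the construction below, so we need not treat it separately.

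The core of the algorithm: given $\imath(g)$, enumerate the elements of the computable (hence enumerable) set $\imath(\Gamma)=\{n_1,n_2,\dots\}$ and for each candidate index $n_k$ compute $\widetilde{\ast}(\imath(g),n_k)$ and $\widetilde{\ast}(n_k,\imath(g))$. Check whether both equal $n_k$'s would-be identity — more precisely, whether $\widetilde{\ast}(\imath(g),n_k)=\widetilde{\ast}(n_k,n_k')$ is awkward; cleaner is: first compute $\imath(e)$ once and for all as above, then search for the unique $n_k\in\imath(\Gamma)$ with $\widetilde{\ast}(\imath(g),n_k)=\imath(e)$. Since $g^{-1}$ exists in $\Gamma$, its index $\imath(g^{-1})$ occurs somewhere in the enumeration and satisfies this equation; by uniqueness of inverses in a group it is the only such $n_k$, so the search halts and returns $\imath(g^{-1})$. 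This exhibits the function $\widetilde{\inv}$ on $\imath(\Gamma)$ determined by $\widetilde{\inv}(\imath(g))=\imath(g^{-1})$ as computable, which by the definition of a computable function between computable spaces is exactly what it means for $\inv\colon(\Gamma,\imath)\to(\Gamma,\imath)$ to be computable.

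The main (and really only) obstacle is the usual one with unbounded search: one must be sure the search actually terminates, which here is immediate because every element of a group has an inverse, so a witness $n_k=\imath(g^{-1})$ is guaranteed to appear in the enumeration of $\imath(\Gamma)$; and one must verify that $\imath(\Gamma)$ is enumerable so that the search can be carried out — this follows from $\imath$ being an indexing, i.e. $\imath(\Gamma)$ being a computable subset of $\N$, hence enumerable as noted in Section~\ref{ss.compcompl}. A minor point to handle carefully is obtaining $\imath(e)$ effectively; the cleanest route is to note $e=g\ast g^{-1}$ for any fixed $g$, but since that uses $g^{-1}$ circularly, instead observe that $\imath(e)$ can be found by searching $\imath(\Gamma)$ for the first $m$ with $\widetilde{\ast}(m,m)=m$ (the identity is the unique idempotent in a group), which halts for the same reason. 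Everything else is a routine composition of computable functions.
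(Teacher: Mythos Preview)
Your argument is correct: the unbounded search for the unique idempotent yields $\imath(e)$, and then the unbounded search for $n_k\in\imath(\Gamma)$ with $\widetilde{\ast}(\imath(g),n_k)=\imath(e)$ terminates and returns $\imath(g^{-1})$; this is exactly the standard proof. The paper itself does not give a proof of this lemma but simply cites Rabin \cite{rabin1960}, whose argument is essentially the one you have reconstructed.
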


$(\Gamma, \imath)$ is a computable space, and we can talk about computable subsets of $(\Gamma, \imath)$. A subgroup of $\Gamma$ which is a computable subset will be called a \textbf{computable subgroup}. A homomorphism between computable groups that is computable as a map between computable spaces will be called a \textbf{computable homomorphism}. The proof of the proposition below is straightforward.
\begin{prop}
Let $(\Gamma,\imath)$ be a computable group. Then the following assertions holds
\begin{aufziii}
\item Given a computable set $A \subseteq \Gamma$ and a group element $g \in \Gamma$, the sets $A^{-1}, gA$ and $Ag$ are computable;

\item Given a computable (resp. canonically computable) sequence
  $(F_n)_{n \geq 1}$ of subsets  of $\Gamma$ and a group element $g \in \Gamma$, the
  sequences $(g F_n)_{n \geq 1}, (F_n g)_{n \geq 1}$ are computable
  (resp. canonically computable).
\end{aufziii}
\end{prop}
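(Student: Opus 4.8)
The plan is to reduce both assertions to one elementary observation: for a fixed $g\in\Gamma$ the left translation $\lambda_g\colon(\Gamma,\imath)\to(\Gamma,\imath)$, $x\mapsto gx$, the right translation $\rho_g\colon x\mapsto xg$, and the inversion $\inv\colon x\mapsto x^{-1}$ are total computable bijections of the computable space $(\Gamma,\imath)$. For $\inv$ this is exactly the preceding lemma. For $\lambda_g$ I would argue as follows: in index coordinates $\widetilde{\lambda_g}(\imath(x))=\widetilde{\ast}(\imath(g),\imath(x))$, where $\widetilde{\ast}$ is the index-level multiplication, which is computable because $\imath$ is admissible; substituting the fixed natural number $\imath(g)$ into the first argument of a computable binary function yields a computable unary function, so $\lambda_g$ is a morphism of computable spaces. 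It is bijective with two-sided inverse $\lambda_{g^{-1}}$, which is computable for the same reason, and symmetrically for $\rho_g$ with inverse $\rho_{g^{-1}}$.

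Part (i) then falls out at once: $A^{-1}=\inv(A)$, $gA=\lambda_g(A)$ and $Ag=\rho_g(A)$, and each of $\inv,\lambda_g,\rho_g$ is a total computable bijection, so by the fact (recalled in Section \ref{ss.compcompl}) that the image of a computable set under a total computable bijection is computable, each of $A^{-1}$, $gA$, $Ag$ is computable whenever $A$ is. (One could equally write $gA=(\lambda_{g^{-1}})^{-1}(A)$ and invoke closure of computable sets under preimages by total computable maps.)

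For the first half of part (ii), recall that $(F_n)_{n\ge1}$ is a computable sequence precisely when $\indi{F_\cdot}\colon\N\times\Gamma\to\{0,1\}$, $(n,x)\mapsto\indi{F_n}(x)$, is computable. Since $x\in gF_n\Leftrightarrow g^{-1}x\in F_n$, we have $\indi{gF_n}(x)=\indi{F_n}(g^{-1}x)$, so the function $(n,x)\mapsto\indi{gF_n}(x)$ is the composition of the computable map $(n,x)\mapsto(n,\lambda_{g^{-1}}(x))$ (identity in the first coordinate, the computable map $\lambda_{g^{-1}}$ in the second) with $\indi{F_\cdot}$; as a composition of computable functions it is computable, so $(gF_n)_{n\ge1}$ is computable, and likewise $(F_ng)_{n\ge1}$ via $\indi{F_ng}(x)=\indi{F_n}(\rho_{g^{-1}}(x))$. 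For the canonically computable case, I would fix an algorithm that on input $n$ prints the finite set $\imath(F_n)\subseteq\N$ and halts; to produce $\imath(gF_n)$ one runs this algorithm, applies the total computable function $\widetilde{\lambda_g}$ to each of the finitely many indices obtained, and outputs the resulting finite set (equivalently its canonical index). This halts, so $(gF_n)_{n\ge1}$ is canonically computable, and symmetrically for $(F_ng)_{n\ge1}$ using $\widetilde{\rho_g}$.

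There is no genuine obstacle in this proposition; the only two points that need a moment's care are that fixing one argument of the binary index-level multiplication produces a computable unary function, and that applying a total computable function to each element of an effectively presented finite set is again an effective procedure. Everything else is bookkeeping with composition and preimages of computable functions.
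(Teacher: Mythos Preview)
Your proof is correct and is exactly the natural argument one would give. The paper in fact omits the proof entirely, stating only that it ``is straightforward''; your write-up is a faithful expansion of that remark, using precisely the tools the paper has set up (computability of $\inv$ from the preceding lemma, admissibility of $\imath$ for the translations, and the closure properties of computable sets recalled in Section~\ref{ss.compcompl}).
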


It is interesting to see that a computable version of the `First Isomorphism Theorem' also holds.
\begin{thm}
Let $(G,\imath)$ be a computable group and let $(H,\imath|_H)$ be a computable normal subgroup, where $\imath|_H$ is the restriction of the indexing function $\imath$ to $H$. Then there is a compatible indexing function $\imath'$ on the factor group $\fact G H$ such that the quotient map $\pi: (G,\imath) \to (\fact G H, \imath')$ is a computable homomorphism.
\end{thm}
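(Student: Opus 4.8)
The plan is to construct the indexing $\imath'$ on $\fact G H$ explicitly by choosing a canonical representative in each coset, and then verify that multiplication on the quotient is computable and that $\pi$ is a morphism. First I would use the fact that $H$ is a computable subset of the computable group $(G,\imath)$: the set $\imath(H) \subseteq \N$ is computable, so by Proposition 7.44 in \cite{hedman2004} it is enumerable, and hence so is its complement (as a co-infinite recursive set). The key observation is that for $g \in G$ one can decide membership $g \in H$ effectively (compute $\imath(g)$, run the decision procedure for $\imath(H)$), and since inversion and multiplication in $(G,\imath)$ are computable (by the preceding lemma and admissibility), the relation ``$g_1 H = g_2 H$'' is decidable: it holds iff $g_1^{-1} g_2 \in H$.

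Next I would define the representative of a coset $gH$ to be the element of $gH$ with the smallest index. This is well-defined and computable: given $g$, enumerate $G$ in order of increasing index $0,1,2,\dots$, and for each candidate $g_i$ test whether $g_i \in gH$ (equivalently $g^{-1} g_i \in H$), stopping at the first success; this halts because $gH$ is nonempty. Let $R \subseteq G$ be the set of these minimal representatives. Then $R$ is a computable subset of $(G,\imath)$, because $g \in R$ iff $g$ is the minimal-index element of $gH$, which is a decidable condition. Define $\imath'(gH) := \imath(r)$ where $r$ is the minimal representative of $gH$; this is an injection of $\fact G H$ into $\N$ whose image is $\imath(R)$, a computable set, so $\imath'$ is an indexing of the factor group.

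It then remains to check the two computability claims. For the quotient map $\pi\colon (G,\imath) \to (\fact G H, \imath')$, the induced map $\widetilde\pi$ on indices sends $\imath(g)$ to $\imath(r)$ where $r$ is the minimal representative of $gH$; this is exactly the algorithm described above, so $\widetilde\pi$ is computable and $\pi$ is a computable homomorphism. For multiplication on $\fact G H$, note that $(g_1 H)(g_2 H) = (g_1 g_2) H$, so given indices of two cosets, pick their (minimal) representatives $r_1, r_2$ from the indices via $\imath^{-1}|_R$ (computable since $\imath(R)$ is computable and $\imath$ is a computable bijection onto its image), compute $r_1 \ast r_2$ in $(G,\imath)$, and then apply $\widetilde\pi$ to obtain the index of the product coset. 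Composition of computable functions being computable, this shows $\imath'$ is admissible, i.e.\ a compatible indexing.

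The main obstacle, such as it is, is purely bookkeeping: one has to be careful that ``enumerate $G$ in order of increasing index and test membership'' genuinely terminates and that all the intermediate maps ($\imath^{-1}$ restricted to a computable image, the decision procedures for $\imath(H)$ and for $R$) are legitimately computable in the sense of Section \ref{ss.compspaces}. None of this is deep; the only place one must think is in verifying that $R$ is a \emph{computable} subset rather than merely enumerable, which follows because the minimality test involves only finitely many index comparisons below $\imath(g)$, each decidable. I would present the construction of $R$ and $\imath'$ first, then dispatch the admissibility of $\imath'$ and the computability of $\pi$ as short consequences.
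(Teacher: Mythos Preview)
Your argument is correct and is exactly the standard construction: pick the minimal-index representative in each coset, show the resulting set $R$ of representatives is decidable (because the minimality test below $\imath(g)$ is a finite conjunction of decidable membership tests), and transport the group structure through $R$. The paper itself does not give a proof at all but simply refers to Theorem~1 in \cite{rabin1960}; Rabin's proof there is precisely this minimal-representative construction, so your proposal matches the intended argument.

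Two very minor remarks on presentation. First, the aside about enumerability and ``co-infinite recursive set'' in your opening paragraph is unnecessary and slightly muddled; you never use enumerability of $\comp{H}$, only decidability of $H$, so drop that sentence. Second, when you write ``enumerate $G$ in order of increasing index $0,1,2,\dots$'' remember that $\imath(G)$ need not be all of $\N$; you should say you run through $n=0,1,2,\dots$, skip $n\notin\imath(G)$ (decidable), and test the remaining candidates. Neither point affects correctness.
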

For the proof we refer the reader to the Theorem 1 in \cite{rabin1960}.

\subsection{Computable F{\o}lner sequences and computable monotilings}
\label{ss.compfoln}
The notions of an amenable group and a F{\o}lner sequence are well-known, but, since we are working with computable groups, we need to develop their `computable' versions.

Let $(\Gamma,\imath)$ be a computable group. A left F{\o}lner monotiling  $([F_n, \calZ_n])_{n \geq 1}$ of $\Gamma$ is called \textbf{computable} if the following assertions hold
\begin{aufzi}
\item $(F_n)_{n \geq 1}$ is a canonically computable sequence of finite subsets of $\Gamma$;
\item $(\calZ_n)_{n \geq 1}$ is a computable sequence of subsets of $\Gamma$.
\end{aufzi}

First of all, let us show that the regular symmetric monotiling $([F_n,\calZ_n])_{n \geq
1}$ of $\Z^d$ from Example \ref{ex.zdmonotiling} is computable.
\begin{exa}
\label{ex.zdmonot}
Consider the group $\Z^d$ for some $d \geq 1$. We remind the reader
that it is endowed with an
admissible indexing such that all the coordinate projections $\Z^d \to
\Z$ are computable. Then the F{\o}lner sequence $F_n=
[0,1,2,\dots,n-1]^d$ is canonically computable. Furthermore, the
corresponding sets of centers equal $n \Z^d$ for every $n$, hence $([\calZ_n,F_n])_{n \geq 1}$ is a computable regular symmetric F{\o}lner monotiling.
\end{exa}

Next, we return to Example \ref{ex.hmonotiling}.
\begin{exa}
\label{ex.hcompmonot}
Consider the group $\UT{3}{\Z}$ and the monotiling $([F_n,\calZ_n])_{n
\geq 1}$ from Example \ref{ex.hmonotiling} given by
\begin{equation*}
\calZ_n=\{ (a,b,c) \in \UT{3}{\Z}: a,b \in n \Z, c \in n^2 \Z \}
\end{equation*}
and 
\begin{equation*}
F_n=\{ (a,b,c) \in \UT{3}{\Z}: 0 \leq a,b < n, 0 \leq c < n^2 \}
\end{equation*}
for every $n \geq 1$. We define the projections $\pi_1,\pi_2,\pi_3:
\UT{3}{\Z} \to \Z$ as follows. For every $g = (a,b,c) \in \UT{3}{\Z}$ we
let
\begin{align*}
&\pi_1(g) := a, \\
&\pi_2(g) := b, \\
&\pi_3(g) := c.
\end{align*}
The functions $\pi_1,\pi_2,\pi_3$ are computable. By definition, for every
$(n,g) \in \N \times \UT{3}{\Z}$ 
\begin{equation*}
\indi{\calZ_{\cdot}}(n,g) = 1 \Leftrightarrow (\pi_1(g) \in n \Z) \wedge
(\pi_2(g) \in n \Z) \wedge (\pi_3(g) \in n^2 \Z), 
\end{equation*}
hence the sequence of sets $(\calZ_n)_{n \geq 1}$ is computable. It is
also trivial to show that the sequence $(F_n)_{n \geq 1}$ is
canonically computable.

It follows that $([F_n,\calZ_n])_{n \geq 1}$ is a computable regular
symmetric F{\o}lner monotiling.
\end{exa}

In general, checking temperedness of a given canonically computable
F{\o}lner sequence is not trivial. Lindenstrauss in
\cite{lindenstrauss2001} proved that every F{\o}lner sequence has a
tempered F{\o}lner subsequence. Furthermore, the construction of a
tempered F{\o}lner subsequence from a given F{\o}lner sequence is
`algorithmic'. We provide his proof below, and we will use this result
later in this section when discussing F{\o}lner monotilings of
$\UT{d}{\Z}$ for $d > 3$.
\begin{prop}
\label{pr.tempsseq}
Let $(F_n)_{n \geq 1}$ be a canonically computable F{\o}lner sequence in a computable group $(\Gamma,\imath)$. Then there is a computable function $i \mapsto n_i$ s.t. the subsequence $(F_{n_i})_{i \geq 1}$ is a canonically computable tempered F{\o}lner subsequence.
\end{prop}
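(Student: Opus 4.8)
The idea is to run Lindenstrauss's greedy selection argument in the computable setting, making sure at each step that the choices one makes are effective. Recall the statement of temperedness: a sequence $(G_i)_{i \geq 1}$ is $C$-tempered if $\cntm{\bigcup_{i<j} G_i^{-1} G_j} < C \cntm{G_j}$ for every $j$. We will extract a subsequence $(F_{n_i})_{i \geq 1}$ with $C = 2$ (say), defining $n_i$ recursively.

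First I would set $n_1 := 1$. Suppose $n_1 < n_2 < \dots < n_{i-1}$ have been chosen and put $K := \bigcup_{j < i} F_{n_j}^{-1}$, a \emph{finite} subset of $\Gamma$ whose canonical index we can compute from $n_1,\dots,n_{i-1}$ (using that each $F_{n_j}$ is canonically computable and that in a computable group the operations of taking inverses and unions of finite sets are effective). Since $(F_n)_{n \geq 1}$ is a F{\o}lner sequence and $K$ is fixed and finite, we have $\cntm{K F_n \setminus F_n}/\cntm{F_n} \to 0$, hence there is some index $m > n_{i-1}$ with $\cntm{K F_m} < 2 \cntm{F_m}$; choose $n_i$ to be the least such $m$. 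The point is that this choice is \emph{computable}: given $n_1,\dots,n_{i-1}$, one computes $K$, then for $m = n_{i-1}+1, n_{i-1}+2, \dots$ in turn one computes the canonical index of $F_m$ and of $K F_m$ (the latter is a finite union of left translates $g F_m$, each canonically computable by the proposition on computable groups cited above), compares cardinalities, and halts at the first $m$ that works. This search terminates because the F{\o}lner property guarantees such an $m$ exists. Therefore $i \mapsto n_i$ is a (total) computable function, and $(F_{n_i})_{i \geq 1}$ is canonically computable as the composition of two computable maps (Proposition on computable functions / the remark that a computable reindexing of a canonically computable sequence is canonically computable).

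It remains to check that $(F_{n_i})_{i \geq 1}$ is indeed tempered and still F{\o}lner. It is F{\o}lner because any subsequence of a F{\o}lner sequence is F{\o}lner. For temperedness, fix $j$ and note that
\begin{equation*}
\bigcup_{i < j} F_{n_i}^{-1} F_{n_j} = \Bigl( \bigcup_{i<j} F_{n_i}^{-1} \Bigr) F_{n_j} = K F_{n_j},
\end{equation*}
where $K = \bigcup_{i<j} F_{n_i}^{-1}$ is exactly the finite set that was used when selecting $n_j$; by construction $\cntm{K F_{n_j}} < 2 \cntm{F_{n_j}}$, so the $2$-temperedness condition holds for every $j$.

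**Main obstacle.** The only genuinely delicate point is the \emph{effectivity} of the search for $n_i$: one must be sure that from a canonical index of $F_m$ and a finite list of group elements one can actually compute the canonical index of $KF_m$ and then its cardinality, and that the F{\o}lner property supplies a halting guarantee. Both are handled by the earlier material --- left translation preserves canonical computability, finite unions of canonically computable sets are canonically computable, and $\cntm{\cdot}$ of a set given by its canonical index is computable --- so there is no real difficulty, just bookkeeping. Everything else (temperedness of the extracted subsequence, preservation of the F{\o}lner property) is immediate from the construction.
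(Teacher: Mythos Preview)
Your proposal is correct and follows essentially the same approach as the paper: both run Lindenstrauss's greedy selection, choosing $n_i$ as the first index beyond the previously chosen ones for which the accumulated ``boundary set'' $\bigl(\bigcup_{j<i} F_{n_j}^{-1}\bigr) F_{n_i}$ has size at most $2\cntm{F_{n_i}}$, and both observe that this search is effective because the sets involved are canonically computable and the F{\o}lner property guarantees termination. Your stopping criterion $\cntm{KF_m} < 2\cntm{F_m}$ is in fact a slightly cleaner variant of the paper's symmetric-difference condition, and you are more explicit than the paper about why each step of the search is computable; otherwise the arguments are the same.
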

\begin{proof}
We define $n_i$ inductively as follows. Let $n_1:=1$. If $n_1,\dots,n_i$ have been determined, we set $\widetilde F_i:= \bigcup\limits_{j \leq i} F_{n_j}$. Take for $n_{i+1}$ the first integer greater than $i+1$ such that
\begin{equation*}
\cntm{F_{n_{i+1}}  \sdif \widetilde F_i^{-1} F_{n_{i+1}}} \leq \frac 1 {\cntm{\widetilde F_i}}.
\end{equation*}
The function $i \mapsto n_i$ is total computable. It follows that
\begin{equation*}
\cntm{ \bigcup\limits_{j \leq i} F_{n_j}^{-1} F_{n_{i+1}}} \leq 2 \cntm{ F_{n_{i+1}}},
\end{equation*}
hence the sequence $(F_{n_i})_{i \geq 1}$ is $2$-tempered. Since the F{\o}lner sequence $(F_n)_{n \geq 1}$ is canonically computable and the function $i \mapsto n_i$ is computable, the F{\o}lner sequence $(F_{n_i})_{i \geq 1}$ is canonically computable and tempered.
\end{proof}

In case of the discrete Heisenberg group $\UT{3}{\Z}$ we were able to
give simple formulas for the sequences $(F_n)_{n \geq 1}$ and
$(\calZ_n)_{n \geq 1}$, in particular, checking the computability was
trivial. This is no longer the case when $d>3$, and we will need the
following lemma to check the computability of the sequence $(\calZ_n)_{n \geq 1}$.
\begin{prop}
\label{prop.monoteq}
Let $(\Gamma, \imath)$ be a computable group. Let $([F_n,\calZ_n])_{n \geq 1}$ be a left F{\o}lner monotiling of $\Gamma$ such that $(F_n)_{n \geq 1}$ is a canonically computable sequence of finite sets and $\ue \in F_n$ for all $n \geq 1$. Then the following assertions are equivalent:
\begin{aufzii}
\item There is a total computable function $\phi: \N^2 \to \Gamma$ such that
\begin{equation*}
\calZ_n = \{ \phi(n,1),\phi(n,2), \dots\}
\end{equation*}
for every $n \geq 1$.
\item The sequence of sets $(\calZ_n)_{n \geq 1}$ is computable.
\end{aufzii}
\end{prop}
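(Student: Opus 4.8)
The plan is to establish the two implications separately; (ii) $\Rightarrow$ (i) is routine, while (i) $\Rightarrow$ (ii) is where the tiling hypothesis and the assumption $\ue \in F_n$ are genuinely used.

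First observe that $\calZ_n$ is infinite for every $n$: the translates $\{F_n z : z \in \calZ_n\}$ form a partition of $\Gamma$ into sets of cardinality $\cntm{F_n} < \infty$, so if $\calZ_n$ were finite then $\Gamma$ would be finite, contrary to our standing assumption. For (ii) $\Rightarrow$ (i), assume $\indi{\calZ_n}$ depends computably on $n$. Enumerate, for each fixed $n$, the elements of $\Gamma$ in increasing order of their indices (the index set $\imath(\Gamma)$ is a computable subset of $\N$, so this is effective) and output successively those $g$ with $\indi{\calZ_n}(g) = 1$. Since $\calZ_n$ is infinite this never runs dry, so the resulting map $\phi \colon \N^2 \to \Gamma$ is total and computable and satisfies $\calZ_n = \{\phi(n,1),\phi(n,2),\dots\}$.

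For (i) $\Rightarrow$ (ii), fix $(n,g) \in \N \times \Gamma$. By the tiling property there is a unique $z_0 \in \calZ_n$ with $g \in F_n z_0$; writing $g = f z_0$ with $f \in F_n$ gives $z_0 \in F_n^{-1} g$, and conversely any $z \in \calZ_n \cap F_n^{-1} g$ satisfies $g \in F_n z$ and hence $z = z_0$. Thus the finite set $C_{n,g} := F_n^{-1} g$ meets $\calZ_n$ in exactly the singleton $\{z_0\}$. Because $(F_n)_{n \geq 1}$ is canonically computable and inversion and multiplication in $(\Gamma,\imath)$ are computable (the preceding Lemma of \cite{rabin1960}), $C_{n,g}$ can be produced effectively from $(n,g)$. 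Now run the enumeration $\phi(n,1),\phi(n,2),\dots$ until the first $m$ with $\phi(n,m) \in C_{n,g}$; this search halts since $z_0 \in \calZ_n \cap C_{n,g}$, and as every value of $\phi(n,\cdot)$ lies in $\calZ_n$ while $C_{n,g} \cap \calZ_n = \{z_0\}$, necessarily $\phi(n,m) = z_0$. Finally, $\ue \in F_n$ forces $g \in F_n g$, so $g \in \calZ_n$ if and only if $z_0 = g$, and this equality is decidable because $\imath$ is injective. This describes an algorithm computing $(n,g) \mapsto \indi{\calZ_n}(g)$, so $(\calZ_n)_{n \geq 1}$ is computable.

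The only point requiring care is the identity $C_{n,g} \cap \calZ_n = \{z_0\}$ together with the use of $\ue \in F_n$ to read off membership in $\calZ_n$ from the recovered center $z_0$; once this is in place, the rest is bookkeeping built on the already-established computability of the group operations and of the tile sequence $(F_n)_{n\geq 1}$.
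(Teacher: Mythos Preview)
Your proof is correct and follows essentially the same approach as the paper. For (i)$\Rightarrow$(ii) the paper enumerates the centers $\phi(n,i)$ and for each computes the tile $F_n\phi(n,i)$, stopping once $g$ lands in one of them and answering according to whether $g$ equals the center; your version precomputes $C_{n,g}=F_n^{-1}g$ and searches for the first $\phi(n,i)$ lying in it, which is the dual reformulation (since $\phi(n,i)\in F_n^{-1}g$ iff $g\in F_n\phi(n,i)$), and your explicit identification $C_{n,g}\cap\calZ_n=\{z_0\}$ makes the correctness argument slightly cleaner.
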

\begin{proof}
One implication is clear. For the converse, note that to prove computability of the function $\indi{\calZ_{\cdot}}$ we have to devise an algorithm that, given $n\in \N$ and $g \in \Gamma$, decides whether $g \in \calZ_n$ or not. Let $\phi: \N^2 \to \Gamma$ be the function from assertion (i). Then the following algorithm answers the question. Start with $i:=1$ and compute $\ue \phi(n,i), h_{1,n} \phi(n,i),\dots, h_{k,n} \phi(n,i)$, where $F_n=\{ \ue, h_{1,n},\dots,h_{k,n}\}$. This is possible since $(F_n)_{n \geq 1}$ is a canonically computable sequence of finite sets. If $g = \ue \phi(n,i) $, then the answer is `Yes' and we stop the program. If $g =   h_{j,n} \phi(n,i)$ for some $j$, then the answer is `No' and we stop the program. If neither is true, then we set $i:=i+1$ and go to the beginning.

Since $\Gamma =  F_n \calZ_n$ for every $n$, the algorithm terminates for every input.
\end{proof}

In this last example we will explain, referring to the work
\cite{golodets2002} for details, why the groups $\UT{d}{\Z}$ for $d >
3$ have computable regular symmetric F{\o}lner monotilings as well.
\begin{exa}
\label{ex.utz}
Let $d$ be fixed. Let $u_{ij}$ be the matrix whose entry with the
indices $(i,j)$ is $1$, and where all the other entries are zero. Let
$T_{ij}:=I+u_{ij}$. Let $p$ be a prime number. For every $m$ consider
the subgroup $\calZ_m$ generated by $T_{ij}^{p^{m(j-i)}}$ for all
indices $(i,j)$, $i<j$. Then $\calZ_m$ is an enumerable subset.  There exists
a total computable function $\phi: \N^2
\to \UT{d}{\Z}$ such that
\begin{equation*}
\calZ_m = \{ \phi(m,1),\phi(m,2),\phi(m,3),\dots \}
\end{equation*}
for all $m \geq 1$.

$\calZ_m$ is finite a
index subgroup of $\UT{d}{\Z}$ for every $m$. The fundamental domain
$\rho_m$ for $\calZ_m$ can be written as
\begin{align*}
\rho_m:=&\{ T_{d-1,d}^{k_{d-1,d}} \cdots T_{1,d}^{k_{1,d}}: \\
&l_{d-1,d}(m) \leq k_{d-1,d} \leq L_{d-1,d}(m), \dots, l_{1,d}(m) \leq k_{1,d} \leq L_{1,d}(m)\},
\end{align*}
where
\begin{equation*}
l_{i,j}(m) = - \lfloor \frac{p^{m(j-i)}}{2} \rfloor, \ \ L_{i,j}(m) = \lfloor \frac{p^{m(j-i)} + 1}{2} \rfloor.
\end{equation*}
It is clear that the sequence of sets $m \mapsto \rho_m$ is
canonically computable. Furthermore, it is shown in
\cite{golodets2002} that $(\rho_m)_{m \geq 1}$ is a two-sided
F{\o}lner sequence. Computablity of the F{\o}lner monotiling
$([\rho_m,\calZ_m])_{m \geq 1}$ follows from Proposition \ref{prop.monoteq}.

The fact that the F{\o}lner monotiling
$([\rho_m,\calZ_m])_{m \geq 1}$ is symmetric is clear since $\calZ_m$
is a subgroup for every $m$. The fact that for each $m$ the function
$\indi{\calZ_m}$ is a good weight along a tempered subsequence of
$(\rho_m)_{m \geq 1}$ follows from Example
\ref{ex.regmonot}. It is clear that we can ensure the growth conditions by picking a subsequence $(n_i)_{i \geq 1}$ computably s.t. $([\rho_{n_i},\calZ_{n_i}])_{i\geq 1}$ is a computable regular symmetric F{\o}lner monotiling.
\end{exa}

\section{A theorem of Brudno}
\label{s.brudno}

We are now ready to prove the main theorem of this article. First, we will explain some definitions.

By a \textbf{subshift} $(\prX,\Gamma)$ we mean a closed
$\Gamma$-invariant subset $\prX$ of $\Lambda^{\Gamma}$, where
$\Lambda$ is the finite \textbf{alphabet} of $\prX$. The left action of the group $\Gamma$ on $\prX$ is given by
\begin{equation*}
(g\cdot \omega)(x) := \omega(x g) \ \ \ \forall x,g \in \Gamma, \omega \in \prX.
\end{equation*}
The words consisting of letters from the alphabet $\Lambda$ will be often called \textbf{$\Lambda$-words}. Of course, we can assume without loss of generality that $\Lambda = \{ 1,2,\dots,k\}$ for some $k$. When an invariant probability measure $\mu$ is fixed on $\prX$, we will often denote by $\bfX=(\prX,\mu,\Gamma)$ the associated measure-preserving system. 
We can associate a word presheaf $\calF_{\Lambda}$ to the subshift $(\prX,\Gamma)$ by setting
\begin{equation}
\calF_{\Lambda}(F):=\{ \omega|_F: \omega \in \prX\}.
\end{equation}
That is, $\calF_{\Lambda}(F)$ is the set of all restrictions of words in $\prX$ to the set $F$ for every computable $F$.

The main result of this article is
\begin{thm}
\label{thm.brudno}
Let $(\Gamma,\imath)$ be a computable group with a fixed computable regular symmetric F{\o}lner monotiling  $([F_n, \calZ_n])_{n \geq 1}$. Let $(\prX,\Gamma)$ be a subshift on $\Gamma$, $\mu \in \Probm_{\Gamma}(\prX)$ be an ergodic measure and $\bfX=(\prX,\mu,\Gamma)$ be the associated measure-preserving system. Then
\begin{equation*}
\kcl{\omega} = h(\bfX)
\end{equation*}
for $\mu$-a.e. $\omega \in \prX$, where the asymptotic complexity is computed with respect to the sequence $(F_n)_{n \geq 1}$.
\end{thm}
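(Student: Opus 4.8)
The plan is to prove the two inequalities $\kcl{\omega}\ge h(\bfX)$ and $\kcl{\omega}\le h(\bfX)$ separately, each for $\mu$-a.e.\ $\omega$. I would first fix the time-zero partition $\alpha=\{\,\{\omega:\omega(\ue)=a\}:a\in\Lambda\,\}$ of $\prX$. Since $\prX\subseteq\Lambda^\Gamma$ carries the product topology, $\alpha$ is a generator, so by the Kolmogorov--Sinai generator theorem $h(\bfX)=h_\mu(\alpha,\Gamma)=:h\le\log\cntm{\Lambda}$, and a direct computation with the action $(g\cdot\omega)(x)=\omega(xg)$ shows that for every finite $F\subseteq\Gamma$ the atom $\alpha^F(\omega)$ is the cylinder $\{\omega':\omega'|_F=\omega|_F\}$, so $I_{\alpha^{F}}(\omega)=-\log\mu([\omega|_F])$. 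This identifies the Kolmogorov-Sinai entropy with the combinatorics of the finite words $\omega|_{F_n}$.

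For the lower bound I would fix $\varepsilon>0$ and argue by counting. Distinct words in $\Lambda^{F_n}$ carry distinct shortest $\aast$-programs, so the set $G_n$ of $w\in\Lambda^{F_n}$ with $\uK(w,F_n)<\cntm{F_n}(h-\varepsilon)$ has at most $2^{\cntm{F_n}(h-\varepsilon)+1}$ elements. By Theorem \ref{thm.smb} (applicable because $(F_n)$ is tempered with $\cntm{F_n}/\log n\to\infty$), for $\mu$-a.e.\ $\omega$ one has $\mu([\omega|_{F_n}])\le 2^{-\cntm{F_n}(h-\varepsilon/2)}$ for all large $n$. Putting $B_n:=\{w\in G_n:\mu([w])\le 2^{-\cntm{F_n}(h-\varepsilon/2)}\}$ we get $\mu\bigl(\bigcup_{w\in B_n}[w]\bigr)\le 2^{1-\cntm{F_n}\varepsilon/2}\le n^{-2}$ for $n$ large (once more using $\cntm{F_n}/\log n\to\infty$), so Borel--Cantelli gives that $\mu$-a.e.\ $\omega$ satisfies $\omega|_{F_n}\in B_n$ for only finitely many $n$. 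Combining this with Theorem \ref{thm.smb} yields $\uK(\omega|_{F_n},F_n)\ge\cntm{F_n}(h-\varepsilon)$ for all large $n$, hence $\kcl{\omega}\ge h-\varepsilon$; intersecting over $\varepsilon=1/m$ gives $\kcl{\omega}\ge h$.

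For the upper bound I would use the monotiling and the weighted ergodic theorem to encode $\omega|_{F_n}$ block by block. Fix $\varepsilon>0$ and, by the Ornstein--Weiss lemma, pick $k$ with $\cntm{F_k}^{-1}h_\mu(\alpha^{F_k})<h+\varepsilon$ and $\cntm{F_k}^{-1}<\varepsilon$. The tiles $\{F_kz:z\in\calZ_k\}$ cover $\Gamma$ disjointly; since $\ue\in F_k$, the centers with $F_kz\subseteq F_n$ are exactly $\calZ_k\cap\kintl{F_k}{F_n}$, and by Proposition \ref{prop.fmonot} there are $(1+o(1))\cntm{F_n}/\cntm{F_k}$ of them, their tiles covering all of $F_n$ outside an $o(\cntm{F_n})$-fringe. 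For such a center the block $p_z:=(z\cdot\omega)|_{F_k}\in\Lambda^{F_k}$ records $\omega$ on $F_kz$, and by Theorem \ref{thm.wghtd} together with Proposition \ref{prop.fmonot} and ergodicity of $\bfX$, for $\mu$-a.e.\ $\omega$ the empirical distribution $\widehat\mu_{k,n}$ of $(p_z)_{z\in\calZ_k\cap\kintl{F_k}{F_n}}$ converges to the $\mu$-law of $\omega'\mapsto\omega'|_{F_k}$, so $H(\widehat\mu_{k,n})\to h_\mu(\alpha^{F_k})$ (Shannon entropy of a distribution on the finite set $\Lambda^{F_k}$). I would then build a decompressor $A$ whose inputs encode, via the simple prefix-free encoding, the integers $k$, $n$, the frequency table of $\widehat\mu_{k,n}$, the $\Lambda$-values on the fringe cells of $F_n$, and a Shannon code of $(p_z)_z$ (centers in increasing index order) relative to $\widehat\mu_{k,n}$. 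Given such an input, $A$ recomputes $\imath(F_n)$, $\imath(F_k)$ and $\calZ_k\cap\kintl{F_k}{F_n}$ — possible since $(F_n)$ is canonically computable, $(\calZ_n)$ is computable and $\Gamma$ is a computable group, and crucially requiring no knowledge of $\mu$ — reconstructs the tiling, decodes the blocks, writes in the fringe values, and transports the word onto $\{1,\dots,\cntm{F_n}\}$. Its input length is at most $(1+o(1))\tfrac{\cntm{F_n}}{\cntm{F_k}}\bigl(1+H(\widehat\mu_{k,n})\bigr)+O(\log n)$, so for $\mu$-a.e.\ $\omega$ we get $\overline{\uK}_A(\omega|_{F_n},F_n)\le h+3\varepsilon+o(1)$, hence $\widehat{\uK}_A(\omega)\le h+3\varepsilon$ and $\kcl{\omega}=\widehat{\uK}_{\aast}(\omega)\le\widehat{\uK}_A(\omega)\le h+3\varepsilon$ by \eqref{eq.optdeclim1}; intersecting over $\varepsilon=1/m$ gives $\kcl{\omega}\le h$, completing the proof.

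I expect the upper bound to be the main obstacle: the lower bound is a soft counting argument built on Shannon--McMillan--Breiman, but the upper bound forces one to produce an \emph{honest} decompressor, and this is where every structural hypothesis is spent — canonical computability of $(F_n)$ and computability of $(\calZ_n)$ (with computability of $\Gamma$) so that $A$ can rebuild the monotiling of $F_n$ combinatorially; the good-weight condition (this is where symmetry of the $\calZ_k$ and the two-sided Følner property are used, through Theorem \ref{thm.wghtd}) to secure $\mu$-a.e.\ convergence of the block statistics along $\calZ_k\cap\kintl{F_k}{F_n}$; and $\cntm{F_n}/\log n\to\infty$ to drown the $O(\log n)$ headers and the $o(\cntm{F_n})$ fringe in $o(\cntm{F_n})$. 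The one further point needing care is the order of the limits: $k=k(\varepsilon)$ must be frozen first, then $n\to\infty$ (so additive $O(1)$ constants vanish after dividing by $\cntm{F_n}$), and only afterwards $\varepsilon\to0$.
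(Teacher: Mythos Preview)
Your lower bound is fine and is essentially the paper's argument reorganized: the paper proves Theorem~\ref{thm.brudnogeq} by contradiction using Corollary~\ref{cor.smb}, while you use Shannon--McMillan--Breiman plus Borel--Cantelli directly; these are equivalent.

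The upper bound, however, has a real gap. You assert that ``by Theorem~\ref{thm.wghtd} together with Proposition~\ref{prop.fmonot} and ergodicity of $\bfX$, for $\mu$-a.e.\ $\omega$ the empirical distribution $\widehat\mu_{k,n}$ \dots\ converges to the $\mu$-law of $\omega'\mapsto\omega'|_{F_k}$''. Theorem~\ref{thm.wghtd} gives only \emph{existence} of the limit
\[
\pi_m^{k}(\omega)\;=\;\lim_{n\to\infty}\avg{g\in F_n\cap\calZ_k}\indi{B_m^k}(g\cdot\omega),
\]
not its value. Ergodicity of the $\Gamma$-action does not force $\pi_m^{k}(\omega)=\mu(B_m^k)$: averaging along the thin set $\calZ_k$ is governed by the (possibly non-ergodic) restricted $\calZ_k$-dynamics, and already for $\Gamma=\Z$, $\calZ_2=2\Z$ and the two-point ergodic subshift $\{(01)^\infty,(10)^\infty\}$ one gets $\pi^{2}(\omega)=\delta_{01}$ or $\delta_{10}$, not the uniform law. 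In general nothing prevents $H(\pi^{k,\ue}(\omega))>h_\mu(\alpha_\Lambda^{F_k})$ on a set of positive measure, so your Shannon-code length estimate does not yield $\kcl{\omega}\le h_\mu(\alpha_\Lambda^{F_k})/\cntm{F_k}$.

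The paper repairs exactly this point, and this is where the \emph{symmetry} of the monotiling and the two extra ingredients you did not mention are actually used. First it proves shift-invariance $\kcl{\omega}=\kcl{g\cdot\omega}$ (Theorem~\ref{thm.shiftinv}). Then Lemma~\ref{lem.brudno} exploits that $[\calZ_k,F_k^{-1}]$ is a \emph{right} monotiling (this is what symmetry buys) to show
\[
\frac{1}{\cntm{F_k}}\sum_{h\in F_k^{-1}}\pi_m^{k,h}(\omega)=\mu(B_m^k),
\]
so by concavity of Shannon entropy there exists, for each $k$ and a.e.\ $\omega$, some $h'=h_k(\omega)\in F_k^{-1}$ with $-\sum_m \pi_m^{k,h'}(\omega)\log\pi_m^{k,h'}(\omega)\le h_\mu(\alpha_\Lambda^{F_k})$. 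One then runs your frequency-encoding decompressor on $h'\cdot\omega$ rather than on $\omega$, obtaining $\kcl{h'\cdot\omega}\le h_\mu(\alpha_\Lambda^{F_k})/\cntm{F_k}$, and transfers back via shift-invariance. Without this detour through a well-chosen translate the argument does not close.
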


The proof is split into two parts, establishing respective inequalities in Theorems \ref{thm.brudnogeq} and \ref{thm.brudnoleq}. From now on, we follow the strategy of the original Brudno's paper \cite{brudno1982} more or less.

Given a subshift $\prX \subseteq \Lambda^{\Gamma}$ with an invariant measure $\mu$ on the alphabet $\Lambda=\{ 1,\dots,k\}$, we define the partition
\begin{equation*}
\alpha_{\Lambda}:=\{A_1,\dots,A_k\}, \ A_i:=\{ \omega \in \prX: \omega(\ue) = i\} \text{ for } i = 1,\dots,k.
\end{equation*}
Then $\alpha_{\Lambda}$ is, clearly, a generating partition. We will use the following well-known
\begin{prop}
Let $\prX \subseteq \Lambda^{\Gamma}$ be a subshift, $\mu$ be an invariant measure on $\prX$, $\bfX=(\prX,\mu,\Gamma)$ be the associated measure-preserving system and $\alpha_{\Lambda}$ be the partition defined above. Then
\begin{equation*}
h_{\mu}(\alpha_{\Lambda},\Gamma) = h(\bfX).
\end{equation*}
\end{prop}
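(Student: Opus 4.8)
The plan is to show that $\alpha_\Lambda$ is a generating partition and then invoke the Kolmogorov–Sinai theorem: if $\alpha$ is a generating partition for an ergodic (indeed, merely measure-preserving) system with an amenable acting group, then $h_\mu(\alpha,\Gamma) = h(\bfX)$. So the argument has two pieces: (1) identify $\alpha_\Lambda$ as generating, and (2) cite or reprove the appropriate version of the Kolmogorov–Sinai theorem for amenable group actions.

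First I would verify that $\alpha_\Lambda$ generates the Borel $\sigma$-algebra of $\prX$ modulo $\mu$-null sets. Concretely, for a finite $F \subseteq \Gamma$, the refined partition $\alpha_\Lambda^F = \bigvee_{g \in F} g^{-1}\alpha_\Lambda$ has atoms indexed by the possible values of $\omega$ on $F$: since $(g^{-1}\cdot\omega)(\ue) = \omega(\ue g) = \omega(g)$, the atom of $\alpha_\Lambda^F$ containing $\omega$ is exactly $\{\omega' \in \prX : \omega'|_F = \omega|_F\}$, a cylinder set. As $F$ ranges over all finite subsets of $\Gamma$, these cylinders generate the product topology hence the Borel $\sigma$-algebra of $\Lambda^\Gamma$, so $\alpha_\Lambda$ is generating. (Here one uses that $\Gamma$ is countable, so the whole $\sigma$-algebra is generated by the countably many cylinders on finite sets.)

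Next I would quote the Kolmogorov–Sinai generator theorem in the amenable setting: for a measure-preserving action of a countable amenable group $\Gamma$ on $(\prX,\mu)$, if $\alpha$ is a finite partition whose iterates $\{g^{-1}\alpha : g \in \Gamma\}$ generate the $\sigma$-algebra mod $\mu$, then $h_\mu(\alpha,\Gamma) = h(\bfX)$. This is standard (it follows, e.g., from the monotonicity $h_\mu(\alpha,\Gamma) \le h_\mu(\beta,\Gamma)$ when $\alpha \preceq \beta$ together with the Kolmogorov–Sinai argument that $h_\mu(\beta,\Gamma) \le h_\mu(\alpha,\Gamma) + H_\mu(\beta \mid \alpha^{F})$ for suitable large finite $F$, using the Ornstein–Weiss machinery already invoked in the paper for the existence of $h_\mu(\alpha,\Gamma)$). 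One references the standard literature — e.g. Glasner's book, or Kerr–Li — for this.

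The only mildly delicate point, and the one I would treat most carefully, is the direction $h(\bfX) \le h_\mu(\alpha_\Lambda,\Gamma)$, i.e. that no other partition does better. The clean way is: given any finite partition $\beta$ and $\varepsilon > 0$, by the generating property there is a finite $F \subseteq \Gamma$ and a partition $\beta'$ that is $\alpha_\Lambda^F$-measurable with $H_\mu(\beta \mid \beta') < \varepsilon$; then standard subadditivity and the conditional-entropy estimate give $h_\mu(\beta,\Gamma) \le h_\mu(\alpha_\Lambda^F, \Gamma) + \varepsilon = h_\mu(\alpha_\Lambda,\Gamma) + \varepsilon$, where the last equality uses $h_\mu(\alpha^F,\Gamma) = h_\mu(\alpha,\Gamma)$ (another consequence of the Ornstein–Weiss lemma / Følner averaging). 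Taking the supremum over $\beta$ and letting $\varepsilon \to 0$ yields the claim. Since all of this is classical, in the write-up I would keep it to a sentence or two and point to the literature rather than reprove it.
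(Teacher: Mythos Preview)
Your proposal is correct and matches the paper's approach: the paper does not actually prove this proposition but simply labels it ``well-known'' after remarking that $\alpha_{\Lambda}$ is ``clearly, a generating partition.'' Your write-up supplies exactly the details the paper omits (identifying the atoms of $\alpha_{\Lambda}^F$ with cylinder sets and invoking the Kolmogorov--Sinai generator theorem for amenable group actions), so it is entirely in line with---indeed more explicit than---what the paper intends.
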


Given a word $\omega \in \prX$ and a finite subset $F \subseteq \Gamma$, we will set $X_F(\omega):=\{ \sigma \in \prX: \sigma|_F = \omega|_F \}$, i.e. $X_F(\omega)$ is the cylinder of all words in $\prX$ that coincide with $\omega$ when restricted to $F$. Note that
\begin{equation}
X_F(\omega) = \left(\bigvee\limits_{g \in F} g^{-1} \alpha_{\Lambda}\right)(\omega) = \alpha_{\Lambda}^F(\omega),
\end{equation}
i.e. the cylinder set $X_F(\omega)$ is precisely the atom of the partition $\alpha_{\Lambda}^F$ that contains $\omega$.

The alphabet $\Lambda$ is finite, so we encode each letter of $\Lambda$ using precisely $\lfloor \log \card \Lambda \rfloor+1$ bits. Then binary words of length $N\left(\lfloor \log \card \Lambda \rfloor+1 \right)$ are unambiguously interpreted as $\Lambda$-words of length $N$.

\subsection{Part A}
\label{ss.parta}
The first step is proving that the Kolmogorov complexity of a word
over $\Gamma$ is shift-invariant. In the proof below it will become
apparent why we need computability structure on the group and why we
require the F{\o}lner sequence to be computable. In the proof below we
follow the convention suggested at the end of Section
\ref{ss.compspaces}, i.e. we view $\Gamma$ as a computable subset of $\N$ such that the multiplication is computable.

\begin{thm}[Shift invariance]
\label{thm.shiftinv}
Let $(\Gamma, \imath)$ be a computable amenable group with a fixed canonically computable right F{\o}lner sequence $(F_n)_{n \geq 1}$ such that $\frac{\cntm{F_n}}{\log n} \to \infty$ as $n \to \infty$. Let $(\prX,\Gamma)$ be a subshift and $\omega \in \prX$ be a word on $\Gamma$. Then for every $g \in \Gamma$
\begin{equation*}
\widehat \uK(\omega) = \widehat \uK(g \cdot \omega),
\end{equation*}
where the asymptotic complexity is computed with respect to the sequence $(F_n)_{n \geq 1}$.
\end{thm}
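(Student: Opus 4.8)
The plan is to follow the decompressor–construction strategy used in the proof of Theorem~\ref{thm.asinv}. Fix $g\in\Gamma$. Since $(g\cdot\omega)(x)=\omega(xg)$ for all $x$, the word $(g\cdot\omega)|_{F_n}$ carries exactly the same information as the collection of values of $\omega$ on the translated set $F_n g$. Once we are given $\omega|_{F_n}$ we already know $\omega$ on $F_n g\cap F_n$, so the only extra data needed to reconstruct $(g\cdot\omega)|_{F_n}$ are the values of $\omega$ on the overhang $E_n:=F_n g\setminus F_n$, together with the index $n$. Because $(F_n)_{n\ge1}$ is a right F{\o}lner sequence, applying the defining condition with $K=\{g\}$ gives $\cntm{F_n\sdif F_n g}/\cntm{F_n}\to0$, hence
\begin{equation*}
\frac{\cntm{E_n}}{\cntm{F_n}}\le\frac{\cntm{F_n\sdif F_n g}}{\cntm{F_n}}\longrightarrow 0,
\end{equation*}
so this overhang contributes only sublinearly many letters.

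The construction: I would introduce an auxiliary decompressor $A'_g$ whose admissible inputs have the form $\widehat\un\,\up$, where $\widehat\un$ is the simple prefix-free encoding of an integer $n$ and $\up$ is the remaining bit string. On such an input $A'_g$ first recovers $n$ (legitimate, as $\widehat\un$ is self-delimiting), then uses canonical computability of $(F_n)_{n\ge1}$ together with computability of the group multiplication to compute $F_n$ and $F_n g$ explicitly (as finite lists of indices), the increasing enumerations of $F_n$ and of $E_n=F_n g\setminus F_n$, and in particular the number $m_n:=\cntm{E_n}$. It then reads the first $m_n(\lfloor\log\cntm{\Lambda}\rfloor+1)$ bits of $\up$ as an $m_n$-letter $\Lambda$-word $v$ (intended to list the values of $\omega$ on $E_n$ in increasing order of index), feeds the rest of $\up$ to $\aast$, obtains a $\Lambda$-word $w$, and aborts unless $l(w)=\cntm{F_n}$. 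Finally it outputs the $\Lambda$-word of length $\cntm{F_n}$ whose letter at the rank of $x$ in $F_n$ equals the letter of $w$ at the rank of $xg$ in $F_n$ when $xg\in F_n$, and the letter of $v$ at the rank of $xg$ in $E_n$ otherwise. All steps are effective because $(\Gamma,\imath)$ is a computable group and $(F_n)_{n\ge1}$ is canonically computable, so $A'_g$ is a genuine decompressor.

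Now observe that if we feed $A'_g$ the program $\widehat\un\,v_n\,\up_n$, where $v_n$ encodes the true values of $\omega$ on $E_n$ and $\up_n$ is an optimal $\aast$-description of $\omega|_{F_n}$ (viewed as a word on $\{1,\dots,\cntm{F_n}\}$ via the increasing enumeration of $F_n$), then the output is precisely $(g\cdot\omega)|_{F_n}$ in the same representation. Hence, by optimality of $\aast$, there is a constant $c_g$ with
\begin{equation*}
\uK\bigl((g\cdot\omega)|_{F_n},F_n\bigr)\le\uK\bigl(\omega|_{F_n},F_n\bigr)+m_n\bigl(\lfloor\log\cntm{\Lambda}\rfloor+1\bigr)+l(\widehat\un)+c_g
\end{equation*}
for all $n$. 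Dividing by $\cntm{F_n}$: the term $m_n(\lfloor\log\cntm{\Lambda}\rfloor+1)/\cntm{F_n}$ tends to $0$ by the F{\o}lner property just noted, and $(l(\widehat\un)+c_g)/\cntm{F_n}$ tends to $0$ because $l(\widehat\un)=O(\log n)$ while $\cntm{F_n}/\log n\to\infty$. Taking $\limsup_{n\to\infty}$ of $\overline{\uK}((g\cdot\omega)|_{F_n},F_n)\le\overline{\uK}(\omega|_{F_n},F_n)+o(1)$ gives $\widehat{\uK}(g\cdot\omega)\le\widehat{\uK}(\omega)$. Applying the same estimate to $g^{-1}$ in place of $g$ and to $g\cdot\omega$ in place of $\omega$, and using $g^{-1}\cdot(g\cdot\omega)=\omega$, yields the reverse inequality, so $\widehat{\uK}(\omega)=\widehat{\uK}(g\cdot\omega)$.

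The real work lies in the bookkeeping for $A'_g$ rather than in any idea: one must ensure that the decompressor reconstructs exactly the ordering conventions under which the overhang list $v_n$ and the description $\up_n$ were produced, that the self-delimiting prefix $\widehat\un$ is parsed unambiguously from $\up$, and that every length count (that of $\widehat\un$, the $m_n$-letter block, and the check $l(w)=\cntm{F_n}$) is exact — precisely as in the proof of Theorem~\ref{thm.asinv}. It is also worth recording that the constant $c_g$ is allowed to depend on the (fixed) element $g$; this is exactly where computability of the group and canonical computability of the F{\o}lner sequence are genuinely used, since without them $A'_g$ would not be a decompressor at all.
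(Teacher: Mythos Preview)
Your proof is correct and follows essentially the same strategy as the paper's: build an auxiliary decompressor that, from $n$, an optimal $\aast$-description of $\omega|_{F_n}$, and the list of values of $\omega$ on the overhang $F_n g\setminus F_n$, reconstructs $(g\cdot\omega)|_{F_n}$; then use the right F{\o}lner property and the growth condition $\cntm{F_n}/\log n\to\infty$ to kill the extra terms. The only cosmetic difference is that the paper passes the index of $g$ as part of the program (obtaining a single decompressor $A^\dag$ and a constant independent of $g$), whereas you hard-code $g$ into $A'_g$ and accept a $g$-dependent constant $c_g$; since $g$ is fixed throughout, both choices work equally well.
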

\begin{proof}
We will prove the following claim: for arbitrary $g \in \Gamma$
\begin{equation*}
\widehat \uK(g \cdot \omega) = \limsup\limits_{n \to \infty} \frac{\kcc{\aast}{(g \cdot \omega)|_{F_n} \circ \imath_{F_n}^{-1}}}{\cntm{F_n}} \leq \kcl{\omega}.
\end{equation*}
It is trivial to see that the statement of the theorem follows from this claim. Speaking informally, our idea behind the proof of the claim is that the sets $F_n$ and $F_n g^{-1}$ are almost identical for large enough $n$. The word $(g \cdot \omega)|_{F_n \cap F_n g^{-1}}$ can be encoded using the knowledge of the word $\omega|_{F_n}$ and the \emph{computable} action by $g$ that `permutes'  a part of the word $\omega|_{F_n}$. To encode the word $(g \cdot \omega)|_{F_n}$ we also need to treat the part outside the intersection. We use the fact that our F{\o}lner sequence is computable, i.e. there is an algorithm that, given $n$, will print the set $F_n$. But then we also know the remainder $F_n \setminus F_n g^{-1}$, which is endowed with the ambient numbering of $\Gamma \subseteq \N$. Hence we can simply list additionally the $\cntm{F_n \setminus F_n g^{-1}}$ corrections we need to make, which takes little space compared to $\cntm{F_n}$. Together this implies that the complexity of $(g \cdot \omega) |_{F_n}$ can be asymptotically bounded by the complexity of $\omega|_{F_n}$. Below we make this intuition formal.

Recall that $\aast$ is a fixed asymptotically optimal decompressor in the definition of Kolmogorov complexity $\uK$. We now introduce a new decompressor $\adag$ on the domain of programs of the form
\begin{equation}
\label{eq.aprinp}
\overline{\us} \del \uw \del \overline \un \del \overline \um \del \up,
\end{equation}
where $\overline \us$ is a doubling encoding of a nonnegative integer $s$, and $\uw$ is a binary encoding of a $\Lambda$-word $\upsilon$ of length $s$, hence $l(\uw) = s (\lfloor \log \card \Lambda \rfloor + 1)$. Next, $\overline \un$ and $\overline \um$ are doubling encodings of some natural numbers $n,m$. The remainder $\up$ is required to be a valid input for $\aast$. Observe that programs of this form (Equation \ref{eq.aprinp}) are unambiguously interpreted.

Decompressor $\adag$ is defined as follows. Let $g:=g_{m}$ be the
element of the computable group $(\Gamma,\imath)$ with index $m$, and
let $F:=F_n$ be the $n$-th element of the canonically computable
F{\o}lner sequence $(F_n)_{n \geq 1}$. We compute the set $D:=F
\setminus  F g^{-1}$, which is seen as a subset of $\N$ with induced
ordering. Further, we compute the word $\widetilde
\omega_1:=\aast(\up)$. The increasing bijection $\imath_F: F \to \{1,2,\dots, \cntm{F} \}$  maps the subsets $F \cap F g^{-1} $ and $F g \cap F$ of $F$ to subsets $Y_1,Y_2$ of $\{ 1,2,\dots, \cntm{F}\}$. The right multiplication $R_g$ on $\Gamma$ is computable and restricts to a bijection from $F \cap  F g^{-1}$ to $F g \cap F$, so let $\widetilde{R_g}$ be the bijection making the diagram
\begin{displaymath}
    \xymatrix{
        {F \cap F g^{-1} } \ar[rr]^{\imath_{F}} \ar[d]_{R_g} & & Y_1 \ar[d]^{\widetilde{R_g}} \\
        {F g \cap F} \ar[rr]^{\imath_{F}}       & & Y_2  }
\end{displaymath}
commute. The output of $\adag$ is produced as follows. For  $x \in Y_1
\subseteq \{ 1, 2, \dots, \cntm{F}\}$ we set $\widetilde \omega_2(x):=
\widetilde \omega_1(\widetilde{R_g}(x))$, and the algorithm terminates
without producing output if $\widetilde{R_g}(x) > l(\widetilde
\omega_1)$ for some $x$. It is left to describe $\widetilde \omega_2$
on the remainder $Y_0:=\{ 1, 2, \dots, \cntm{F}\} \setminus Y_1$. We
let $\widetilde \omega_2|_{Y_0}:=\upsilon \circ \imath_{Y_0}$, where
$\imath_{Y_0}: Y_0 \to \{ 1,2,\dots, \card Y_0\}$ is an increasing
bijection. The algorithm prints nothing and terminates if $\card Y_0
\neq s$, otherwise the word $\widetilde \omega_2$ is printed.

Let $(g \cdot \omega)|_{F_n} \circ \imath_{F_n}^{-1}$ be the word on
$\{ 1,2,\dots, \cntm{F_n}\}$ that we want to encode, where $g \in
\Gamma$ has index $m$. Let $\up_n$ be an optimal description for
$\omega|_{F_n} \circ \imath_{F_n}^{-1}$ with respect to $\aast$. Let
$\upsilon$ be the word $(g \cdot \omega)|_{F_n \setminus F_n g^{-1}}
\circ \imath_{F_n \setminus F_n g^{-1}}^{-1}$. To encode the word $(g
\cdot \omega)|_{F_n} \circ \imath_{F_n}^{-1}$ using $\adag$, consider the program
\begin{equation*}
\widetilde \up_n:=\overline{\us} \del \uw \del \overline \un \del \overline \um \del \up_n,
\end{equation*}
where $\uw$ is the binary encoding of the $\Lambda$-word $\upsilon$ and $s = \cntm{F_n \setminus F_n g^{-1}}$. It is trivial to see that $\adag(\widetilde \up_n) = (g \cdot \omega)|_{F_n} \circ \imath_{F_n}^{-1}$.

The length of the program $\widetilde \up_n$ can be estimated by
\begin{equation*}
l(\widetilde \up_n) \leq \cntm{ F_n \setminus F_n g^{-1} } (\log{\card \Lambda}+1) + 2 \log\cntm{ F_n \setminus F_n g^{-1} } +c+ 2\log n + 2 \log m +l(\up_n),
\end{equation*}
where $c$ is some constant. By the definition of complexity of sections
\begin{equation*}
\widehat \uK(g \cdot \omega) = \limsup\limits_{n \to \infty} \frac{\kcc{\aast}{(g \cdot \omega)|_{F_n} \circ \imath_{F_n}^{-1}}}{\cntm{F_n}}.
\end{equation*}
Using that the optimal decompressor $\aast$ is not worse than $\adag$ (Equation \ref{eq.optdecomp}), we conclude that
\begin{align*}
&\kcc{\aast}{(g \cdot \omega)|_{F_n} \circ \imath_{F_n}^{-1}} \leq \kcc{\adag}{(g \cdot \omega)|_{F_n} \circ \imath_{F_n}^{-1}} + C \leq \\
&\leq \cntm{ F_n \setminus g^{-1} F_n } \cdot (\log{\card \Lambda}+1)+2 \log\cntm{ F_n \setminus F_n g^{-1} }+2\log n + l(\up_n) + C'
\end{align*}
for some constants $C,C'$ independent of $n$ and $\omega$. Taking the limits yields
\begin{equation*}
\limsup\limits_{n \to \infty} \frac{\kcc{\aast}{(g \cdot \omega)|_{F_n} \circ \imath_{F_n}^{-1}}}{\cntm{F_n}} \leq \limsup\limits_{n \to \infty} \frac{\kcc{\aast}{\omega|_{F_n} \circ \imath_{F_n}^{-1}}}{\cntm{F_n}}.
\end{equation*}
This completes the proof of the claim, and therefore the proof of the theorem.
\end{proof}

Of course, in the proof above we have not used that $\prX$ is closed. From now on we will omit explicit reference to the sequence $(F_n)_{n \geq 1}$ when talking about $\widehat \uK$. The proof of the following proposition is essentially similar to the original one in \cite{brudno1982}.

\begin{prop}
Let $(\Gamma, \imath)$ be a computable amenable group with a fixed canonically computable right F{\o}lner sequence $(F_n)_{n \geq 1}$ such that $\frac{\cntm{F_n}}{\log n} \to \infty$ as $n \to \infty$. Let $(\prX,\Gamma)$ be a subshift. For every $t \in \R_{\geq 0}$ the sets
\begin{align*}
&E_t:=\{ \omega \in \prX: \kcl{\omega} =t \},\\
&L_t:=\{ \omega \in \prX: \kcl{\omega} <t \},\\
&G_t:=\{ \omega \in \prX: \kcl{\omega} >t \}
\end{align*}
are measurable and shift-invariant.
\end{prop}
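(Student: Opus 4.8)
The plan is to reduce everything to a single observation: the function $\Phi\colon \prX \to \R_{\geq 0}$ defined by $\Phi(\omega) := \kcl{\omega} = \limsup_{n\to\infty}\kca{\omega|_{F_n}}{F_n}$ is Borel measurable and $\Gamma$-invariant. Granting this, $E_t = \Phi^{-1}(\{t\})$, $L_t = \Phi^{-1}([0,t))$ and $G_t = \Phi^{-1}((t,\infty))$ are preimages of Borel subsets of $\R$, hence measurable, and invariance of $\Phi$ gives $g\cdot E_t = E_t$, $g\cdot L_t = L_t$, $g\cdot G_t = G_t$ for every $g \in \Gamma$, which is the asserted shift-invariance.

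For measurability, fix $n$. The restriction map $\prX \to \Lambda^{F_n}$, $\omega \mapsto \omega|_{F_n}$, is continuous (it reads off finitely many coordinates), and its target $\Lambda^{F_n}$ is a finite set, on which every function is Borel. Since $\kca{\cdot}{F_n}$ depends on $\omega$ only through $\omega|_{F_n}$, the map $\omega \mapsto \kca{\omega|_{F_n}}{F_n}$ is a finitely-valued Borel function, in fact constant on each cylinder determined by the coordinates in $F_n$. These functions are moreover uniformly bounded (the mean complexity of a $\Lambda$-word of length $N$ is at most $\log\card\Lambda + 1$ up to an error of order $(\log N)/N$, using the trivial description that encodes the word letter by letter), so their pointwise $\limsup$ is a real-valued Borel function. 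Hence $\Phi$ is Borel measurable.

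For shift-invariance we simply invoke Theorem \ref{thm.shiftinv}: the hypotheses imposed on $(\Gamma,\imath)$ and on $(F_n)_{n\geq 1}$ in the present proposition (a canonically computable right F{\o}lner sequence with $\cntm{F_n}/\log n \to \infty$) are exactly those of that theorem, so $\kcl{g\cdot\omega} = \kcl{\omega}$ for all $g \in \Gamma$ and all $\omega \in \prX$; that is, $\Phi$ is $\Gamma$-invariant. I do not expect a genuine obstacle here: the one point that deserves care is verifying that each finite-stage mean complexity $\kca{\cdot}{F_n}$ is measurable, and this is immediate once one notes that it factors through the finite (hence discrete) set $\Lambda^{F_n}$ of possible restrictions; the remainder is the standard stability of Borel measurability under $\limsup$ together with a direct citation of the already-established shift-invariance theorem.
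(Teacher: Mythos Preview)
Your proof is correct and essentially the same as the paper's. The paper invokes the shift-invariance theorem for invariance, and for measurability writes out explicitly
\[
L_t=\bigcup_{k \geq 1} \bigcup_{N \geq 1} \bigcap_{n > N}\{ \omega: \kcc{\aast}{\omega|_{F_n} \circ \imath_{F_n}^{-1}}<(t- \tfrac 1 k)\cntm{F_n}\},
\]
noting that the innermost sets are finite unions of cylinders; your packaging via the single Borel function $\Phi$ is a cleaner way of saying the same thing and handles $E_t$, $L_t$, $G_t$ simultaneously rather than ``in a similar manner''.
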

\begin{proof}
Invariance of the sets above follows from the previous proposition. We will now prove that the set $L_t$ is measurable, the measurability of other sets is proved in a similar manner. Observe that
\begin{align*}
L_t:=\{ \omega: \kcl{\omega} < t\} = \bigcup\limits_{k \geq 1} \bigcup\limits_{N \geq 1} \bigcap\limits_{n > N}\{ \omega: \kcc{\aast}{\omega|_{F_n} \circ \imath_{F_n}^{-1}}<(t- \frac 1 k)\cntm{F_n}\},
\end{align*}
and the sets $\{ \omega: \kcc{\aast}{\omega|_{F_n} \circ \imath_{F_n}^{-1}}<(t- \frac 1 k)\cntm{F_n} \}$ are measurable as finite unions of cylinder sets.
\end{proof}

We are now ready to prove the first inequality. The proof below is a slight adaption of the original one from \cite{brudno1982}.
\begin{thm}
\label{thm.brudnogeq}
Let $(\Gamma,\imath)$ be a computable group with a canonically computable tempered two-sided F{\o}lner sequence $(F_n)_{n \geq 1}$ such that $\frac{\cntm{F_n}}{\log n} \to \infty$. Let $(\prX,\Gamma)$ be a subshift on $\Gamma$, $\mu \in \Probm_{\Gamma}(\prX)$ be an ergodic $\Gamma$-invariant probability measure, and $\bfX=(\prX,\mu,\Gamma)$ be the associated measure-preserving system. Then $\kcl{\omega} \geq h(\bfX)$ for $\mu$-a.e. $\omega$.
\end{thm}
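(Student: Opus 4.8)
The plan is to argue by contradiction, playing a crude counting bound on low‑complexity words against the Shannon--McMillan--Breiman estimate. Write $h:=h(\bfX)$; by the Proposition identifying $h(\bfX)$ with $h_{\mu}(\alpha_{\Lambda},\Gamma)$ (where $\alpha_{\Lambda}$ is the generating partition introduced before the theorem) we may work with $h=h_{\mu}(\alpha_{\Lambda},\Gamma)$. If $h=0$ there is nothing to prove, since $\kcl{\omega}\geq 0$ always, so assume $h>0$ and suppose for contradiction that $\mu(\{\omega:\kcl{\omega}<h\})>0$. Since $\{\kcl{\cdot}<h\}=\bigcup_{k\geq 1}L_{h-1/k}$ and each $L_t$ is measurable and $\Gamma$-invariant by the preceding Proposition, ergodicity of $\mu$ forces $\mu(L_{h-\varepsilon})=1$ for some $\varepsilon>0$. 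Fix such an $\varepsilon$.

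Next I would upgrade the $\limsup$-condition $\kcl{\omega}<h-\varepsilon$ to a genuinely uniform one. For $N\geq 1$ put
\begin{equation*}
B_N:=\{\,\omega\in\prX:\ \kc{\omega|_{F_n}}{F_n}<(h-\varepsilon)\cntm{F_n}\ \text{ for all }n>N\,\}.
\end{equation*}
The sets $B_N$ increase with $N$, and their union contains $L_{h-\varepsilon}$ (if $\kca{\omega|_{F_n}}{F_n}<h-\varepsilon$ eventually, the displayed inequality holds for all large $n$), so $\mu(B_N)\to 1$; fix $N$ with $\mu(B_N)>\tfrac12$.

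Now the counting step. For each $n$ set $W_n:=\{\,w\in\calF_{\Lambda}(F_n):\kc{w}{F_n}<(h-\varepsilon)\cntm{F_n}\,\}$. Distinct words have distinct shortest $\aast$-descriptions, and there are fewer than $2^{(h-\varepsilon)\cntm{F_n}+1}$ binary strings of length below $(h-\varepsilon)\cntm{F_n}$, so $\cntm{W_n}<2^{(h-\varepsilon)\cntm{F_n}+1}$. The set $S_n:=\bigcup_{w\in W_n}\{\sigma\in\prX:\sigma|_{F_n}=w\}$ is a disjoint union of $\cntm{W_n}$ atoms of $\alpha_{\Lambda}^{F_n}$, and $B_N\subseteq S_n$ for every $n>N$. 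Applying Corollary~\ref{cor.smb}(a) with $\varepsilon/2$ in place of $\varepsilon$ and $\delta=\tfrac14$, there is $n_0$ such that for all $n\geq n_0$ every atom $A\in\alpha_{\Lambda}^{F_n}$ outside an exceptional family of total measure $<\tfrac14$ satisfies $\mu(A)\leq 2^{-\cntm{F_n}(h-\varepsilon/2)}$. Hence for $n\geq\max(N,n_0)$,
\begin{equation*}
\mu(B_N)\leq\mu(S_n)\leq\cntm{W_n}\,2^{-\cntm{F_n}(h-\varepsilon/2)}+\tfrac14<2^{\,1-\varepsilon\cntm{F_n}/2}+\tfrac14,
\end{equation*}
and since $\cntm{F_n}\to\infty$ the right-hand side eventually drops below $\tfrac12$, contradicting $\mu(B_N)>\tfrac12$.

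The step I expect to be the crux is the reduction from the $\limsup$-definition of $\kcl{\cdot}$ to a fixed positive-measure set $B_N$ on which the complexity bound holds \emph{simultaneously} for all large $n$; this is exactly what allows the single cardinality bound on $W_n$ to be pitted against the Shannon--McMillan--Breiman measure estimate, and it is where invariance, measurability and ergodicity (via the preceding Proposition) are genuinely used. Everything after that is the routine ``too few programs cannot carry too much measure'' computation.
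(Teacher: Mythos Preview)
Your proof is correct and follows essentially the same route as the paper's: contradiction, use ergodicity and the invariance/measurability Proposition to get full measure on $\{\kcl{\cdot}<h-\varepsilon\}$, pass to a set $B_N$ (the paper's $Q_l$) where the complexity bound holds for all large $n$, then pit the program-counting bound $\cntm{W_n}<2^{(h-\varepsilon)\cntm{F_n}+1}$ against the atom-measure bound from Corollary~\ref{cor.smb}. The only cosmetic difference is that you absorb the exceptional atoms as an additive $\tfrac14$ in the estimate for $\mu(S_n)$, whereas the paper splits $Q_l$ into $Q_l^A\sqcup Q_l^B$ and bounds $\mu(Q_l^A)$ directly; the arithmetic is the same.
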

\begin{proof}
Suppose this is false, and let
\begin{equation*}
R:=\{ \omega: \kcl{\omega} < h(\bfX)\}
\end{equation*}
be the measurable set of words whose complexity is strictly smaller
than the entropy $h(\bfX)$. By the assumption, $\mu(R)>0$. The measure
$\mu$ is ergodic and the set $R$ is invariant, hence $\mu(R) = 1$. For
every $i \geq 1$ let
\begin{equation*}
R_i:=\lbrace \omega: \kcl{\omega} < h(\bfX) - \frac 1 i \rbrace,
\end{equation*}
then $R = \bigcup\limits_{i \geq 1} R_i$ and the sets $R_i$ are
measurable and invariant for all $i$. It follows that there exists an
index $i_0$ s.t. $\mu(R_{i_0}) = 1$. For every $l \geq 1$ define the set
\begin{equation*}
Q_{l}:=\{ \omega: \kcc{\aast}{\omega|_{F_i} \circ \imath_{F_i}^{-1}} < \left(h(\bfX) - \frac 1 {i_0}\right) \cntm{F_i} \text{ for all } i \geq l \},
\end{equation*}
then $Q_{l}$ is a measurable set for every $l \geq 1$ and $R_{i_0} =
\bigcup\limits_{l \geq 1} Q_{l}$. Let $1> \delta > 0$ be fixed. The sequence of sets $(Q_{l})_{l
  \geq 1}$ is monotone increasing, hence there is $l_0$ such that for all $l \geq l_0$ we have $\mu(Q_{l})> 1 - \delta$.

Let $\varepsilon<\min(\frac 1 {i_0}, 1 - \delta)$ be positive. Let $n_0:=n_0(\varepsilon) \geq l_0$ s.t. for all $n \geq n_0$ we have the decomposition $\prX = A_n \sqcup B_n$, where $\mu(B_n)<\varepsilon$ and for all $\omega \in A_n$ the inequality
\begin{equation}
\label{eq.smbest}
2^{-\cntm{F_n}(h(\bfX) + \varepsilon)} \leq \mu(\alpha_{\Lambda}^{F_n}(\omega)) \leq 2^{-\cntm{F_n}(h(\bfX) - \varepsilon)}
\end{equation}
holds. Such $n_0$ exists due to Corollary \ref{cor.smb}. For every $l \geq n_0$, we partition the sets $Q_{l}$ in the following way:
\begin{align*}
Q_{l}^A &:=Q_{l}\cap A_l; \\
Q_{l}^B &:=Q_{l}\cap B_l.
\end{align*}
It is clear that for every $l \geq n_0$
\begin{align*}
\mu(Q_{l}^B) &<\varepsilon; \\
 \mu(Q_{l}^A) \geq 1 - &\delta - \varepsilon>0.
\end{align*}
By the definition of the set $Q_{l}^A$, for all $l \geq n_0$ and all $\omega \in Q_{l}^A$ we
have 
\begin{equation*}
\kcc{\aast}{\omega|_{F_l} \circ \imath_{F_l}^{-1}} \leq \cntm{F_l}
(h(\bfX)-\frac 1 {i_0}).
\end{equation*}
This allows, for every $l \geq n_0$, to estimate the cardinality of the set of all restrictions of words in $Q_{l}^A$ to $F_l$ as
\begin{equation*}
\left|\{ \omega|_{F_l}: \omega \in  Q_{l}^A \} \right| \leq 2^{|F_l| (h(\bfX)-\frac 1 {i_0})+1},
\end{equation*}
which can be seen by counting all binary programs of length at most $|F_l| (h(\bfX)-\frac 1 {i_0})$. Combining this with the Equation \ref{eq.smbest}, we deduce that
\begin{equation*}
\mu(Q_{l}^A) \leq 2^{|F_l| (h(\bfX)-\frac 1 {i_0})+1} \cdot 2^{-|F_l|(h(\bfX) - \varepsilon)} \leq 2^{|F_l|(\varepsilon - \frac 1 {i_0})+1}.
\end{equation*}
This implies that $\mu(Q_{l}^A) \to 0$ as $l \to \infty$, since $|F_l|
\to \infty$ and $\varepsilon - \frac 1 {i_0}<0$. This contradicts to the estimate 
\begin{equation*}
\mu(Q_{l}^A) \geq 1-\delta-\varepsilon
\end{equation*}
for all $l \geq n_0$ above.
\end{proof}

\subsection{Part B}
\label{ss.partb}
In this part of the proof we shall derive the other inequality, which is technically more difficult to prove. We begin with a preliminary lemma.
\label{ss.partb}

\begin{lemma}
\label{lem.brudno}
Let $\bfX=(\prX,\mu,\Gamma)$ be an ergodic measure-preserving system,
where the discrete group $\Gamma$ admits a regular symmetric F{\o}lner
monotiling $([F_n,\calZ_n])_{n \geq 1}$. Let $(\beta_k)_{k \geq 1}$ be
a sequence of finite partitions of $\prX$, where $\beta_k = \{
B_1^k,B_2^k,\dots,B_{M_k}^k\}$ for all $k \geq 1$. For all $k
\geq 1$, $h \in \Gamma$, $m \in \{ 1,2,\dots,M_k\}$ let
\begin{equation}
\pi_{n,m}^{k, h}(\omega):=\avg{g \in F_n \cap \calZ_k} \indi{B_m^k}((gh) \cdot \omega)
\end{equation}
and
\begin{equation}
\tilde \pi_{n,m}^{k, h}(\omega):=\avg{g \in \kintl{F_k}{F_n}  \cap \kintr{F_k^{-1}}{F_n} \cap \calZ_k} \indi{B_m^k}((gh) \cdot \omega).
\end{equation}
Then the following assertions hold:
\begin{aufzi}
\item For $\mu$-a.e. $\omega \in \prX$ the limit
\begin{equation*}
\pi_{m}^{k,h}(\omega):=\lim\limits_{n \to \infty} \pi_{n,m}^{k,h}(\omega) = \lim\limits_{n \to \infty} \tilde \pi_{n,m}^{k,h}(\omega)
\end{equation*}
exists for all $k \geq 1$, $m \in \{1,2,\dots,M_k \}$ and $h \in \Gamma$.

\item For $\mu$-a.e. $\omega \in \prX$ and all $k \geq 1$ there exists $h := h_k(\omega) \in F_k^{-1}$ such that
\begin{equation*}
-\sum\limits_{m=1}^{M_k} \pi_{m}^{k,h}(\omega) \log \pi_{m}^{k,h}(\omega) \leq h_{\mu}(\beta_k).
\end{equation*}
\end{aufzi}

\end{lemma}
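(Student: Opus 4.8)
The plan is to obtain part (i) directly from Theorem~\ref{thm.wghtd}, and part (ii) from a concavity estimate whose input is an exact identity writing $\mu(B_m^k)$ as the uniform average of the limits $\pi_m^{k,h}(\omega)$ over $h\in F_k^{-1}$. For (i): fix $k,m,h$. Since the action is a left action, $(gh)\cdot\omega = g\cdot(h\cdot\omega)$, so with $\eta := h\cdot\omega$ we have $\pi_{n,m}^{k,h}(\omega) = \avg{g\in F_n\cap\calZ_k}\indi{B_m^k}(g\cdot\eta)$ and $\tilde\pi_{n,m}^{k,h}(\omega) = \avg{g\in\kintl{F_k}{F_n}\cap\kintr{F_k^{-1}}{F_n}\cap\calZ_k}\indi{B_m^k}(g\cdot\eta)$. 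Applying Theorem~\ref{thm.wghtd} to $\bfX$, the function $\indi{B_m^k}\in\Ell{\infty}{(\prX)}$ and the index $k$, both averages converge to a common value for $\mu$-a.e.\ $\eta$; as $\omega\mapsto h\cdot\omega$ is measure preserving, this holds for $\mu$-a.e.\ $\omega$. Since $\Gamma$ and each $\{1,\dots,M_k\}$ are countable, intersecting these full-measure sets over all triples $(k,m,h)$ yields a single full-measure set on which $\pi_m^{k,h}(\omega)$ is well defined and equals both limits for all $k,m,h$.

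The heart of (ii) is the claim that for $\mu$-a.e.\ $\omega$ and all $k,m$,
\[
\mu(B_m^k) \;=\; \frac{1}{\cntm{F_k}}\sum_{h\in F_k^{-1}}\pi_m^{k,h}(\omega).\qquad(\star)
\]
Since the monotiling is symmetric, $([\calZ_k,F_k^{-1}])$ is a right monotiling, so $\Gamma=\bigsqcup_{z\in\calZ_k}zF_k^{-1}$. Cutting $F_n$ along this tiling and keeping only the tiles $zF_k^{-1}$ entirely contained in $F_n$ — i.e.\ exactly $z\in\kintr{F_k^{-1}}{F_n}\cap\calZ_k$, using $z\in\kintr{F_k^{-1}}{A}\Leftrightarrow zF_k^{-1}\subseteq A$ from the proof of Proposition~\ref{prop.fmonot} — gives
\[
\sum_{g\in F_n}\indi{B_m^k}(g\cdot\omega) \;=\; \sum_{z\in\kintr{F_k^{-1}}{F_n}\cap\calZ_k}\ \sum_{h\in F_k^{-1}}\indi{B_m^k}\big((zh)\cdot\omega\big)\;+\;E_n,
\]
with $0\le E_n\le \cntm{F_n}-\cntm{F_k}\cdot\cntm{\kintr{F_k^{-1}}{F_n}\cap\calZ_k}$. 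By Proposition~\ref{prop.fmonot}, squeezing $\kintl{F_k}{F_n}\cap\kintr{F_k^{-1}}{F_n}\cap\calZ_k\subseteq\kintr{F_k^{-1}}{F_n}\cap\calZ_k\subseteq F_n\cap\calZ_k$, we get $\cntm{\kintr{F_k^{-1}}{F_n}\cap\calZ_k}/\cntm{F_n}\to 1/\cntm{F_k}$, hence $E_n/\cntm{F_n}\to 0$. Dividing by $\cntm{F_n}$ and regrouping the double sum as $\sum_{h\in F_k^{-1}}$ of an average over $\kintr{F_k^{-1}}{F_n}\cap\calZ_k$ — which, again by squeezing between the averages over $F_n\cap\calZ_k$ and over $\kintl{F_k}{F_n}\cap\kintr{F_k^{-1}}{F_n}\cap\calZ_k$ (both $\to\pi_m^{k,h}(\omega)$ by Theorem~\ref{thm.wghtd}), also tends to $\pi_m^{k,h}(\omega)$ — yields $\avg{g\in F_n}\indi{B_m^k}(g\cdot\omega)\to\tfrac{1}{\cntm{F_k}}\sum_{h\in F_k^{-1}}\pi_m^{k,h}(\omega)$. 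On the other hand, since $(F_n)_{n\ge 1}$ is tempered and $\bfX$ is ergodic, the pointwise ergodic theorem of Lindenstrauss~\cite{lindenstrauss2001} gives $\avg{g\in F_n}\indi{B_m^k}(g\cdot\omega)\to\mu(B_m^k)$ for $\mu$-a.e.\ $\omega$; a final countable intersection over $k,m$ proves $(\star)$.

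Granting $(\star)$, fix such an $\omega$ and a $k$. Each vector $p^{(h)}:=(\pi_1^{k,h}(\omega),\dots,\pi_{M_k}^{k,h}(\omega))$, $h\in F_k^{-1}$, lies in the probability simplex (nonnegative entries summing to $1$, since $\sum_m\indi{B_m^k}\equiv 1$), and $(\star)$ says $(\mu(B_1^k),\dots,\mu(B_{M_k}^k))=\tfrac{1}{\cntm{F_k}}\sum_{h\in F_k^{-1}}p^{(h)}$. The Shannon entropy $p\mapsto-\sum_m p_m\log p_m$ is concave, so Jensen's inequality makes its value at this average — namely $h_{\mu}(\beta_k)$ — at least the average of its values at the $p^{(h)}$, whence $\min_{h\in F_k^{-1}}\big(-\sum_m\pi_m^{k,h}(\omega)\log\pi_m^{k,h}(\omega)\big)\le h_{\mu}(\beta_k)$; taking $h=h_k(\omega)$ to be a minimiser gives (ii). The one genuinely technical point is $(\star)$: partitioning $F_n$ into $F_k^{-1}$-tiles centred in $\calZ_k$, controlling the boundary term $E_n$ via the two-sided F{\o}lner property, and reconciling the three averaging domains through the density estimates of Proposition~\ref{prop.fmonot}. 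Everything else reduces to bookkeeping and the one-line convexity argument.
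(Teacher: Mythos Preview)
Your proof is correct and follows essentially the same route as the paper: part (i) comes straight from Theorem~\ref{thm.wghtd} and countability, and part (ii) from the identity $(\star)$ combined with concavity of the Shannon entropy. The only cosmetic difference is that the paper tiles using the centre set $\kintl{F_k}{F_n}\cap\kintr{F_k^{-1}}{F_n}\cap\calZ_k$ directly (so that the $\tilde\pi$'s appear at once), whereas you tile with the larger set $\kintr{F_k^{-1}}{F_n}\cap\calZ_k$ and then squeeze; both arrive at $(\star)$ by the same density estimates of Proposition~\ref{prop.fmonot} and the Lindenstrauss ergodic theorem.
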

\begin{proof}
The first assertion follows from the definition of a regular F{\o}lner
monotiling, Theorem \ref{thm.wghtd} and countability of $\Gamma$. 

For the second assertion, observe that for $\mu$-a.e. $\omega$, all $k
\geq 1$ and all $m
\in \{1,2,\dots,M_k \}$ 
\begin{equation*}
\frac{1}{\cntm{F_k}} \sum\limits_{h \in F_k^{-1}} \pi_{m}^{k,h}(\omega) = \lim\limits_{n \to \infty} \avg{g \in F_n} \indi{B_m^k}(g \cdot \omega),
\end{equation*}
since,  for every $k \geq 1$, $[\calZ_k, F_k^{-1}]$ is a right
monotiling, 
\begin{equation*}
(\kintl{F_k}{F_n}
\cap \kintr{F_k^{-1}}{F_n} \cap \calZ_k) F_k^{-1} \subseteq F_n
\end{equation*}
for all $n \geq 1$ and 
\begin{equation*}
\frac{\cntm{(\kintl{F_k}{F_n}
\cap \kintr{F_k^{-1}}{F_n} \cap \calZ_k) F_k^{-1}}}{\cntm{F_n}} \to 1
\end{equation*}
as $n \to \infty$.

Using ergodicity of $\bfX$, we deduce that for $\mu$-a.e. $\omega$,
all $k \geq 1$ and all $m
\in \{1,2,\dots,M_k \}$ 
\begin{equation*}
\frac{1}{\cntm{F_k}} \sum\limits_{h \in F_k^{-1}}
\pi_{m}^{k,h}(\omega) = \lim\limits_{n \to \infty} \avg{g \in F_n} \indi{B_m^k}(g \cdot \omega) = \mu(B_m^k),
\end{equation*}
and the second assertion follows by the concavity of the entropy.
\end{proof}

We are now ready to prove the converse inequality. The proof is based on essentially the same idea of `frequency encoding', but the technical details differ quite a bit.
\begin{thm}
\label{thm.brudnoleq}
Let $(\Gamma,\imath)$ be a computable group with a fixed computable regular symmetric F{\o}lner monotiling  $([F_n, \calZ_n])_{n \geq 1}$. Let $(\prX,\Gamma)$ be a subshift on $\Gamma$, $\mu \in \Probm_{\Gamma}(\prX)$ be an ergodic measure and $\bfX=(\prX,\mu,\Gamma)$ be the associated measure-preserving system. Then $\kcl{\omega} \leq h(\bfX)$ for $\mu$-a.e. $\omega$.
\end{thm}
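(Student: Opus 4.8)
The plan is to reproduce Brudno's `frequency encoding', now cutting $F_n$ into translates of $F_k$ taken from a suitably shifted copy of the monotiling. Apply Lemma~\ref{lem.brudno} to the sequence $\beta_k:=\alpha_\Lambda^{F_k}$: its atoms are the admissible $\Lambda$-patterns on $F_k$, so $M_k\le\cntm{\Lambda}^{\cntm{F_k}}$ and $h_\mu(\beta_k)=h_\mu(\alpha_\Lambda^{F_k})$. This produces a full-measure set $\prX_0\subseteq\prX$ such that for every $\omega\in\prX_0$ and every $k$ the limits $\pi_m^{k,h}(\omega)$ exist for all $m$ and all $h\in\Gamma$, and there is $h_k(\omega)\in F_k^{-1}$ with $-\sum_m\pi_m^{k,h_k(\omega)}(\omega)\log\pi_m^{k,h_k(\omega)}(\omega)\le h_\mu(\alpha_\Lambda^{F_k})$. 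I fix $\omega\in\prX_0$ and $k$ and will show $\kcl{\omega}\le h_\mu(\alpha_\Lambda^{F_k})/\cntm{F_k}$; the theorem then follows on letting $k\to\infty$, since $h_\mu(\alpha_\Lambda^{F_n})/\cntm{F_n}\to h_\mu(\alpha_\Lambda,\Gamma)=h(\bfX)$.

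Write $h:=h_k(\omega)$. Right-translating the tiling $\{F_kz:z\in\calZ_k\}$ by $h$ shows $[F_k,\calZ_kh]$ is again a left monotiling of $\Gamma$, and $\calZ_kh$ is a computable subset (right multiplication by a fixed element is computable). Put $Z_n:=\{z\in\calZ_kh:F_kz\subseteq F_n\}=\kintl{F_k}{F_n}\cap\calZ_kh$ and $N_n:=\cntm{Z_n}$. The argument proving the first assertion of Proposition~\ref{prop.fmonot} uses only that $[F_k,\calZ_kh]$ tiles $\Gamma$, that $(F_n)$ is F{\o}lner, and that $\ue\in F_k$, so it yields $N_n\cntm{F_k}/\cntm{F_n}\to1$, whence the remainder $R_n:=F_n\setminus\bigsqcup_{z\in Z_n}F_kz$ satisfies $\cntm{R_n}/\cntm{F_n}\to0$. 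For an atom $P_m$ of $\alpha_\Lambda^{F_k}$ let $c_m^{(n)}$ be the number of $z\in Z_n$ with $\omega|_{F_kz}=P_m$. Writing $z=gh$ with $g\in\calZ_k$ and using $\omega|_{F_kgh}=P_m\Leftrightarrow(gh)\cdot\omega\in B_m^k$, the index set $\{g\in\calZ_k:gh\in Z_n\}=\calZ_k\cap\kintl{F_k}{F_n}h^{-1}$ differs from $F_n\cap\calZ_k$ by only $o(\cntm{F_n})$ elements (here the two-sidedness of $(F_n)$ is used), so Proposition~\ref{prop.fmonot} and Lemma~\ref{lem.brudno}(i) give $c_m^{(n)}/N_n=\pi_{n,m}^{k,h}(\omega)+o(1)\to\pi_m^{k,h}(\omega)$. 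Since the Shannon entropy is continuous on the finite simplex, the empirical entropy $-\sum_m(c_m^{(n)}/N_n)\log(c_m^{(n)}/N_n)$ is at most $h_\mu(\alpha_\Lambda^{F_k})+\varepsilon_n$ with $\varepsilon_n\to0$, by Lemma~\ref{lem.brudno}(ii).

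Next I build a decompressor $\adag=\adag_k$ whose programs are simple prefix-free encodings of: the index $n$; the index of $h$ inside the finite set $F_k^{-1}$; the vector $(c_m^{(n)})_m$ (finitely many entries, each $\le N_n$, hence $O(\log n)$ bits in total for fixed $k$); the word $\omega|_{R_n}$ written as a $\Lambda$-word ($\cntm{R_n}(\lfloor\log\cntm{\Lambda}\rfloor+1)$ bits); and an `arrangement index', a binary numeral in $\{1,\dots,\binom{N_n}{c_1^{(n)},c_2^{(n)},\dots}\}$ identifying which sequence of tile-patterns with these counts is $(\omega|_{F_kz})_{z\in Z_n}$, the tiles being listed in a canonical order. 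Given $n$, the decompressor computes $F_n$, then $\calZ_kh$, then enumerates $Z_n$ and computes $R_n$; from $(c_m^{(n)})_m$ it knows the multinomial coefficient, hence how many bits to read for the arrangement; it writes the decoded pattern on each tile via $\omega(yz):=P(y)$, $y\in F_k$, writes $\omega|_{R_n}$ on $R_n$, and outputs the word on $F_n$ re-indexed by $\imath_{F_n}$, failing on any length mismatch. Every step is effective precisely because $(\Gamma,\imath)$ and the monotiling are computable. Using $\binom{N}{c_1,\dots,c_r}\le 2^{N(-\sum_i (c_i/N)\log(c_i/N))}$, the program for $\omega|_{F_n}\circ\imath_{F_n}^{-1}$ just exhibited has length at most
\[
\cntm{R_n}(\lfloor\log\cntm{\Lambda}\rfloor+1)+N_n\bigl(h_\mu(\alpha_\Lambda^{F_k})+\varepsilon_n\bigr)+O(\log n)+O(1).
\]
Dividing by $\cntm{F_n}$ and using $\cntm{R_n}/\cntm{F_n}\to0$, $N_n/\cntm{F_n}\to1/\cntm{F_k}$, $\cntm{F_n}/\log n\to\infty$, $\varepsilon_n\to0$, and the fact that $\aast$ is not worse than $\adag_k$, I obtain $\kcl{\omega}=\limsup_n\kca{\omega|_{F_n}}{F_n}\le h_\mu(\alpha_\Lambda^{F_k})/\cntm{F_k}$, which is what was needed.

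The main obstacle is the convergence $c_m^{(n)}/N_n\to\pi_m^{k,h}(\omega)$: one has to match a purely combinatorial pattern-frequency among the tiles sitting inside $F_n$ with the weighted ergodic average $\pi_{n,m}^{k,h}$ of Lemma~\ref{lem.brudno}, which forces one to discard boundary tiles and to absorb the right translate by $h$ into a F{\o}lner error term — and it is exactly here that one sees why $h$ must depend on $\omega$ and be encoded (averaging the entropies over $h\in F_k^{-1}$ only guarantees some good shift, not the trivial one). The verification that $\adag_k$ is genuinely computable is where the computability hypotheses on the group and the monotiling enter, but it is routine.
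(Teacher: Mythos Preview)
Your argument is correct and follows the same frequency-encoding strategy as the paper, invoking Lemma~\ref{lem.brudno} for the partitions $\beta_k=\alpha_\Lambda^{F_k}$ and bounding the multinomial coefficient by the empirical entropy. The one tactical difference is in how the shift $h=h_k(\omega)$ is handled: the paper encodes the \emph{shifted word} $(h\cdot\omega)|_{F_n}$ against the unshifted centers $I_{k,n}=\kintl{F_k}{F_n}\cap\kintr{F_k^{-1}}{F_n}\cap\calZ_k$ and then invokes shift invariance (Theorem~\ref{thm.shiftinv}) to transfer the bound back to $\kcl{\omega}$, whereas you encode $\omega|_{F_n}$ directly against the \emph{shifted centers} $\calZ_k h$ and write $h$ into the program. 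Your route avoids the appeal to Theorem~\ref{thm.shiftinv} at the price of a small extra F{\o}lner approximation (matching $\calZ_k\cap\kintl{F_k}{F_n}h^{-1}$ with $F_n\cap\calZ_k$, which indeed uses the right-F{\o}lner property); the paper's route keeps the decompressor ignorant of $h$ and makes the frequency counts coincide exactly with $\tilde\pi_{n,m}^{k,h'}(\omega)$ so that no additional approximation beyond Lemma~\ref{lem.brudno} is needed. One minor slip: the frequency vector costs $O(\log\cntm{F_n})$ bits rather than $O(\log n)$ in general, but this is still $o(\cntm{F_n})$, so nothing is affected.
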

\begin{proof}
We will now describe a decompressor $\aex$ that will be used to encode restrictions of the words in $\prX$. The decompressor $\aex$ is defined on the domain of the programs of the form
\begin{equation}
\up:=\overline \us \del \overline \ut \del \overline \uf_1 \del \dots \overline \uf_L \ddel \overline \ur \del \uw \del \underline \uN.
\end{equation}
Here $\overline \us, \overline \ut,\overline \ur$ are doubling
encodings of some natural numbers $s,t,r$. Words $\overline \uf_1,
\dots, \overline \uf_L$, where we require that $L = (\card
\Lambda)^{\cntm{F_s}}$, are doubling encodings of nonnegative integers
$f_1,\dots,f_L$. The word $\uw$ encodes a $\Lambda$-word $\upsilon$ of
length $r$. The word $\underline \uN$ encodes\footnote{We stress that we use a binary encoding here and not a doubling encoding.} a natural number $N$. Observe that this interpretation is not ambiguous. Let $$\{ \widetilde \omega_1,\widetilde \omega_2,\dots, \widetilde \omega_L \}$$ be the list of all $\Lambda$-words of length $\cntm{F_s}$ ordered lexicographically.

The decompressor $\aex$ works as follows. From $s$ and $t$ compute the finite subsets \begin{equation*}
F_s, F_t, \kintl{F_s}{F_t} \cap \kintr{F_s^{-1}}{F_t}
\end{equation*}
of $\N$. Compute the finite set $$I_{s,t}:=\kintl{F_s}{F_t} \cap \kintr{F_s^{-1}}{F_t} \cap \calZ_s$$ of centers of monotiling $[F_s,\calZ_s]$. Next, for every $h \in I_{s,t}$ compute the tile $T_h:=F_s h \subseteq F_t$ centered at $h$. We compute the union $$\Delta_{s,t}:=\bigcup\limits_{h \in I_{s,t}} T_h \subseteq F_t$$ of all such tiles.

We will construct a $\Lambda$-word $\sigma$ on the set $F_t$, then $\widetilde \sigma:=\sigma \circ \imath_{F_t}^{-1}$ yields a word on $\{ 1,2,\dots,\cntm{F_t}\}$. The word $\sigma$ is computed as follows. First, we describe how to compute the restriction $\sigma|_{\Delta_{s,t}}$. For every $h \in I_{s,t}$ the word $\sigma \circ \imath_{T_h}^{-1}$ is a word on $\{ 1,2,\dots, \cntm{F_s}\}$, hence it coincides with one of the words
\begin{equation*}
\widetilde \omega_1, \widetilde \omega_2, \dots, \widetilde \omega_L
\end{equation*}
introduced above. We require that the word $\widetilde \omega_i$
occurs exactly $f_i$ times for every $i\in \{ 1,\dots,L \}$. This
amounts to saying that the word $\sigma|_{\Delta_{s,t}}$ has the \emph{collection of frequencies} $f_1,f_2,\dots,f_L$. Of course, this does not determine $\sigma|_{\Delta_{s,t}}$ uniquely, but only up to a certain permutation. Let $\calF_{\Lambda,\up}$ be the set of all $\Lambda$-words on $\Delta_{s,t}$ having collection of frequencies $f_1,f_2,\dots,f_L$. If $\sum\limits_{j=1}^L f_j \neq \cntm{I_{s,t}}$ the algorithm terminates and yields no output, otherwise $\calF_{\Lambda,\up}$ is nonempty. The set $\calF_{\Lambda,\up}$ is ordered lexicographically (recall that $\Delta_{s,t}$ is a subset of $\N$). It is clear that
\begin{equation}
\card \calF_{\Lambda,\up} = \frac{\cntm{I_{s,t}}!}{f_1! f_2! \dots f_L!}
\end{equation}
Thus to encode $\sigma|_{\Delta_{s,t}}$ it suffices to give the index
$N_{\calF_{\Lambda,\up}}(\sigma|_{\Delta_{s,t}})$ of
$\sigma|_{\Delta_{s,t}}$ in the set $\calF_{\Lambda,\up}$. We require
that $N_{\calF_{\Lambda,\up}}(\sigma|_{\Delta_{s,t}}) = N$, and this
together with the collection of frequencies $f_1,f_2,\dots,f_L$
determines the word $\sigma|_{\Delta_{s,t}}$ uniquely. If $N > \card \calF_{\Lambda,\up}$, the algorithm terminates without producing output.

Now we compute the restriction $\sigma_{F_t \setminus \Delta_{s,t}}$. Since $F_t \setminus \Delta_{s,t}$ is a finite subset of $\N$, we can simply list the values of $\sigma$ in the order they appear on $F_t \setminus \Delta_{s,t}$. That is, we require that
\begin{equation*}
\sigma|_{F_t \setminus \Delta_{s,t}} \circ \imath_{F_t \setminus \Delta_{s,t}}^{-1} = \upsilon,
\end{equation*}
and the algorithm terminates without producing output if $r \neq \card (F_t \setminus \Delta_{s,t})$.

For all $k \geq 1$, let $$\{ \widetilde \omega_1^k,\widetilde
\omega_2^k,\dots, \widetilde \omega_{M_k}^k \}$$ be the list of all
$\Lambda$-words of length $\cntm{F_k}$ ordered lexicographically. Here
$M_k = {(\card \Lambda)^{\cntm{F_k}}}$ for all $k$. For all $k \geq
1$ and $i\in \{ 1,\dots,M_k \}$ define the
cylinder sets 
\begin{equation*}
B_i^k:=\{ \omega \in \prX: \omega|_{F_k} \circ
\imath_{F_k}^{-1} = \widetilde \omega_i^k\},
\end{equation*}
and let $\beta_k:=\{ B_1^k,B_2^k, \dots, B_{M_k}^k\}$ be the corresponding
partition of $\prX$ into cylinder sets for every $k$. We apply Lemma
\ref{lem.brudno} to the system $\bfX=(\prX,\mu,\Gamma)$ and the sequence of partitions
$(\beta_k)_{k \geq 1}$. This yields a full measure subset $\prX_0
\subseteq \prX$ such that for all $\omega \in \prX_0$ and all $k \geq
1$ there is an element $h':=h_k(\omega) \in F_k^{-1}$ s.t.
\begin{equation}
\label{eq.brudnolem}
-\sum\limits_{m=1}^M \pi_{m}^{k,h'}(\omega) \log \pi_{m}^{k,h'}(\omega) \leq h_{\mu}(\beta_k).
\end{equation}
Let $\omega \in \prX_0$, $k \geq 1$ be arbitrary fixed and $h' :=
h_k(\omega) \in F_k^{-1}$ be the group element given by Lemma \ref{lem.brudno}. Because of the
shift-invariance of Kolmogorov complexity we have $\kcl{h' \cdot
  \omega} = \kcl{\omega}$. We will show that
\begin{equation*}
\kcl{\omega} = \kcl{h' \cdot \omega} \leq \frac{h_{\mu}(\beta_k)}{\cntm{F_k}}, 
\end{equation*}
then, since $h_{\mu}(\beta_k) = h_{\mu}(\alpha_{\Lambda}^{F_k})$ for
all $k \geq 1$, taking the limit as $k \to \infty$ completes the proof
of the theorem.

For the moment let $n$ be arbitrary fixed. Observe that for all $i \in \{
1,\dots,M_k \}$
\begin{equation*}
\widetilde \pi_{n,i}^{k,h'}(\omega)=\frac{1}{\cntm{I_{k,n}}}\sum\limits_{h \in I_{k,n}} \indi{B_i^k}(h \cdot (h'  \cdot \omega)),
\end{equation*}
i.e. $\cntm{I_{k,n}} \widetilde \pi_{n,i}^{k,h'}(\omega)$ equals the number of times the translates of the word $\widetilde \omega_i^k$ along the set $I_{k,n}$ appear in the word $(h' \cdot \omega)|_{\Delta_{k,n}}$. It follows by the definition of the algorithm $\aex$ that the following program describes the word $(h' \cdot \omega)|_{F_n} \circ \imath_{F_n}^{-1}$:
\begin{equation*}
\up:=\overline \uk \del \overline \un \del \overline \uf_1 \del \dots \overline \uf_{M_k} \ddel \overline \ur \del \uw \del \underline \uN
\end{equation*}
Here $\overline \uf_i$ is the doubling encoding of $\cntm{I_{k,n}}
\widetilde \pi_{n,i}^{k,h'}(\omega)$ for all $i\in \{ 1,\dots,M_k
\}$. The binary word $\uw$ encodes the word $\upsilon = (h' \cdot
\omega)|_{F_n \setminus \Delta_{k,n}} \circ \imath_{F_n \setminus
  \Delta_{k,n}}^{-1}$ of length $r$ and $\underline \uN$ encodes the index of $(h' \cdot \omega)|_{\Delta_{k,n}}$ in the set $\calF_{\Lambda,\up}$.

We will now estimate the length $l(\up)$ of the program $\up$
above. We begin by estimating the length of the word $\overline \uf_1
\del \dots \overline \uf_{M_k}$. Observe that
\begin{equation*}
f_j \leq \cntm{I_{k,n}} \leq \frac{\cntm{F_n}}{\cntm{F_k}} \ \text{ for every } j=1,\dots,M_k,
\end{equation*}
hence $l(\overline \uf_1 \del \dots \overline
\uf_{M_k})=o(\cntm{F_n})$. Next, we estimate the length
of the word $\uw$. Since $(F_n)_{n \geq 1}$ is a F{\o}lner sequence,
we conclude that $l(\uw)=o(\cntm{F_n})$. It is clear
that $l(\overline \un) \leq 2 \lfloor \log n \rfloor + 2=
o(\cntm{F_n})$, since $\frac{\cntm{F_n}}{\log n} \to \infty$. Finally,
we estimate $l(\underline \uN)$. Of course, $l(\underline \uN) \leq
\log \frac{\cntm{I_{k,n}}!}{f_1! f_2! \dots f_{M_k}!} + 1$. We use Stirling's approximation to deduce that
\begin{equation*}
\log \frac{\cntm{I_{k,n}}!}{f_1! f_2! \dots f_{M_k}!} \leq -\sum\limits_{j=1}^{M_k} f_j \log \frac{f_j}{\cntm{I_{k,n}}}+o(\cntm{F_n}).
\end{equation*}
Hence we can estimate the length of $\up$ by
\begin{align*}
l(\up) \leq o(\cntm{F_n}) -\sum\limits_{j=1}^{M_k} f_j \log \frac{f_j}{\cntm{I_{k,n}}}.
\end{align*}
Since $f_i = \cntm{I_{k,n}} \widetilde \pi_{n,i}^{k,h'}(\omega)$ for every $i=1,\dots,M_k$, we deduce that
\begin{align*}
l(\up) \leq o(\cntm{F_n}) -\cntm{I_{k,n}} \sum\limits_{j=1}^{M_k} \widetilde \pi_{n,j}^{k,h'}(\omega) \log \widetilde \pi_{n,j}^{k,h'}(\omega).
\end{align*}
Dividing both sides by $\cntm{F_n}$ and taking the limit as $n \to \infty$, we use Lemma \ref{lem.brudno} and Proposition \ref{prop.fmonot} to conclude that
\begin{equation*}
\limsup\limits_{n \to \infty} \frac{\kcc{\aex}{(h' \cdot \omega)|_{F_n} \circ \imath_{F_n}^{-1}}}{\cntm{F_n}} \leq \frac{h_{\mu}(\beta_k)}{\cntm{F_k}}.
\end{equation*}
By the optimality of $\aast$ we deduce that
\begin{equation*}
\kcl{h' \cdot \omega} = \limsup\limits_{n \to \infty} \frac{\kcc{\aast}{(h' \cdot \omega)|_{F_n} \circ \imath_{F_n}^{-1}}}{\cntm{F_n}} \leq \frac{h_{\mu}(\beta_k)}{\cntm{F_k}}
\end{equation*}
and the proof is complete.
\end{proof}

\printbibliography[]
\end{document}